\newcounter{step}
\newcommand{\Proofstep}{\par\refstepcounter{step}Step~\thestep.\space\ignorespaces}
\theoremstyle{plain}
\newtheorem{theorem}{Theorem}[section]
\newtheorem*{theoremA}{Theorem~A}
\newtheorem*{theoremB}{Theorem~B}
\newtheorem*{corollaryC}{Corollary~C}
\newtheorem*{corollaryD}{Corollary~D}
\newtheorem{lemma}[theorem]{Lemma}
\newtheorem{proposition}[theorem]{Proposition}
\newtheorem{proposition-definition}[theorem]{Proposition-Definition}
\newtheorem{corollary}[theorem]{Corollary}
\newtheorem{claim}[theorem]{Claim}
\theoremstyle{definition}
\newtheorem{definition}[theorem]{Definition}
\theoremstyle{remark}
\newtheorem{remark}[theorem]{Remark}
\renewcommand{\labelenumi}{(\arabic{enumi})}
\newcommand{\ZZ}{\mathbb{Z}}
\newcommand{\QQ}{\mathbb{Q}}
\newcommand{\RR}{\mathbb{R}}
\newcommand{\CC}{\mathbb{C}}
\newcommand{\FF}{\mathbb{F}}
\newcommand{\KK}{\mathbb{K}}
\newcommand{\PP}{\mathbb{P}}
\newcommand{\OO}{\mathcal{O}}
 \newcommand{\esssup}{\mathop{\operator@font ess.sup}\displaylimits}
 \newcommand{\essinf}{\mathop{\operator@font ess.inf}\displaylimits}
 \def\BIG#1{%
  {\hbox{$\left#1\vbox to20.5\p@{}\right.\n@space$}}}
 \let\@@pmod\pmod 
 \DeclareRobustCommand{\pmod}{\@ifstar\@pmods\@@pmod} 
 \def\@pmods#1{\mkern4mu({\operator@font mod}\mkern 6mu#1)} 
\renewcommand{\div}{\mathop{\mathrm{div}}\nolimits}
\newcommand{\Image}{\mathop{\mathrm{Image}}\nolimits}
\renewcommand{\leq}{\leqslant}
\renewcommand{\geq}{\geqslant}
\DeclareMathOperator{\Sym}{Sym}
\DeclareMathOperator{\Spec}{Spec}
\DeclareMathOperator{\Proj}{Proj}
\DeclareMathOperator{\Supp}{Supp}
\DeclareMathOperator{\Rat}{Rat}
\DeclareMathOperator{\Div}{CDiv}
\DeclareMathOperator{\aTheta}{\widehat{\Theta}}
\DeclareMathOperator{\aDiv}{\widehat{CDiv}}
\DeclareMathOperator{\aCDiv}{\widehat{CDiv}}
\DeclareMathOperator{\aInt}{\widehat{Int}}
\DeclareMathOperator{\aDDiv}{\widehat{\mathbb{CD}iv}}
\DeclareMathOperator{\Conv}{Conv}
\DeclareMathOperator{\adeg}{\widehat{deg}}
\DeclareMathOperator{\ord}{ord}
\DeclareMathOperator{\vol}{vol}
\DeclareMathOperator{\avol}{\widehat{vol}}
\DeclareMathOperator{\rk}{rk}
\newcommand{\ah}{\widehat{\ell}^{\ast}}
\newcommand{\ahss}{\widehat{\ell}^{\rm ss}}
\newcommand{\quot}{{\rm quot}}
\newcommand{\sbullet}{{\scriptscriptstyle\bullet}}
\def\avolq#1{\avol_{#1}}
\title{Differentiability of the arithmetic volume function along the base conditions}
\author{Hideaki Ikoma\footnote
{
Department of Education, Shitennoji University,
Habikino Osaka 583-8501 Japan.
E-mail: h-ikoma@shitennoji.ac.jp.\
\textit{2020 Mathematics Subject Classification}. 14G40; 11G50, 37P30.
\textit{Key words}. Arakelov theory, adelic divisors, arithmetic volumes, differentiability.
}
}
\date{\today}
\begin{document}
%\selectlanguage{english}

\maketitle

\begin{abstract}
In this paper, we show that the arithmetic volume function defined on the space of pairs of adelic $\RR$-Cartier divisors and base conditions is differentiable at a big pair, and that its derivative is given by an arithmetic restricted positive intersection number defined for the pair.
\end{abstract}

\tableofcontents

%%%Introduction
\section{Introduction}\label{sec:Intro}

Let $K$ be a number field, let $M_K^{\rm f}$ denote the set of finite places of $K$, and set $M_K\coloneqq M_K^{\rm f}\cup\{\infty\}$.
Let $X$ be a normal, projective, and geometrically connected variety defined over $K$, and let $\Rat(X)$ denote the field of rational functions on $X$.
An \emph{adelic $\RR$-Cartier divisor} $\overline{D}$ on $X$ is a couple $\left(D,\pmb{g}^{\overline{D}}\right)$ consisting of an $\RR$-Cartier divisor $D$ on $X$ and a family of $D$-Green functions
\[
\pmb{g}^{\overline{D}}=\sum_{v\in M_K}g_v^{\overline{D}}[v]
\]
satisfying the \emph{adelic condition} (see \cite{MoriwakiAdelic} for detail).
To each pair $\left(\overline{D};E\right)$ of an adelic $\RR$-Cartier divisor $\overline{D}$ on $X$ and an $\RR$-Cartier divisor $E$ on $X$, we assign the finite set of all \emph{strictly small} sections of $\overline{D}$ vanishing along the positive part of $E$: namely
\[
 \widehat{\Gamma}^{\rm ss}\left(\overline{D};E\right)\coloneqq\left\{\phi\in\Rat(X)^{\times}\,\colon\,\overline{D}+\widehat{(\phi)}>0\,\text{and}\, D+(\phi)\geq E\right\}\cup\{0\}
\]
(see Notation and terminology~\ref{NC:Vol}).
The \emph{arithmetic volume} of the pair $\left(\overline{D};E\right)$ is then defined as
\[
 \avol\left(\overline{D};E\right)\coloneqq\limsup_{\substack{m\in\ZZ_{\geq 1}, \\ m\to\infty}}\frac{\log\left(\#\widehat{\Gamma}^{\rm ss}\left(m\overline{D};mE\right)\right)}{m^{\dim X+1}/(\dim X+1)!}
\]
(see \cite{IkomaDiff1} for detail).

As is well-known, differentiability of the arithmetic volume function has essential importance in Arakelov geometry and many applications in problems on rational points (see for example \cite{SilvermanADS}).
Due to Yuan's arithmetic Siu inequality \cite{Yuan07}, we know that the arithmetic volume function above is G\^ateaux differentiable along directions defined by adelic $\RR$-Cartier divisors, and that the derivatives are given by \emph{arithmetic positive intersection numbers} (see \cite{Chen11,IkomaCon,IkomaDiff1}).
Another analytic approach to differentiability is proposed by Berman and Boucksom \cite{Berman_Boucksom}.
In this paper, we study G\^ateaux differentiability of the arithmetic volume function along directions defined by $\RR$-Cartier divisors, and show that the derivatives are given by \emph{arithmetic restricted positive intersection numbers}, which gives an answer to a question raised by Moriwaki and Chen.

Let $Y$ be a prime Cartier divisor on $X$.
A pair $\left(\overline{D};E\right)$ is said to be \emph{$Y$-big} if there is a \emph{weakly ample} (or \emph{w-ample} for short) adelic $\RR$-Cartier divisor $\overline{A}$ on $X$ (see Notation and terminology~\ref{NC:positive} for definition of w-ampleness) such that the pair $\left(\overline{D}-\overline{A};E\right)$ is strictly effective and the support of $D-A-E$ does not contain $Y$ as a component.
In particular, the $Y$-bigness of $\left(\overline{D};E\right)$ implies that $\ord_Y(E)\geq 0$.
The main purpose of this paper is to establish the following:
\medskip

\begin{theoremA}[see section~\ref{subsec:Yuan}]
Let $X$ be a normal projective variety over a number field, let $Y$ be a prime Cartier divisor on $X$, and let $\left(\overline{D};E\right)$ be a $Y$-big pair on $X$.
If $\ord_Y(E)>0$, then the function $r\mapsto\avol\left(\overline{D};E+rY\right)$ is two-sided differentiable at $r=0$ and
\[
 \lim_{r\to 0}\frac{\avol\left(\overline{D};E\right)-\avol\left(\overline{D};E+rY\right)}{r}=(\dim X+1)\left.\left\langle\left(\overline{D};E\right)^{\cdot\dim X}\right\rangle\right|_Y.
\]
\end{theoremA}
\medskip

The right-hand side of Theorem~A denotes the \emph{arithmetic restricted positive intersection number of $\left(\overline{D};E\right)$ along $Y$}, which we define as follows:
We refer to a couple $\left(\pi\colon X'\to X,\overline{M}\right)$ consisting of a modification $\pi\colon X'\to X$ and a nef and $\pi_*^{-1}(Y)$-big adelic $\RR$-Cartier divisor $\overline{M}$ on $X'$ such that $\left(\pi^*\overline{D}-\overline{M};E\right)$ is $\pi_*^{-1}(Y)$-pseudo-effective as a \emph{$Y$-approximation} of $\left(\overline{D};E\right)$, and set
\[
\left.\left\langle\left(\overline{D};E\right)^{\cdot\dim X}\right\rangle\right|_Y\coloneqq\sup_{\left(\pi,\overline{M}\right)}\left\{\adeg\left(\left(\left.\overline{M}\right|_{\pi_*^{-1}(Y)}\right)^{\cdot\dim X}\right)\right\},
\]
where the supremum is taken over all $Y$-approximations $\left(\pi,\overline{M}\right)$ of $\left(\overline{D};E\right)$.
\medskip

Under our definition of local positivity of pairs, the cone of $Y$-big pairs admits the boundary defined as
\[
\left\{\left(\overline{D};E\right)\,\colon\,\text{$\displaystyle{\left(\overline{D};E\right)}$ is $Y$-big and $\ord_Y(E)=0$}\right\}.
\]
Hence continuity of the arithmetic restricted positive intersection numbers at the boundary does not directly follows from their concavity property.
We partially solve this problem as follows (see Notation and terminology~\ref{NC:InnProd}):
\medskip

\begin{theoremB}[see section~\ref{subsec:Cont_at_the_boundary}]
Let $X$ be a normal projective variety over a number field, let $Y$ be a prime Cartier divisor on $X$, and let $\overline{A}$ be a nef and $Y$-big $\RR$-Cartier divisor on $X$.
Let $\overline{\bm{D}}\coloneqq\left(\overline{D}_1,\dots,\overline{D}_m\right)$ be a family of adelic $\RR$-Cartier divisors on $X$, and let $\bm{E}\coloneqq\left(E_1,\dots,E_n\right)$ be a family of $\RR$-Cartier divisors on $X$ such that $\ord_Y(E_j)=0$ for every $j$.
Then
\[
 \lim_{\substack{\bm{t},\bm{u}\to 0 \\ r\downarrow 0}}\left.\left\langle\left(\overline{A}+\bm{t}\cdot\overline{\bm{D}};rY+\bm{u}\cdot\bm{E}\right)^{\cdot\dim X}\right\rangle\right|_Y 
=\left.\left\langle\overline{A}^{\cdot\dim X}\right\rangle\right|_Y.
\]
\end{theoremB}
\medskip

Immediate consequences of our theorems are Corollaries~C and D below.
\medskip

\begin{corollaryC}
Let $X$ be a normal projective variety over a number field, and let $Y$ be a prime Cartier divisor on $X$.
If $\left(\overline{D};rY\right)$ is $Y$-big, then
\[
\avol\left(\overline{D};rY\right)=\left\langle\left(\overline{D};rY\right)^{\cdot \dim X}\right\rangle\cdot \overline{D}-r\left.\left\langle\left(\overline{D};rY\right)^{\cdot \dim X}\right\rangle\right|_Y.
\]
\end{corollaryC}
\medskip

\begin{corollaryD}
Let $X$ be a normal, projective variety over a number field, let $Y$ be a prime Cartier divisor on $X$, and let $\overline{A}$ be a nef and $Y$-big adelic $\RR$-Cartier divisor on $X$.
\begin{enumerate}
\item The function $r\mapsto\avol\left(\overline{A};rY\right)$ is one-sided differentiable at $r=0$ and
\[
 \lim_{r\downarrow 0}\frac{\avol\left(\overline{A}\right)-\avol\left(\overline{A};rY\right)}{r}=(\dim X+1)\left.\left\langle\overline{A}^{\cdot\dim X}\right\rangle\right|_Y.
\]
\item Let $\pmb{g}^{\overline{Y}}$ be an adelic $Y$-Green function, and consider the pair $\left(Y,\pmb{g}^{\overline{Y}};Y\right)$.
Then
\begin{align*}
\lim_{r\downarrow 0}\frac{\avol\left(\overline{A}+r\overline{Y};rY\right)-\avol\left(\overline{A}\right)}{r} &=(\dim X+1)\left(\left\langle\overline{A}^{\cdot \dim X}\right\rangle\cdot \overline{Y}-\left.\left\langle\overline{A}^{\cdot \dim X}\right\rangle\right|_Y\right) \\
&=(\dim X+1)\int_X\pmb{g}^{\overline{Y}}\,\left\langle\overline{A}^{\cdot \dim X}\right\rangle.
\end{align*}
\end{enumerate}
\end{corollaryD}
\medskip

In his paper \cite{Yuan09}, Yuan uses vertical flags
\[
\mathscr{F}_{\geq 1}\colon \mathscr{Y}=\mathscr{F}_1\supset\mathscr{F}_2\supset\dots\supset\mathscr{F}_{\dim\mathscr{Y}+1}
\]
on an arithmetic variety $\mathscr{Y}$ to construct convex bodies in Euclidean spaces whose Euclidean volumes approximate the arithmetic volume of a Hermitian line bundle on $\mathscr{Y}$ (see section~\ref{subsec:YuansEst}).
Later, Moriwaki \cite{MoriwakiEst} applies Yuan's techniques to the study of arithmetic restricted volumes of Hermitian line bundles.
After that, Yuan \cite{Yuan12} further constructs an \emph{arithmetic Newton--Okounkov body} whose Euclidean volume exactly gives the arithmetic volume of a given Hermitian line bundle.

The strategy to prove Theorem~A is then as follows:
First, in section~\ref{subsec:YuansEst}, we apply Yuan's techniques to the case of pairs and establish the arithmetic Fujita approximations for arithmetic restricted volumes of pairs, which ensures the identities between the arithmetic restricted volumes and the arithmetic restricted positive intersection numbers (see Proposition~\ref{prop: arithmetic Fujita approximation}).
Next, after observing a basic result on concave functions in section~\ref{subsec: concave}, we use Moriwaki's method \cite{MoriwakiCont} to give upper bounds for the derivatives of the arithmetic volume function in section~\ref{subsec: upper bound}.
In section~\ref{subsec: estimation of NO bodies}, we consider flags
\[
\mathscr{F}_{\sbullet}\colon \mathscr{X}\supset\mathscr{Y}=\mathscr{F}_1\supset\mathscr{F}_2\supset\dots\supset\mathscr{F}_{\dim\mathscr{X}}
\]
on $\mathscr{X}$ and construct the \emph{approximate} arithmetic Newton--Okounkov bodies for w-ample adelic Cartier divisors.
Lastly, by using these convex bodies, we will show lower bounds for the derivatives of the arithmetic volume function (see section~\ref{subsec:Yuan}).

\section*{Notation and terminology}\label{subsec:notation_and_terminology}

\begin{enumerate}
\renewcommand{\labelenumi}{\arabic{enumi}.}
\item\label{NC:norm}
The floor (respectively, ceiling) function is defined as
\begin{align*}
&\lfloor\alpha\rfloor\coloneqq\max\{n\in\ZZ\,\colon\,n\leq\alpha\} \\
\text{(respectively, }\quad &\lceil\alpha\rceil\coloneqq\min\{n\in\ZZ\,\colon\, n\geq \alpha\}\quad\text{)}
\end{align*}
for $\alpha\in\RR$.
For any $\bm{r}\coloneqq (r_1,\dots,r_l)\in\RR^l$, we set
\[
\lvert\bm{r}\rvert\coloneqq\left(\lvert r_1\rvert,\dots,\lvert r_l\rvert\right)\qquad\text{and}\qquad \|\bm{r}\|\coloneqq \sum_{i=1}^l|r_i|.
\]

\item\label{NC:InnProd}
Let $R$ be a ring, and let $M$ be an $R$-module.
The $R$-submodule of $M$ generated by a subset $\Gamma\subset M$ is denoted by $\langle\Gamma\rangle_R$.
Let $l\in\ZZ_{\geq 1}$, let $\bm{r}\coloneqq (r_1,\dots,r_l)\in R^l$ and let $\bm{m}\coloneqq (m_1,\dots,m_l)\in M^l$.
We use the dot-product notation as
\[
\bm{r}\cdot\bm{m}\coloneqq \sum_{i=1}^lr_im_i.
\]

\item\label{NC:flag} 
Let $X$ be a reduced, irreducible, and Noetherian scheme of finite Krull dimension.
We denote the field of rational functions on $X$ by $\Rat(X)$.
A \emph{flag on $X$} is a sequence of reduced, irreducible, and closed subschemes of $X$,
\[
F_{\sbullet}\colon X=F_0\supset F_1\supset F_2\supset\dots\supset F_{\dim X},
\]
such that each $F_i$ has codimension $i$ in $X$, such that $F_{\dim X}$ consists of a closed point $\xi$ of $X$, and such that each $F_{i+1}$ is locally principal in $F_i$ around $\xi$.

We define the \emph{valuation map $\bm{w}_{F_{\sbullet}}\colon \Rat(X)^{\times}\to\ZZ^{\dim X}$ attached to a flag $F_{\sbullet}$} as follows (see \cite[section 1.1]{Lazarsfeld_Mustata08}):
For each $i=1,\dots,\dim X$, we choose a local equation $f_i$ defining $F_i$ in $F_{i-1}$ around $\xi$.
Given a $\phi\in\Rat(X)^{\times}$, we set $\phi_1\coloneqq\phi$, and set
\[
 \phi_{i+1}\coloneqq\left.\left(f_i^{-\ord_{F_i}(\phi_i)}\cdot\phi_i\right)\right|_{F_i}
\]
for $i=1,\dots,\dim X-1$, inductively.
Then
\[
 \bm{w}_{F_{\sbullet}}(\phi)=(w_1(\phi),\dots,w_{\dim X}(\phi))\coloneqq\left(\ord_{F_1}(\phi_1),\dots,\ord_{F_{\dim X}}(\phi_{\dim X})\right),
\]
which does not depend on a specific choice of $f_1,\dots,f_{\dim X}$.

\item
Assume that $X$ is a normal.
Let $\KK$ denote either $\ZZ$, $\QQ$ or $\RR$.
The $\KK$-module of all $\KK$-Cartier divisors on $X$ is denoted by $\Div_{\KK}(X)$.
Given any $D\in\Div_{\KK}(X)$, we set
\begin{equation}
H^0(D)\coloneqq\left\{\phi\in\Rat(X)^{\times}\,\colon \,D+(\phi)\geq 0\right\}\cup\{0\}.
\end{equation}

\item\label{NC:Green}
Let $K$ denote a number field, let $M_K^{\rm f}$ denote the set of finite places of $K$, and let $M_K\coloneqq M_K^{\rm f}\cup\{\infty\}$.
For each $v\in M_K^{\rm f}$, $K_v$ denotes the $v$-adic completion of $K$.
Let $X$ be a normal, projective, and geometrically connected $K$-variety, let $X_v^{\rm an}$ denote the Berkovich analytic space associated to $X\times_{\Spec(K)}\Spec(K_v)$ for $v\in M_K^{\rm f}$, and let $X_{\infty}^{\rm an}$ denote the complex analytic space associated to $X\times_{\Spec(\QQ)}\Spec(\CC)$.
Let $D$ be an $\RR$-Cartier divisor on $X$.
The \emph{support of $D$} is defined as
\[
\Supp(D)\coloneqq\bigcup_{\ord_Z(D)\neq 0}Z,
\]
where the union is taken over all codimension-one subvarieties $Z$ of $X$ such that $\ord_Z(D)\neq 0$ (see \cite[Notation and terminology~2]{IkomaDiff1}).

For each $v\in M_K$, a \emph{$D$-Green function} on $X_v^{\rm an}$ is a function $g_v\colon (X\setminus\Supp(D))_v^{\rm an}\to\RR$ such that, for each $x_0\in X_v^{\rm an}$, the function
\[
g_v(x)+\log\left(|f|(x)\right)
\]
extends to a continuous function defined around $x_0$, where $f$ denotes a local equation defining $D$ around $x_0$.

Let $(\mathscr{X},\mathscr{D})$ be a normal and projective $O_K$-model of $(X,D)$.
For each $v\in M_K^{\rm f}$, $\widetilde{\mathscr{X}}_v$ denotes the fiber over $v$ and $r_v\colon X_v^{\rm an}\to\widetilde{\mathscr{X}}_v$ denotes the reduction map over $v$.
The \emph{$D$-Green function associated to $(\mathscr{X},\mathscr{D})$} is defined as
\begin{equation}
g_v^{(\mathscr{X},\mathscr{D})}(x)=-\log\left(|f|(x)\right),
\end{equation}
where $f$ is a local equation defining $\mathscr{D}$ around $r_v(x)$.

\item\label{NC:adelic}
Let $D$ be an $\RR$-Cartier divisor on $X$.
An \emph{adelic $D$-Green function} $\pmb{g}$ is a formal sum
\[
\pmb{g}\coloneqq\sum_{v\in M_K}g_v[v]
\]
having the following properties:
\begin{enumerate}
\item For each $v\in M_K$, $g_v$ is a $D$-Green function on $X_v^{\rm an}$ and $g_{\infty}^{\rm an}$ is invariant under the complex conjugation.
\item There exists a normal and projective $O_K$-model $(\mathscr{X},\mathscr{D})$ of $(X,D)$ such that $g_v=g_v^{(\mathscr{X},\mathscr{D})}$ for all but finitely many $v$.
\end{enumerate}
The $O_K$-model $(\mathscr{X},\mathscr{D})$ appearing in the property (b) above is called a \emph{model of definition for $\pmb{g}$}.
Let $\KK$ be either $\RR$, $\QQ$, or $\ZZ$.
We refer to a couple of a $\KK$-Cartier divisor $D$ on $X$ and an adelic $D$-Green function $\pmb{g}^{\overline{D}}$ as an \emph{adelic $\KK$-Cartier divisor $\overline{D}$} on $X$.
The $\KK$-module of all adelic $\KK$-Cartier divisors on $X$ is denoted by $\aDiv_{\KK}(X)$.

Given a nonzero rational function $\phi$,
\begin{equation}
\widehat{(\phi)}\coloneqq\left(\div(\phi),\sum_{v\in M_K}-\log\left(|\phi|\right)[v]\right)
\end{equation}
is an adelic Cartier divisor on $X$.

Let $\overline{D}$ be an adelic $\RR$-Cartier divisor on $X$, and let $\varphi$ be a continuous function on $X_{\infty}^{\rm an}$ that is invariant under the complex conjugation.
Then we denote
\begin{equation}\label{eqn:NC:adelic}
\overline{D}(\varphi)\coloneqq\overline{D}+(0,\varphi[\infty]).
\end{equation}

Let $\KK$ and $\KK'$ denote either $\RR$, $\QQ$, or $\ZZ$.
The module of all pairs of adelic $\KK$-Cartier divisors on $X$ and $\KK'$-Cartier divisors on $X$ is denoted by $\aDDiv_{\KK,\KK'}(X)$.

Let $\mathscr{X}$ be a normal and projective $O_K$-model of $X$, and let $\left(\overline{\mathscr{D}};\mathscr{E}\right)$ be a couple of an arithmetic $\RR$-Cartier divisor $\overline{\mathscr{D}}=\left(\mathscr{D},g^{\overline{\mathscr{D}}}\right)$ on $\mathscr{X}$ and a horizontal $\RR$-Cartier divisor $\mathscr{E}$ on $\mathscr{X}$.
We define the \emph{adelization} of $\left(\overline{\mathscr{D}};\mathscr{E}\right)$ as
\begin{equation}
\left(\overline{\mathscr{D}};\mathscr{E}\right)^{\rm ad}\coloneqq\left(\mathscr{D}|_X,\sum_{v\in M_K^{\rm f}}g_v^{(\mathscr{X},\mathscr{D})}[v]+g^{\overline{\mathscr{D}}}[\infty];\mathscr{E}|_X\right).
\end{equation}

\item\label{NC:Vol}
A pair $\left(\overline{D};E\right)\in\aDDiv_{\RR,\RR}(X)$ is said to be \emph{effective} if $D\geq \max\{0,E\}$ and $g_v^{\overline{D}}\geq 0$ on $X_v^{\rm an}$ for every $v\in M_K$.
We say that $\left(\overline{D};E\right)$ is \emph{strictly effective} if $\left(\overline{D};E\right)$ is effective and $\inf_{x\in X_{\infty}^{\rm an}}\left\{g_{\infty}^{\overline{D}}(x)\right\}>0$.
We denote $\left(\overline{D};E\right)\geq 0$ (respectively, $\left(\overline{D};E\right)>0$) if $\left(\overline{D};E\right)$ is effective (respectively, strictly effective).
We set
\begin{align}
&\widehat{\Gamma}^{\rm ss}\left(\overline{D};E\right)\coloneqq\left\{\phi\in\Rat(X)^{\times}\,\colon \,\left(\overline{D}+\widehat{(\phi)};E\right)>0\right\}\cup\{0\}, \\
&\widehat{\Gamma}^{\rm s}\left(\overline{D};E\right)\coloneqq\left\{\phi\in\Rat(X)^{\times}\,\colon \,\left(\overline{D}+\widehat{(\phi)};E\right)\geq 0\right\}\cup\{0\},
\end{align}
and define
\begin{equation}
\avol\left(\overline{D};E\right)\coloneqq\limsup_{\substack{m\in\ZZ_{\geq 1}, \\ m\to\infty}}\frac{\log\left(\#\widehat{\Gamma}^{\rm ss}\left(m\overline{D};mE\right)\right)}{m^{\dim X+1}/(\dim X+1)!}.
\end{equation}

\item\label{NC:positive}
The \emph{height} of an algebraic point $x\in X(\overline{K})$ with respect to $\overline{D}$ is defined as
\[
 h_{\overline{D}}(x)\coloneqq\frac{1}{[\kappa(x):K]}\left(\sum_{w\in M_{\kappa(x)}^{\rm f}}g_{w|_K}(x^w)+\sum_{\sigma\colon \kappa(x)\to\CC}g_{\infty}(x^{\sigma})\right),
\]
where $\kappa(x)$ denotes the field of definition for $x$, $x^w$ denotes the point on $X_v^{\rm an}$ corresponding to $(\kappa(x),w)$, and $x^{\sigma}$ denotes the point on $X_{\infty}^{\rm an}$ defined by an embedding $\sigma\colon \kappa(x)\to\CC$.

\begin{description}
\item[(nef)] We say that $\overline{A}\in\aDiv_{\RR}(X)$ is \emph{nef} if $A$ is nef, $g_v^{\overline{A}}$ is semipositive for every $v\in M_K$ (see \cite[section 4.4]{MoriwakiAdelic}), and
\[
 \inf_{x\in X(\overline{K})}\left\{h_{\overline{A}}(x)\right\}\geq 0.
\]
\item[(integrable)] We say that $\overline{A}\in\aDiv_{\RR}(X)$ is \emph{integrable} if $\overline{A}$ can be written as a difference of two nef adelic $\RR$-Cartier divisors.
We denote by $\aInt_{\RR}(X)$ the $\RR$-vector space of all integrable adelic $\RR$-Cartier divisors on $X$.
\item[(ample)] We say that $\overline{A}\in\aDiv_{\RR}(X)$ is \emph{ample}, if $\overline{A}$ is nef and
\[
 \inf_{x\in X(\overline{K})}\left\{h_{\overline{A}}(x)\right\}>0.
\]
\item[(w-ample)] We say that $\overline{A}\in\aDiv_{\RR}(X)$ is \emph{weakly ample} or \emph{w-ample} for short if $\overline{A}$ is a positive $\RR$-linear combination $\sum_{i=1}^la_i\overline{A}_i$ of adelic Cartier divisors $\overline{A}_i$ such that each $A_i$ is ample and $H^0(mA_i)$ is generated by $\widehat{\Gamma}^{\rm ss}\left(m\overline{A}_i\right)$ for every $m\gg 1$ (see \cite{IkomaRem}).
\item[(big)] We say that $\left(\overline{D};E\right)\in\aDDiv_{\RR,\RR}(X)$ is \emph{big} if $\avol\left(\overline{D};E\right)>0$.
\item[(pseudo-effective)] We say that $\left(\overline{D};E\right)\in\aDDiv_{\RR,\RR}(X)$ is \emph{pseudo-effective} if $\avol\left(\overline{D}+\overline{B};E\right)>0$ for every big $\overline{B}\in\aDiv_{\RR}(X)$.
\end{description}

\item
There exists a unique multilinear map
\begin{align*}
 \adeg\colon  \aInt_{\RR}(X)^{\times\dim X}\times\aDiv_{\RR}(X) &\to\RR,\\
 \left(\overline{D}_1,\dots,\overline{D}_{\dim X+1}\right) &\mapsto\adeg\left(\overline{D}_1\cdots\overline{D}_{\dim X+1}\right)
\end{align*}
extending the arithmetic intersection numbers of Hermitian line bundles and having the following properties (see \cite{MoriwakiAdelic}):
\begin{enumerate}
\item The restriction $\adeg\colon \aInt_{\RR}(X)^{\times(\dim X+1)}\to\RR$ is symmetric.
\item If $\overline{D}_1,\dots,\overline{D}_{\dim X}$ are nef and $\overline{D}_{\dim X+1}$ is pseudo-effective, then
\[
 \adeg\left(\overline{D}_1\cdots\overline{D}_{\dim X+1}\right)\geq 0.
\]
\end{enumerate}

\item
Let $Y$ be a closed subvariety of $X$.
\begin{description}
\item[($Y$-effective)] We say that $\left(\overline{D};E\right)\in\aDDiv_{\RR,\RR}(X)$ is \emph{$Y$-effective} if $\left(\overline{D};E\right)\geq 0$ and $Y\not\subset\Supp(D-E)$.
We say that $\left(\overline{D};E\right)$ is \emph{strictly $Y$-effective} if $\left(\overline{D};E\right)$ is strictly effective and $Y$-effective.
We denote $\left(\overline{D};E\right)\geq_Y 0$ (respectively, $\left(\overline{D};E\right)>_Y0$) if $\left(\overline{D};E\right)$ is $Y$-effective (respectively, strictly $Y$-effective).
\item[($Y$-big)] We say that $\left(\overline{D};E\right)\in\aDDiv_{\RR,\RR}(X)$ is \emph{$Y$-big} if there exists a w-ample adelic $\RR$-Cartier divisor $\overline{A}$ such that $\left(\overline{D}-\overline{A};E\right)>_Y0$.
\item[($Y$-pseudo-effective)] We say that $\left(\overline{D};E\right)\in\aDDiv_{\RR,\RR}(X)$ is \emph{$Y$-pseudo-effective} if $\left(\overline{D}+\overline{B};E\right)$ is $Y$-big for every $Y$-big $\overline{B}\in\aDiv_{\RR}(X)$.
We denote $\left(\overline{D};E\right)\succeq_Y 0$ if $\left(\overline{D};E\right)$ is $Y$-pseudo-effective.
\end{description}
\end{enumerate}

%%%
\section{Arithmetic restricted volumes}

\subsection{Adelically normed vector spaces}

Let $K$ denote a number field.
An \emph{adelically normed $K$-vector space}
\[
\overline{V}\coloneqq\left(V,(\|\cdot\|_v^{\overline{V}})_{v\in M_K}\right)
\]
is a couple of a finite-dimensional $K$-vector space $V$ and a family of norms $(\|\cdot\|_v^{\overline{V}})_{v\in M_K}$ having the following properties:
\begin{enumerate}
\renewcommand{\labelenumi}{(\alph{enumi})}
\item For every $v\in M_K^{\rm f}$, $\|\cdot\|_v^{\overline{V}}$ is a non-Archimedean norm on $V\otimes_KK_v$.
\item $\|\cdot\|_{\infty}^{\overline{V}}$ is an Archimedean norm on $V\otimes_{\QQ}\CC$.
\item For each $a\in V$, $\|a\|_v^{\overline{V}}\leq 1$ for all but finitely many $v\in M_K^{\rm f}$.
\item Both of
\begin{equation}
\widehat{\Gamma}^{\rm s}\left(\overline{V}\right)\coloneqq\left\{a\in V\,\colon \,\text{$\|a\|_v^{\overline{V}}\leq 1$ for all $v\in M_K$}\right\}
\end{equation}
and
\begin{equation}
\widehat{\Gamma}^{\rm ss}\left(\overline{V}\right)\coloneqq\left\{a\in\widehat{\Gamma}^{\rm s}(\overline{V})\,\colon \,\|a\|_{\infty}^{\overline{V}}<1\right\}
\end{equation}
are finite sets.
\end{enumerate}
Given an adelically normed $K$-vector space $\overline{V}$ and a real number $\lambda\in\RR$, we set
\[
\|\cdot\|_v^{\overline{V}(\lambda)}\coloneqq\begin{cases} \|\cdot\|_v^{\overline{V}} & \text{if $v\in M_K^{\rm f}$,} \\ \exp(-\lambda)\|\cdot\|_v^{\overline{V}} & \text{if $v=\infty$,} \end{cases}
\]
and set $\overline{V}(\lambda)\coloneqq\left(V,(\|\cdot\|_v^{\overline{V}(\lambda)})\right)$.

\begin{remark}\label{rem:Yuan_rescale}
Yuan \cite[Lemma~2.9]{Yuan09} has proved the following estimate (see also \cite[Lemma~1.2.2]{MoriwakiEst}):
Let $\ast=\text{\rm ss}$ or $\text{\rm s}$. For any $\lambda\in\RR_{\geq 0}$, one has
\[
0\leq \log\left(\#\widehat{\Gamma}^{\ast}\left(\overline{V}(\lambda)\right)\right)-\log\left(\#\widehat{\Gamma}^{\ast}\left(\overline{V}\right)\right)\leq (\lambda+\log(3))\rk V.
\]
\end{remark}

\begin{remark}\label{rem:exact_sequence}
Let $\ast=\text{\rm ss}$ or $\text{\rm s}$.
Let $\overline{V}$ be an adelically normed $K$-vector space and let
\[
0\to V'\to V\xrightarrow{r} V''\to 0
\]
be an exact sequence of $K$-vector spaces.
We endow $V'$ with the subspace norms induced from $\overline{V}$.
\begin{enumerate}
\item\label{item: Yuan exact sequences} One has
\[
\log\left(\#\widehat{\Gamma}^{\ast}\left(\overline{V}\right)\right)\leq\log\left(\#\widehat{\Gamma}^{\ast}\left(\overline{V}'(\log(2))\right)\right)+\log\left(\#r\left(\widehat{\Gamma}^{\ast}\left(\overline{V}\right)\right)\right)
\]
and
\[
\log\left(\#\widehat{\Gamma}^{\ast}\left(\overline{V}(\log(2))\right)\right)\geq\log\left(\#\widehat{\Gamma}^{\ast}\left(\overline{V}'\right)\right)+\log\left(\#r\left(\widehat{\Gamma}^{\ast}\left(\overline{V}\right)\right)\right)
\]
(see \cite[Proposition~2.8]{Yuan09} or \cite[Lemma 1.2.2]{MoriwakiEst}).
\item\label{item: exact sequences combined} Combining Remark~\ref{rem:Yuan_rescale} and the assertion \eqref{item: Yuan exact sequences} above, one has
\begin{align*}
-\log(6)\rk V \leq\log\left(\#\widehat{\Gamma}^{\ast}\left(\overline{V}\right)\right)-\log\left(\#\widehat{\Gamma}^{\ast}\left(\overline{V}'\right)\right) &-\log\left(\#r\left(\widehat{\Gamma}^{\ast}\left(\overline{V}\right)\right)\right) \\
&\qquad\qquad \leq\log(6)\rk V'.
\end{align*}
\end{enumerate}
\end{remark}

We generalize Remark~\ref{rem:exact_sequence} \eqref{item: Yuan exact sequences} in two ways: Lemmas~\ref{lem:filtration_est} and \ref{lem:quot_exact} below, which play key roles in proving Theorem~A (see sections \ref{subsec: estimation of NO bodies} and \ref{subsec: upper bound}, respectively).

\begin{lemma}\label{lem:filtration_est}
Let $l\in\ZZ_{\geq 1}$, and let $\ast$ be either $\text{\rm ss}$ or $\text{\rm s}$.
Let $\overline{V}$ be an adelically normed $K$-vector space, and let
\[
V=V_1\supset V_2\supset\dots\supset V_{l+1}=\{0\}
\]
be a filtration of $V$.
We endow each $V_n$ with the subspace norm induced from $\overline{V}$, and denote the natural projection by $r_n\colon V_n\to V_n/V_{n+1}$ for each $n$.
We then have
\[
\log\left(\#\widehat{\Gamma}^{\ast}\left(\overline{V}(\log(l))\right)\right)\geq\sum_{n=1}^l\log\left(\# r_n\left(\widehat{\Gamma}^{\ast}\left(\overline{V}_n\right)\right)\right).
\]
\end{lemma}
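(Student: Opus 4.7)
The plan is to generalize the Yuan--Moriwaki argument underlying Remark~\ref{rem:exact_sequence}\eqref{item: Yuan exact sequences} by a single direct construction, rather than by iterating it inductively. For each $n=1,\dots,l$, I would pick a set-theoretic section $S_n\subset\widehat{\Gamma}^{\ast}\left(\overline{V}_n\right)$ of the projection $r_n$, so that $r_n$ restricts to a bijection $S_n\to r_n\left(\widehat{\Gamma}^{\ast}\left(\overline{V}_n\right)\right)$. I then consider the addition map
\[
\Phi:S_1\times S_2\times\dots\times S_l\to V,\qquad (s_1,\dots,s_l)\mapsto s_1+s_2+\dots+s_l,
\]
and aim to show that $\Phi$ is injective and that its image is contained in $\widehat{\Gamma}^{\ast}\left(\overline{V}(\log(l))\right)$. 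This immediately yields the desired inequality after taking logarithms, since then $\#\widehat{\Gamma}^{\ast}\left(\overline{V}(\log(l))\right)\geq\prod_{n=1}^l\#S_n=\prod_{n=1}^l\#r_n\left(\widehat{\Gamma}^{\ast}\left(\overline{V}_n\right)\right)$.

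Injectivity of $\Phi$ follows from a telescoping argument using the filtration: if $\Phi(s_{\sbullet})=\Phi(t_{\sbullet})$, then reducing modulo $V_2$ gives $r_1(s_1)=r_1(t_1)$, hence $s_1=t_1$ by the choice of $S_1$; then $\sum_{n\geq 2}s_n=\sum_{n\geq 2}t_n$ lies in $V_2$, and reducing modulo $V_3$ gives $s_2=t_2$, and inductively $s_n=t_n$ for all $n$. The containment of the image in $\widehat{\Gamma}^{\ast}\left(\overline{V}(\log(l))\right)$ relies on the non-Archimedean and Archimedean estimates applied to the sum. At each finite place $v$, the ultrametric inequality combined with the fact that the norms on the $V_n$ are the subspace norms inherited from $\overline{V}$ gives $\|s_1+\dots+s_l\|_v^{\overline{V}}\leq\max_n\|s_n\|_v^{\overline{V}_n}\leq 1$. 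At the Archimedean place, the triangle inequality yields $\|s_1+\dots+s_l\|_{\infty}^{\overline{V}}\leq\sum_n\|s_n\|_{\infty}^{\overline{V}_n}$, which is at most $l$ when $\ast=\text{s}$ and strictly less than $l$ when $\ast=\text{ss}$; after the Archimedean rescaling by $\exp(-\log(l))=1/l$ defining $\overline{V}(\log(l))$, this becomes $\leq 1$ (respectively $<1$), as required.

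There is no substantial obstacle in this argument; the only point requiring care is verifying that the chosen lifts $s_n$ genuinely lie in $\widehat{\Gamma}^{\ast}\left(\overline{V}_n\right)$ with respect to the subspace norms, which is automatic since those subspace norms are simply the ambient norms of $\overline{V}$ restricted to $V_n$. The conceptual point of the construction is that the Archimedean loss from a single triangle inequality applied to $l$ summands is exactly absorbed by a single Archimedean rescaling of $\log(l)$, which is significantly better than the cumulative $(l-1)\log(2)$ rescaling that an unrefined iteration of Remark~\ref{rem:exact_sequence}\eqref{item: Yuan exact sequences} would produce.
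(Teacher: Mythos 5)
Your proof is correct and coincides with the paper's: the paper likewise fixes a set-theoretic section $\sigma_n$ of each $r_n$ and shows that $(a_1',\dots,a_l')\mapsto\sum_n\sigma_n(a_n')$ is a well-defined injection into $\widehat{\Gamma}^{\ast}\bigl(\overline{V}(\log(l))\bigr)$, with the same telescoping induction for injectivity and the same single triangle-inequality bound at the Archimedean place justifying the $\log(l)$ rescaling. Your extra spelling-out of the ultrametric estimate at the finite places is correct but implicit in the paper.
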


\begin{proof}
Let $\ast$ denote $\text{\rm ss}$ (respectively, $\text{\rm s}$).
For each $n$, we fix a section $\sigma_n\colon r_n\left(\widehat{\Gamma}^{\ast}\left(\overline{V}_n\right)\right)\to\widehat{\Gamma}^{\ast}\left(\overline{V}_n\right)$ of the surjection $r_n\colon \widehat{\Gamma}^{\ast}\left(\overline{V}_n\right)\to r_n\left(\widehat{\Gamma}^{\ast}\left(\overline{V}_n\right)\right)$.
The required inequality follows from injectivity of the map
\begin{equation}\label{eqn:filtration_est1}
\prod_{n=1}^lr_n\left(\widehat{\Gamma}^{\ast}\left(\overline{V}_n\right)\right)\to\widehat{\Gamma}^{\ast}\left(\overline{V}(\log(l))\right),\quad \left(a_1',\dots,a_l'\right)\mapsto \sum_{n=1}^l\sigma_n\left(a_n'\right).
\end{equation}
Indeed, we have
\[
\left\|\sum_{n=1}^l\sigma_n\left(a_n'\right)\right\|_{\infty}^{\overline{V}}<l\quad \text{(respectively, $\leq l$)}
\]
for any $\left(a_1',\dots,a_l'\right)\in\prod_{n=1}^lr_n\left(\widehat{\Gamma}^{\ast}(\overline{V}_n)\right)$, which assures the existence of the map \eqref{eqn:filtration_est1}.
If we assume
\[
\sum_{n=1}^l\sigma_n(r_n(a_{1n}))=\sum_{n=1}^l\sigma_n(r_n(a_{2n}))
\]
for $(a_{11},\dots,a_{1l}),(a_{21},\dots,a_{2l})\in\prod_{n=1}^l\widehat{\Gamma}^{\ast}(\overline{V}_n)$, then we have inductively
\begin{align*}
r_1(a_{11})-r_1(a_{21}) &=r_1\left(\sum_{n=1}^l\sigma_n(r_n(a_{1n}))-\sum_{n=1}^l\sigma_n(r_n(a_{2n}))\right)=0, \\
r_2(a_{12})-r_2(a_{22})&=r_2\left(\sum_{n=2}^l\sigma_n(r_n(a_{1n}))-\sum_{n=2}^l\sigma_n(r_n(a_{2n}))\right)=0, \\
&\vdots \\
r_l(a_{1l})-r_l(a_{2l})&=r_l\left(\sigma_l(r_l(a_{1l}))-\sigma_l(r_l(a_{2l}))\right)=0.
\end{align*}
\end{proof}

\begin{lemma}\label{lem:quot_exact}
Let $\ast$ either $\text{\rm ss}$ or $\text{\rm s}$.
Let $\overline{V}$ be an adelically normed $K$-vector space, and let
\[
\xymatrix{
 0 \ar[r] & \widetilde{W} \ar[r] & \widetilde{V} \ar[r] & V' \ar[r] & 0 \\
 0 \ar[r] & W \ar[r] \ar[u]^-{r|_W} & V \ar[r]^-{r'} \ar[u]^-{r} & V' \ar[r] \ar@{=}[u] & 0
}
\]
be a commutative diagram of $K$-vector spaces where the upper and the lower sequences are respectively exact.
We endow $W$ with the subspace norm induced from $\overline{V}$.
We then have
\[
\log\left(\#r\left(\widehat{\Gamma}^{\ast}\left(\overline{V}\right)\right)\right)\leq\log\left(\#r\left(\widehat{\Gamma}^{\ast}\left(\overline{W}(\log(2))\right)\right)\right)+\log\left(\#r'\left(\widehat{\Gamma}^{\ast}\left(\overline{V}\right)\right)\right)
\]
and
\[
\log\left(\#r\left(\widehat{\Gamma}^{\ast}\left(\overline{V}(\log(2))\right)\right)\right)\geq\log\left(\#r\left(\widehat{\Gamma}^{\ast}\left(\overline{W}\right)\right)\right)+\log\left(\#r'\left(\widehat{\Gamma}^{\ast}\left(\overline{V}\right)\right)\right).
\]
\end{lemma}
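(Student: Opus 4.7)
Both inequalities will be established by constructing explicit injections between finite sets, in direct analogy with the classical exact-sequence estimate (Remark~\ref{rem:exact_sequence}); the only new feature is that the vertical map $r$ is not the identity, so we must track images under $r$ rather than elements themselves. Throughout, let $\widetilde{r}':\widetilde{V}\to V'$ denote the surjection in the upper row, so that by commutativity $\widetilde{r}'\circ r=r'$.

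For the upper bound, I would fix once and for all a set-theoretic section $\sigma:r'\bigl(\widehat{\Gamma}^{\ast}(\overline{V})\bigr)\to\widehat{\Gamma}^{\ast}(\overline{V})$ of $r'$. Given $a\in r\bigl(\widehat{\Gamma}^{\ast}(\overline{V})\bigr)$, choose any $b\in\widehat{\Gamma}^{\ast}(\overline{V})$ with $r(b)=a$, and set $s'(a):=\widetilde{r}'(a)=r'(b)$; this depends only on $a$. Then $b-\sigma(s'(a))$ lies in $W$, has non-Archimedean norms $\leq 1$ by the ultrametric inequality, and Archimedean norm $<2$ (respectively $\leq 2$) by the triangle inequality, so it belongs to $\widehat{\Gamma}^{\ast}\bigl(\overline{W}(\log(2))\bigr)$. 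Define
\[
\psi(a):=\bigl(r(b-\sigma(s'(a))),\,s'(a)\bigr)\in r\bigl(\widehat{\Gamma}^{\ast}(\overline{W}(\log(2)))\bigr)\times r'\bigl(\widehat{\Gamma}^{\ast}(\overline{V})\bigr).
\]
One checks that the first coordinate does not depend on the chosen lift $b$ (since any two lifts differ by an element of $\ker r$, which is killed after applying $r$), so $\psi$ is well defined, and from $r(b)=r(\sigma(s'(a)))+r(b-\sigma(s'(a)))$ one recovers $a$ from $\psi(a)$, yielding injectivity.

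For the lower bound, I would fix sections $\sigma:r'\bigl(\widehat{\Gamma}^{\ast}(\overline{V})\bigr)\to\widehat{\Gamma}^{\ast}(\overline{V})$ and $\tau:r\bigl(\widehat{\Gamma}^{\ast}(\overline{W})\bigr)\to\widehat{\Gamma}^{\ast}(\overline{W})$, and consider
\[
\phi:r'\bigl(\widehat{\Gamma}^{\ast}(\overline{V})\bigr)\times r\bigl(\widehat{\Gamma}^{\ast}(\overline{W})\bigr)\to r\bigl(\widehat{\Gamma}^{\ast}(\overline{V}(\log(2)))\bigr),\qquad (s',\widetilde{w})\mapsto r\bigl(\sigma(s')+\tau(\widetilde{w})\bigr).
\]
The norm estimates (ultrametric at finite places, triangle inequality at $\infty$) place $\sigma(s')+\tau(\widetilde{w})$ inside $\widehat{\Gamma}^{\ast}(\overline{V}(\log(2)))$, so $\phi$ lands in the target. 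For injectivity, apply $\widetilde{r}'$ to $\phi(s',\widetilde{w})$: since $\tau(\widetilde{w})\in W=\ker r'$, one recovers $s'=\widetilde{r}'(\phi(s',\widetilde{w}))$, and then $r(\tau(\widetilde{w}))=\widetilde{w}$ is recovered by subtraction.

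The only delicate points are verifying the norm bounds in the two strict-small cases simultaneously with the small cases, and taking care that in the upper-bound construction the chosen representative $b$ of $a\in r(\widehat{\Gamma}^{\ast}(\overline{V}))$ does not affect the value of $\psi$; both are routine once the diagram and definition of $\overline{V}(\log(2))$ are unwound. Taking logarithms of cardinalities of the resulting injections produces the two claimed inequalities.
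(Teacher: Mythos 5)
Your proof is correct and takes essentially the same approach as the paper's: fix a section $\sigma$ of $r'$ on $r'\bigl(\widehat{\Gamma}^{\ast}(\overline{V})\bigr)$ and use the ultrametric/triangle inequalities to show that the $\log(2)$-rescaled small sections absorb the correction $b-\sigma(r'(b))$. The only cosmetic differences are that for the first inequality you build the injection $\psi$ explicitly rather than observing that the complementary map $(b',a')\mapsto b'+r(\sigma(a'))$ is surjective onto $r\bigl(\widehat{\Gamma}^{\ast}(\overline{V})\bigr)$, and for the second you introduce an auxiliary section $\tau$ of $r|_{\widehat{\Gamma}^{\ast}(\overline{W})}$ where the paper simply picks representatives; these are equivalent reformulations.
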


\begin{proof}
Let $\ast$ denote $\text{\rm ss}$ (respectively, $\text{\rm s}$), and fix a section $\sigma\colon r'\left(\widehat{\Gamma}^{\ast}\left(\overline{V}\right)\right)\to\widehat{\Gamma}^{\ast}\left(\overline{V}\right)$ of the surjection $r'\colon \widehat{\Gamma}^{\ast}\left(\overline{V}\right)\to r'\left(\widehat{\Gamma}^{\ast}\left(\overline{V}\right)\right)$.

The first inequality follows from the fact that the image of the map
\begin{equation}
r\left(\widehat{\Gamma}^{\ast}\left(\overline{W}(\log(2))\right)\right)\times r'\left(\widehat{\Gamma}^{\ast}\left(\overline{V}\right)\right) \to\widetilde{V}, \quad \left(b',a'\right) \mapsto b'+r\left(\sigma\left(a'\right)\right),
\end{equation}
contains $r\left(\widehat{\Gamma}^{\ast}\left(\overline{V}\right)\right)$.
Indeed, given any $r(a)\in r\left(\widehat{\Gamma}^{\ast}\left(\overline{V}\right)\right)$ with $a\in\widehat{\Gamma}^{\ast}\left(\overline{V}\right)$, we have $a-\sigma(r'(a))\in W$ and $\|a-\sigma(r'(a))\|_{\infty}^{\overline{V}}< 2$ (respectively, $\leq 2$).
Hence, $a-\sigma(r'(a))\in\widehat{\Gamma}^{\ast}\left(\overline{W}(\log(2))\right)$ and
\[
r(a)=r(a-\sigma(r'(a)))+r(\sigma(r'(a))).
\]

Similarly, the second follows from the fact that the map
\begin{equation}
r\left(\widehat{\Gamma}^{\ast}\left(\overline{W}\right)\right)\times r'\left(\widehat{\Gamma}^{\ast}\left(\overline{V}\right)\right) \to r\left(\widehat{\Gamma}^{\ast}\left(\overline{V}(\log(2))\right)\right),\quad \left(b',a'\right) \mapsto b'+r\left(\sigma\left(a'\right)\right),
\end{equation}
is injective.
Indeed, given any $b\in\widehat{\Gamma}^{\ast}\left(\overline{W}\right)$ and $a'\in r'\left(\widehat{\Gamma}^{\ast}\left(\overline{V}\right)\right)$, we have $\|b+\sigma(a')\|_{\infty}^{\overline{V}}<2$ (respectively, $\leq 2$), which assures the existence of the above map.
If $r(b_1)+r(\sigma(r'(a_1)))=r(b_2)+r(\sigma(r'(a_2)))$ for $b_1,b_2\in\widehat{\Gamma}^{\ast}\left(\overline{W}\right)$ and $a_1,a_2\in\widehat{\Gamma}^{\ast}\left(\overline{V}\right)$, then
\[
r'(a_1)-r'(a_2)=r'(b_1)-r'(b_2)=0
\]
and $r(b_1)=r(b_2)$.
\end{proof}

\subsection{Estimation of Green functions}

In this subsection, $M$ denotes a equidimensional complex projective manifold.
Let $\overline{A}=\left(A,g^{\overline{A}}\right)$ be a \emph{$C^{\infty}$-metrized $\RR$-Cartier divisor} on $M$: namely a couple of an $\RR$-Cartier divisor $A$ on $M$ and an $A$-Green function $g^{\overline{A}}$ on $M$.
Assume that $A$ is ample, and that the curvature form $c_1\left(\overline{A}\right)$ is positive pointwise on $M$.
Moreover, let $\overline{D}$ be a $C^{\infty}$-metrized $\RR$-Cartier divisor on $M$, and let $\overline{\bm{E}}\coloneqq \left(\overline{E}_1,\dots,\overline{E}_l\right)$ be a family of $C^{\infty}$-metrized $\RR$-Cartier divisors on $M$ such that $E_1,\dots,E_l$ are effective.
We denote $\bm{E}\coloneqq\left(E_1,\dots,E_l\right)$, and use the dot-product notation as in Notation and terminology~\ref{NC:InnProd}.

We choose an $a_0\in\RR_{>0}$ such that
\[
\overline{A}+t\overline{D}-\sum_{i=1}^lr_i\overline{E}_i
\]
is ample with pointwise positive curvature form for every $t\in\RR$ and $\bm{r}\coloneqq (r_1,\dots,r_l)\in\RR^l$ with $|t|+\|\bm{r}\|\leq a_0$.
By \cite[Theorem~4.6]{MoriwakiZar} (see also \cite[Theorem~3.4]{Berman09} or \cite[Theorem~1.4]{Berman_Demailly}), given any $t\in\RR$ and $\bm{r}\in (\RR_{\geq 0})^l$ with $|t|+\|\bm{r}\|\leq a_0$, the set of $(A+tD-\bm{r}\cdot\bm{E})$-Green functions,
\begin{align*}
&G_{\textup{PSH}\cap C^0}\left(A+tD-\bm{r}\cdot\bm{E}\right)_{\leq g^{\overline{A}+t\overline{D}}} \\
&\quad\coloneqq\left\{g\leq g^{\overline{A}+t\overline{D}}\,\colon\,\text{$g$ is an $(A+tD-\bm{r}\cdot\bm{E})$-Green function of $(\textup{PSH}\cap C^0)$-type}\right\},
\end{align*}
admits a unique maximal element $g^{\overline{A+tD-\bm{r}\cdot\bm{E}}^{\rm (env)}}$ (see \cite[section~4]{MoriwakiZar} for the notation).
The following lemma plays an essential role in the proof of Theorem~B (see sections~\ref{subsec:Cont_at_the_boundary} and \ref{subsec:Yuan}).

\begin{lemma}\label{lem:PSH_Env}
For any $\varepsilon\in\RR$ with $0<\varepsilon\leq 1$, there exists a $\lambda_{\varepsilon}\in\RR_{>0}$ such that, for any $t\in\RR$ and $\bm{r}\in(\RR_{\geq 0})^l$ with $|t|+\|\bm{r}\|\leq a_0$, one has
\[
g^{\overline{A}+t\overline{D}-\bm{r}\cdot\overline{\bm{E}}}\leq g^{\overline{A+tD-\bm{r}\cdot\bm{E}}^{\rm (env)}}\leq g^{\overline{A}+t\overline{D}-\bm{r}\cdot\overline{\bm{E}}}+\left(\varepsilon +\lambda_{\varepsilon}\|\bm{r}\|\right).
\]
\end{lemma}

\begin{proof}
As $g^{\overline{A}+t\overline{D}-\bm{r}\cdot\overline{\bm{E}}}$ is an $(A+tD-\bm{r}\cdot\bm{E})$-Green function of $(\textup{PSH}\cap C^{\infty})$-type, one has
\[
g^{\overline{A}+t\overline{D}-\bm{r}\cdot\overline{\bm{E}}}\leq g^{\overline{A+tD-\bm{r}\cdot\bm{E}}^{\rm (env)}}
\]
for any $t\in\RR$ and $\bm{r}\in(\RR_{\geq 0})^l$ with $|t|+\|\bm{r}\|\leq a_0$.

For each $x\in M$, we can choose an open neighborhood $U_x$ of $x$ on which $g^{\overline{A}}$ (respectively, $g^{\overline{D}}$, $g^{\overline{E}_i}$) can be written as
\begin{align*}
&g^{\overline{A}}=u_{A,x}-\log(|f_A|) \\
\text{(respectively,}\qquad &g^{\overline{D}}=u_{D,x}-\log(|f_D|),\quad g^{\overline{E}_i}=u_{E_i,x}-\log(|f_{E_i}|)\qquad\text{),}
\end{align*}
where $f_A$ (respectively, $f_D$, $f_{E_i}$) denotes a local equation defining $A$ (respectively, $D$, $E_i$) on $U_x$ and $u_{A,x}$ (respectively, $u_{D,x}$, $u_{E_i,x}$) denotes a smooth plurisubharmonic function on $U_x$.
By shrinking $U_x$ if it is necessary, we may assume that
\begin{align*}
&\lvert u_{A,x}-u_{A,x}(x)\rvert\leq \frac{\varepsilon}{4} \\
\text{(respectively,}\qquad &\lvert u_{D,x}-u_{D,x}(x)\rvert\leq\frac{\varepsilon}{4a_0},\quad \lvert u_{E_i,x}-u_{E_i,x}(x)\rvert\leq 1\qquad\text{),}
\end{align*}
holds on $U_x$.

Let $g\in G_{\textup{PSH}\cap C^0}(A+tD-\bm{r}\cdot\bm{E})_{\leq g^{\overline{A}+t\overline{D}}}$, and write
\[
g=v_x-\log(|f_A|)-t\log(|f_D|)+\sum_{i=1}^lr_i\log(|f_{E_i}|)
\]
on $U_x$, where $v_x$ denotes a continuous plurisubharmonic function on $U_x$.
Then, the condition $g\leq g^{\overline{A}+t\overline{D}}$ is equivalent to
\[
v_x\leq u_{A,x}+tu_{D,x}-\sum_{i=1}^lr_i\log(|f_{E_i}|).
\]
Hence, by \cite[Lemma~4.1]{MoriwakiZar}, we can find an open neighborhood $V_x\subset U_x$ of $x$ and a constant $\lambda_x\in\RR_{>0}$, which depend only on $E_1,\dots,E_l$ and $U_x$, such that
\[
v_x\leq \left(u_{A,x}(x)+tu_{D,x}(x)+\frac{\varepsilon}{2}\right)+\lambda_x\|\bm{r}\|\leq u_{A,x}+tu_{D,x}+(\varepsilon+\lambda_x\|\bm{r}\|),
\]
thus,
\[
g\leq g^{\overline{A}+t\overline{D}-\bm{r}\cdot\overline{\bm{E}}}+\varepsilon+\sum_{i=1}^lr_i(\lambda_x+u_{E_i,x}(x)+1)
\]
holds on $V_x$ for every $g\in G_{\textup{PSH}\cap C^0}(A+tD-\bm{r}\cdot\bm{E})_{\leq g^{\overline{A}+t\overline{D}}}$.

Since $M$ is compact, we can find a finite number of points $x_1,\dots,x_n\in M$ such that $(V_{x_i})_i$ covers $M$.
Set
\[
\lambda_{\varepsilon}\coloneqq\max_{i,j}\left\{\lambda_{x_j}+u_{E_i,x_j}(x_j)+1\right\}.
\]
Then, we have
\[
g^{\overline{A+tD-\bm{r}\cdot\bm{E}}^{\rm (env)}}\leq g^{\overline{A}+t\overline{D}-\bm{r}\cdot\overline{\bm{E}}}+(\varepsilon+\lambda_{\varepsilon}\|\bm{r}\|)
\]
as desired.
\end{proof}

\subsection{Arithmetic restricted volumes}\label{subsec:ARV}

Let $X$ be a normal, projective, and geometrically connected $K$-variety, and let $\left(\overline{D};E\right)\in\aDDiv_{\RR,\RR}(X)$.
For each $v\in M_K$ and $\phi\in H^0(D-E)\otimes_KK_v$, the $D$-Green function $g_v^{\overline{D}}$ defines a metric as
\begin{equation}
|\phi|_v^{\overline{D}}(x)\coloneqq |\phi|(x)\exp\left(-g_v^{\overline{D}}(x)\right)=\exp\left(-g_v^{\overline{D}+\widehat{(\phi)}}(x)\right)
\end{equation}
for $x\in X_v^{\rm an}$, and the supremum norm $\|\cdot\|_{v,\sup}^{\overline{D}}$ on $H^0(D-E)\otimes_KK_v$ as
\begin{equation}
\|\phi\|_{v,\sup}^{\overline{D}}\coloneqq\sup_{x\in X_v^{\rm an}}\left\{|\phi|_v^{\overline{D}}(x)\right\}
\end{equation}
(see Notation and terminology~\ref{NC:Green}).
We set
\begin{align}
&\widehat{\Gamma}^{\rm f}\left(\overline{D};E\right)\coloneqq\left\{\phi\in H^0(D-E)\,\colon \,\|\phi\|_{v,\sup}^{\overline{D}}\leq 1,\,\forall v\in M_K^{\rm f}\right\}, \\
&\widehat{\Gamma}^{\rm s}\left(\overline{D};E\right)\coloneqq\left\{\phi\in\widehat{\Gamma}^{\rm f}\left(\overline{D};E\right)\,\colon \,\|\phi\|_{\infty,\sup}^{\overline{D}}\leq 1\right\},
\end{align}
and
\begin{equation}
\widehat{\Gamma}^{\rm ss}\left(\overline{D};E\right)\coloneqq\left\{\phi\in\widehat{\Gamma}^{\rm f}\left(\overline{D};E\right)\,\colon \,\|\phi\|_{\infty,\sup}^{\overline{D}}<1\right\}
\end{equation}
(see Notation and terminology~\ref{NC:Vol}).

Let $Y$ be a closed subscheme of $X$, and assume that $\left(\overline{D};E\right)\in\aDDiv_{\ZZ,\RR}(X)$ and $E\geq 0$.
We set
\begin{equation}
H^0_{X|Y}(D)\coloneqq\Image\left(H^0(D)\to H^0\left(\OO_X(D)|_Y\right)\right)
\end{equation}
and
\begin{equation}
\widehat{\Gamma}^{\ast}_{X|Y}\left(\overline{D};E\right)\coloneqq\Image\left(\widehat{\Gamma}^{\ast}\left(\overline{D};E\right)\to H^0\left(\left.\OO_X\left(D-\lceil E\rceil\right)\right|_Y\right)\right)
\end{equation}
for $\ast=\text{\rm f}$, $\text{\rm s}$, and $\text{\rm ss}$.
Here $\lceil E\rceil$ denotes the round-up Weil divisor of $E$.
In the rest of this paper, we only treat the case where $\lceil E\rceil$ is also a Cartier divisor.
We set
\begin{equation}
\ah_{X|Y}\left(\overline{D};E\right)\coloneqq\log\left(\#\widehat{\Gamma}^{\ast}_{X|Y}\left(\overline{D};E\right)\right)
\end{equation}
for $\ast=\text{\rm s}$ and $\text{\rm ss}$.

\begin{definition}\label{defn:arithmetic_restricted_vol1}
Let $\ast$ be either ${\rm ss}$ or ${\rm s}$, and let $\left(\overline{D};E\right)\in\aDDiv_{\ZZ,\ZZ}(X)$ with $E\geq 0$.
\begin{enumerate}
\item We define the \emph{CL-hull} of the finite set $\widehat{\Gamma}^{\ast}_{X|Y}\left(\overline{D};E\right)$ as the intersection
\begin{equation}
 \widehat{\Gamma}^{\ast}_{CL(X|Y)}\left(\overline{D};E\right)\coloneqq\left\langle\widehat{\Gamma}^{\ast}_{X|Y}\left(\overline{D};E\right)\right\rangle_{\ZZ}\cap\Conv_{\RR}\left(\widehat{\Gamma}^{\ast}_{X|Y}\left(\overline{D};E\right)\right),
\end{equation}
where $\left\langle\widehat{\Gamma}^{\ast}_{X|Y}\left(\overline{D};E\right)\right\rangle_{\ZZ}$ denotes the $\ZZ$-submodule generated by $\widehat{\Gamma}^{\ast}_{X|Y}\left(\overline{D};E\right)$ and $\Conv_{\RR}\left(\widehat{\Gamma}^{\ast}_{X|Y}\left(\overline{D};E\right)\right)$ denotes the convex hull of $\widehat{\Gamma}^{\ast}_{X|Y}\left(\overline{D};E\right)$ in the $\RR$-vector space generated by $\widehat{\Gamma}^{\ast}_{X|Y}\left(\overline{D};E\right)$.
We set
\begin{equation}
\ah_{CL(X|Y)}\left(\overline{D};E\right)\coloneqq\log\left(\#\widehat{\Gamma}^{\ast}_{CL(X|Y)}\left(\overline{D};E\right)\right),
\end{equation}
and define the \emph{arithmetic restricted volume of $(\overline{D};E)$ along $Y$} as
\begin{equation}\label{eqn:defavolqbase}
 \avolq{X|Y}(\overline{D};E)\coloneqq\limsup_{\substack{m\in\ZZ_{\geq 1}, \\ m\to\infty}}\frac{\ahss_{CL(X|Y)}\left(m\overline{D};mE\right)}{m^{\dim Y+1}/(\dim Y+1)!}.
\end{equation}
\item We endow $H^0_{X|Y}(D-E)\otimes_{\QQ}\RR$ with the quotient norm $\|\cdot\|_{\infty,\sup,\quot(X|Y)}^{\overline{D}}$ induced from $\left(H^0(D-E),\|\cdot\|_{\infty,\sup}^{\overline{D}}\right)$, and set
\[
\widehat{\Gamma}^{\rm ss}_{\quot(X|Y)}\left(\overline{D};E\right)\coloneqq\left\{\phi\in\widehat{\Gamma}^{\rm f}_{X|Y}\left(\overline{D};E\right)\,\colon \,\|\phi\|_{\infty,\sup,\quot(X|Y)}^{\overline{D}}<1\right\}.
\]
We set
\[
\ahss_{\quot(X|Y)}\left(\overline{D};E\right)\coloneqq\log\left(\#\widehat{\Gamma}^{\rm ss}_{\quot(X|Y)}\left(\overline{D};E\right)\right),
\]
and define
\begin{equation}
\avolq{\quot(X|Y)}\left(\overline{D};E\right)\coloneqq\limsup_{\substack{m\in\ZZ_{\geq 1}, \\ m\to\infty}}\frac{\ahss_{\quot(X|Y)}\left(m\overline{D};mE\right)}{m^{\dim Y+1}/(\dim Y+1)!}.
\end{equation}
\end{enumerate}
\end{definition}

\begin{remark}\label{rem:general_rule2}
Note that the arithmetic restricted volumes defined in Definition~\ref{defn:arithmetic_restricted_vol1} have the same basic properties as described in \cite[section~7]{IkomaRem}.
In particular, for $\left(\overline{D};E\right),\left(\overline{D}';E'\right)\in\aDDiv_{\ZZ,\ZZ}(X)$ with $\min\{E,E'\}\geq 0$,
if $s\in\widehat{\Gamma}^{\ast}_{CL(X|Y)}\left(\overline{D};E\right)$ and $s'\in\widehat{\Gamma}^{\rm s}_{X|Y}\left(\overline{D}';E'\right)$, then $s\cdot s'\in\widehat{\Gamma}^{\ast}_{CL(X|Y)}\left(\overline{D}+\overline{D}';E+E'\right)$.
\end{remark}

\begin{lemma}\label{lem: Estimates II generation}
Let $\overline{A}$ be a w-ample adelic Cartier divisor on $X$, and let $\bm{E}\coloneqq\left(E_1,\dots,E_l\right)$ be a family of effective Cartier divisors on $X$.
There then exist an $\varepsilon_0\in\RR_{>0}$, an $r_0\in\QQ$ with $0<r_0\leq 1$, and a $p_0\in\ZZ_{\geq 1}$ such that
\[
H^0(pA-\bm{q}\cdot\pmb{E})=\left\langle\widehat{\Gamma}^{\rm ss}\left(p\left(\overline{A}(-\varepsilon_0)\right);\bm{q}\cdot\bm{E}\right)\right\rangle_{\QQ}
\]
for every $(p,\bm{q})\in(\ZZ_{\geq 0})^{l+1}$ with $p\geq p_0$ and $\|\bm{q}\|\leq r_0p$.
\end{lemma}

\begin{proof}
We take any $\varepsilon_0\in\RR_{>0}$ such that $\overline{A}'\coloneqq\overline{A}(-\varepsilon_0)$ is also w-ample, and choose an $a_0\in\ZZ_{\geq 1}$ such that $a_0A-E_i$ is ample for every $i$.
We set
\[
V\coloneqq\mathcal{O}_X(A)\oplus\bigoplus_{i=1}^l\mathcal{O}_X\left(a_0A-E_i\right),
\]
and consider the projective bundle
\[
\widetilde{X}\coloneqq\PP(V)=\Proj_{\mathcal{O}_X}\left(\Sym (V)\right).
\]
By \cite[Lemma~2.3.2]{LazarsfeldI}, the tautological sheaf $\mathcal{O}_{\widetilde{X}}(1)$ is also ample.
Hence we can choose an $m_0\in\ZZ_{\geq 1}$ such that the section algebra
\[
\bigoplus_{m=0}^{\infty}H^0\left(\mathcal{O}_{\widetilde{X}}(m)\right)
\]
is generated by sections in
\[
H^0\left(\mathcal{O}_{\widetilde{X}}(m)\right)=\bigoplus_{q_0+\|\bm{q}\|=m}H^0\left(\left(q_0+a_0\|\bm{q}\|\right)A-\bm{q}\cdot\pmb{E}\right)
\]
with $m\leq m_0$.
We can choose a sufficiently large integer multiple $b_0$ of $m_0$ such that
\[
H^0(pA-\bm{q}\cdot\pmb{E})=\left\langle\widehat{\Gamma}^{\rm ss}\left(p\overline{A};\bm{q}\cdot\pmb{E}\right)\right\rangle_{\QQ}
\]
for every $p\geq b_0$ and every $\bm{q}\in(\ZZ_{\geq 0})^l$ with $\|\bm{q}\|\leq m_0$.
Thus, if we take $p_0\geq b_0$ and $r_0\leq 1/(a_0m_0+2b_0)$, then we obtain the desired assertion.
\end{proof}

\begin{proposition}\label{prop: Estimates II inclusions}
Let $Y$ be a prime Cartier divisor on $X$, let $\overline{A}$ be a w-ample adelic Cartier divisor on $X$, and let $\bm{E}\coloneqq\left(E_1,\dots,E_l\right)$ be a family of effective Cartier divisors on $X$ such that $\ord_Y(E_j)=0$ for every $j$.
There then exists a $\rho_0\in\QQ_{>0}$ such that, given any $\varepsilon\in\RR_{>0}$, one can find a $p_0(\varepsilon)\in\ZZ_{\geq 1}$, which depends on $\overline{A}$, $\overline{\bm{D}}$, $\bm{E}$, and $\varepsilon$, such that
\[
\widehat{\Gamma}^{\rm ss}_{\quot(X|Y)}\left(p\overline{A};rY+\bm{q}\cdot\bm{E}\right)\subset\widehat{\Gamma}^{\rm ss}_{X|Y}\left(p\left(\overline{A}(\varepsilon)\right);rY+\bm{q}\cdot\bm{E}\right)
\]
and
\[
\widehat{\Gamma}^{\rm ss}_{\quot(X|Y)}\left(p\left(\overline{A}(-\varepsilon)\right);rY+\bm{q}\cdot\bm{E}\right)\subset\widehat{\Gamma}^{\rm ss}_{X|Y}\left(p\overline{A};rY+\bm{q}\cdot\bm{E}\right)
\]
for every $(p,\bm{q},r)\in(\ZZ_{\geq 0})^{l+2}$ with $p\geq p_0(\varepsilon)$ and $r+\|\bm{q}\|\leq\rho_0p$.
\end{proposition}

\begin{proof}
By Lemma~\ref{lem: Estimates II generation}, there exist an $\varepsilon_0\in\RR_{>0}$, a $\rho_0'\in\QQ_{>0}$, and a $p_0'\in\ZZ_{\geq 1}$ such that
\[
H^0\left(pA-rY-\bm{q}\cdot\bm{E}\right)=\left\langle\widehat{\Gamma}^{\rm ss}\left(p\left(\overline{A}(-\varepsilon_0)\right);rY+\bm{q}\cdot\bm{E}\right)\right\rangle_{\QQ}
\]
for every $(p,\bm{q},r)\in(\ZZ_{\geq 0})^{l+2}$ with $p\geq p_0'$ and $r+\|\bm{q}\|\leq\rho_0'p$.
We set $\rho_0\coloneqq \rho_0'/2$, and choose a sufficiently large $p_0(\varepsilon)\geq\max\{p_0',1/\rho_0\}$ such that
\begin{align}
&I(A)p^{\dim X}e^{-p\varepsilon_0/2}\leq 1, \label{eqn: inclusion Zhang}\\
&1+I(A)p^{\dim X}e^{-p\varepsilon_0/2}\leq e^{p\varepsilon}, \label{eqn: inclusion hypothesis 1}
\end{align}
and
\begin{equation}
e^{-p\varepsilon}+I(A)p^{\dim X}e^{-p\varepsilon_0/2}\leq 1 \label{eqn: inclusion hypothesis 2}
\end{equation}
for every $p\geq p_0(\varepsilon)$.

As
\[
\widehat{\Gamma}^{\rm f}\left(p\overline{A};(r+1)Y+\bm{q}\cdot\bm{E}\right)=\widehat{\Gamma}^{\rm f}\left(p\overline{A};rY+\bm{q}\cdot\bm{E}\right)\cap H^0(pA-(r+1)Y-\bm{q}\cdot\bm{E})
\]
under the natural inclusion $H^0(pA-(r+1)Y-\bm{q}\cdot\bm{E})\subset H^0(pA-rY-\bm{q}\cdot\bm{E})$, we obtain the exact sequence
\[
0\to \widehat{\Gamma}^{\rm f}\left(p\overline{A};(r+1)Y+\bm{q}\cdot\bm{E}\right)\to \widehat{\Gamma}^{\rm f}\left(p\overline{A};rY+\bm{q}\cdot\bm{E}\right)\xrightarrow{\pi} \widehat{\Gamma}_{X|Y}^{\rm f}\left(p\overline{A};rY+\bm{q}\cdot\bm{E}\right)\to 0
\]
of $\ZZ$-modules.
Let $e_1^{p,\bm{q},r}$, $\dots$, $e_M^{p,\bm{q},r}\in\widehat{\Gamma}^{\rm ss}\left(p\left(\overline{A}(-\varepsilon_0/2)\right);(r+1)Y+\bm{q}\cdot\bm{E}\right)$ be a $\ZZ$-basis for $\widehat{\Gamma}^{\rm f}\left(p\overline{A};(r+1)Y+\bm{q}\cdot\bm{E}\right)$ as in \cite[Lemma~1.7]{ZhangSurf} (see also \eqref{eqn: inclusion Zhang}), and take $\ZZ$-linearly independent sections $f_1^{p,\bm{q},r}$, $\dots$, $f_N^{p,\bm{q},r}\in\widehat{\Gamma}^{\rm f}\left(p\overline{A};rY+\bm{q}\cdot\bm{E}\right)$ such that $\pi(f_1^{p,\bm{q},r})$, $\dots$, $\pi(f_N^{p,\bm{q},r})$ are all nonzero and generate $\widehat{\Gamma}_{X|Y}^{\rm f}\left(p\overline{A};rY+\bm{q}\cdot\bm{E}\right)$.

Each section in $\widehat{\Gamma}^{\rm ss}_{\quot(X|Y)}\left(p\overline{A};rY+\bm{q}\cdot\bm{E}\right)$ is an image via $\pi$ of a section $s\in H^0(pA-rY-\bm{q}\cdot\bm{E})\otimes_{\QQ}\RR$ that can be written in the form
\[
s=\sum_{i=1}^M\alpha_ie_i^{p,\bm{q},r}+\sum_{j=1}^N\beta_jf_j^{p,\bm{q},r}
\]
($\alpha_i\in\RR$, $\beta_j\in\ZZ$) and that satisfies $\|s\|_{\infty,\sup}^{p\overline{A}}<1$.
As
\[
\left\|\sum_{i=1}^M\lfloor\alpha_i\rfloor e_i^{p,\bm{q},r}+\sum_{j=1}^N\beta_jf_j^{p,\bm{q},r}\right\|_{\infty,\sup}^{p\overline{A}}<1+I(A)p^{\dim X}e^{-p\varepsilon_0/2}\leq e^{p\varepsilon}
\]
by \eqref{eqn: inclusion hypothesis 1}, we have $\widehat{\Gamma}^{\rm ss}_{\quot(X|Y)}\left(p\overline{A};rY+\bm{q}\cdot\bm{E}\right)\subset\widehat{\Gamma}^{\rm ss}_{X|Y}\left(p\left(\overline{A}(\varepsilon)\right);rY+\bm{q}\cdot\bm{E}\right)$.

Similarly, each section in $\widehat{\Gamma}^{\rm ss}_{\quot(X|Y)}\left(p\left(\overline{A}(-\varepsilon)\right);rY+\bm{q}\cdot\bm{E}\right)$ is an image via $\pi$ of a section $s'\in H^0(pA-rY-\bm{q}\cdot\bm{E})\otimes_{\QQ}\RR$ that can be written in the form
\[
s'=\sum_{i=1}^M\alpha_ie_i^{p,\bm{q},r}+\sum_{j=1}^N\beta_jf_j^{p,\bm{q},r}
\]
($\alpha_i\in\RR$, $\beta_j\in\ZZ$) and that satisfies $\|s'\|_{\infty,\sup}^{p\overline{A}}<e^{-p\varepsilon}$.
As
\[
\left\|\sum_{i=1}^M\lfloor\alpha_i\rfloor e_i^{p,\bm{q},r}+\sum_{j=1}^N\beta_jf_j^{p,\bm{q},r}\right\|_{\infty,\sup}^{p\overline{A}}<e^{-p\varepsilon}+I(A)p^{\dim X}e^{-p\varepsilon_0/2}\leq 1
\]
by \eqref{eqn: inclusion hypothesis 2}, we have $\widehat{\Gamma}^{\rm ss}_{\quot(X|Y)}\left(p\left(\overline{A}(-\varepsilon)\right);rY+\bm{q}\cdot\bm{E}\right)\subset\widehat{\Gamma}^{\rm ss}_{X|Y}\left(p\overline{A};rY+\bm{q}\cdot\bm{E}\right)$ as desired.
\end{proof}

\begin{lemma}\label{lem:construction_of_the_metric}
Let $\mathscr{X}$ be a normal, projective, and arithmetic variety, let $\mathscr{Y}$ be a reduced closed subscheme of $\mathscr{X}$, and let $\overline{\mathscr{D}}\coloneqq\left(\mathscr{D},g^{\overline{\mathscr{D}}}\right)$ be an arithmetic Cartier divisor on $\mathscr{X}$, and let $\bm{\mathscr{E}}=\left(\mathscr{E}_1,\dots,\mathscr{E}_l\right)$ be a family of effective Cartier divisors on $\mathscr{X}$.
There then exist another $\mathscr{D}$-Green function $g^{\overline{\mathscr{D}}'}$ and an $\mathscr{E}_i$-Green function $g^{\overline{\mathscr{E}}_i'}$ on $\mathscr{X}_{\infty}^{\rm an}$ for each $i$ such that
\[
\Image\left(\widehat{\Gamma}^{\rm s}\left(p\overline{\mathscr{D}};\bm{q}\cdot\pmb{\mathscr{E}}\right)\to H^0\left(\left.\OO_{\mathscr{X}}\left(p\mathscr{D}-\bm{q}\cdot\pmb{\mathscr{E}}\right)\right|_{\mathscr{Y}}\right)\right)\subset \widehat{\Gamma}^{\rm s}\left(\left.\OO_{\mathscr{X}}\left(p\overline{\mathscr{D}}'-\bm{q}\cdot\overline{\pmb{\mathscr{E}}}'\right)\right|_{\mathscr{Y}}\right)
\]
for every $(p,\bm{q})\in(\ZZ_{\geq 0})^{l+1}$, where $\overline{\mathscr{D}}'\coloneqq \left(\mathscr{D},g^{\overline{\mathscr{D}}'}\right)$ and $
\overline{\pmb{\mathscr{E}}}'\coloneqq\left(\left(\mathscr{E}_1,g^{\overline{\mathscr{E}}_1'}\right),\dots,\left(\mathscr{E}_l,g^{\overline{\mathscr{E}}_l'}\right)\right)$.
\end{lemma}

\begin{proof}
Given any continuous and nonnegative function $f$ on $\mathscr{X}_{\infty}^{\rm an}$ that is invariant under the complex conjugation, we have
\[
\widehat{\Gamma}^{\rm s}\left(p\overline{\mathscr{D}};\bm{q}\cdot\pmb{\mathscr{E}}\right)\subset\widehat{\Gamma}^{\rm s}\left(p\left(\overline{\mathscr{D}}(f)\right);\bm{q}\cdot\pmb{\mathscr{E}}\right)
\]
for any $(p,\bm{q})\in(\ZZ_{\geq 0})^{l+1}$ (see Notation and terminology~\ref{NC:adelic}).
Hence, we can assume that $\overline{\mathscr{D}}$ is of $C^{\infty}$-type.
Let $1_{\mathscr{E}_i}$ denote the canonical section of $\OO_{\mathscr{X}}(\mathscr{E}_i)$.
We endow each $\mathscr{E}_i$ with any $\mathscr{E}_i$-Green function $g^{\overline{\mathscr{E}_i}}$ of $C^{\infty}$-type, and consider the family $\overline{\pmb{\mathscr{E}}}\coloneqq \left(\overline{\mathscr{E}_1},\dots,\overline{\mathscr{E}_l}\right)$ of arithmetic Cartier divisors of $C^{\infty}$-type.
We choose a suitable real number $\alpha$ such that $0<\alpha<1$ and such that the open subset
\[
U\coloneqq\left\{x\in \mathscr{X}_{\infty}^{\rm an}\,\colon \,\text{$\left|1_{\mathscr{E}_i}\right|_{\infty}^{\overline{\mathscr{E}_i}}(x)>\alpha$ for every $i$ }\right\}
\]
is nonempty.
By \cite[Lemma~1.1.4]{MoriwakiCont}, there exists a constant $C>1$, depending only on $\overline{\mathscr{D}}$, $\overline{\pmb{\mathscr{E}}}$, $U$, and $\mathscr{X}$, such that
\[
C^{-(p+\|\bm{q}\|)}\|\phi\|_{\infty,\sup}^{p\overline{\mathscr{D}}-\bm{q}\cdot\overline{\pmb{\mathscr{E}}}}\leq\sup_{x\in U}\left\{|\phi|_{\infty}^{p\overline{\mathscr{D}}-\bm{q}\cdot\overline{\pmb{\mathscr{E}}}}(x)\right\}
\]
holds for any $\phi\in H^0\left(p\mathscr{D}-\bm{q}\cdot\pmb{\mathscr{E}}\right)\otimes_{\ZZ}\RR$.
Hence
\begin{align}
\left\|\phi\cdot 1_{\mathscr{E}_1}^{q_1}\cdots 1_{\mathscr{E}_l}^{q_l}\right\|_{\infty,\sup}^{p\overline{\mathscr{D}}} &\geq\sup_{x\in U}\left\{\left|\phi\cdot 1_{\mathscr{E}_1}^{q_1}\cdots 1_{\mathscr{E}_l}^{q_l}\right|_{\infty}^{p\overline{\mathscr{D}}}(x)\right\} \nonumber\\
&\geq\alpha^{\|\bm{q}\|}\cdot\sup_{x\in U}\left\{|\phi|_{\infty}^{p\overline{\mathscr{E}}-\bm{q}\cdot\overline{\pmb{\mathscr{E}}}}(x)\right\} \nonumber\\
&\geq \left(\alpha^{-1}C\right)^{-(p+\|\bm{q}\|)}\cdot\|\phi\|_{\infty,\sup}^{p\overline{\mathscr{D}}-\bm{q}\cdot\overline{\pmb{\mathscr{E}}}} \label{eqn: Construction of the metric}
\end{align}
for any $\phi\in H^0\left(p\mathscr{D}-\bm{q}\cdot\pmb{\mathscr{E}}\right)\otimes_{\ZZ}\RR$.
If we set $g^{\overline{\mathscr{D}}'}\coloneqq g^{\overline{\mathscr{D}}}+\log\left(\alpha^{-1}C\right)$ and $g^{\overline{\mathscr{E}_i}'}\coloneqq g^{\overline{\mathscr{E}_i}}-\log\left(\alpha^{-1}C\right)$ for each $i$, then
\[
\widehat{\Gamma}^{\rm s}\left(p\overline{\mathscr{D}};\bm{q}\cdot\pmb{\mathscr{E}}\right)\subset\widehat{\Gamma}^{\rm s}\left(p\overline{\mathscr{D}}'-\bm{q}\cdot\overline{\pmb{\mathscr{E}}}'\right).
\]
\end{proof}

\subsection{Yuan's estimates}\label{subsec:YuansEst}

Given $X$, $Y$ as in the previous subsection, and given a pair $\left(\overline{D},E\right)\in\aDDiv_{\ZZ,\ZZ}(X)$, we choose a model $\mathscr{X}$ (respectively, $\mathscr{Y}$) of $X$ (respectively, $Y$) and an arithmetic Cartier divisor $\overline{\mathscr{M}}$ on $\mathscr{X}$ in the following way:

By \cite[Theorem~4.1.3]{MoriwakiAdelic}, we can find a normal and projective $O_K$-model $\mathscr{X}$ of $X$ and a pair $\left(\overline{\mathscr{D}};\mathscr{E}\right)$ consisting of an arithmetic Cartier divisor $\overline{\mathscr{D}}$ on $\mathscr{X}$ and a horizontal Cartier divisor $\mathscr{E}$ on $\mathscr{X}$ such that
\[
\left(\overline{D};E\right)\leq_Y\left(\overline{\mathscr{D}};\mathscr{E}\right)^{\rm ad}
\]
(see Notation and terminology~\ref{NC:adelic}).
Let $\mathscr{Y}$ be the Zariski closure of $Y$ in $\mathscr{X}$.
By Lemma~\ref{lem:construction_of_the_metric}, there exists an arithmetic Cartier divisor $\overline{\mathscr{M}}$ on $\mathscr{X}$ such that
\begin{equation}
\widehat{\Gamma}^{\rm f}_{X|Y}\left(m\overline{D};mE\right)\subset H^0\left(\left.\mathcal{O}_{\mathscr{X}}\left(m\mathscr{M}\right)\right|_{\mathscr{Y}}\right)
\end{equation}
and
\begin{equation}
\widehat{\Gamma}^{\rm ss}_{X|Y}\left(m\overline{D};mE\right)\subset\widehat{\Gamma}^{\rm s}\left(\left.\mathcal{O}_{\mathscr{X}}\left(m\overline{\mathscr{M}}\right)\right|_{\mathscr{Y}}\right)
\end{equation}
for any $m\in\ZZ_{\geq 0}$.
In the rest of this subsection, we fix any triplet $\left(\mathscr{X},\mathscr{Y},\overline{\mathscr{M}}\right)$ satisfying the above conditions.

\begin{remark}
In the terminology of \cite{MoriwakiEst}, $\widehat{\Gamma}^{\rm ss}_{X|Y}\left(m\overline{D};mE\right)$ is an \emph{arithmetic linear series of $\left.\mathcal{O}_{\mathscr{X}}\left(m\overline{\mathscr{M}}\right)\right|_{\mathscr{Y}}$}.
\end{remark}

\begin{definition}\label{defn: good flag}
Let $\mathscr{Z}$ be a projective arithmetic variety. A \emph{good flag on $\mathscr{Z}$ over a prime number $p$} is a flag
\[
\mathscr{F}_{\geq 1}\colon \mathscr{Z}=\mathscr{F}_1\supset\mathscr{F}_2\supset\cdots\supset\mathscr{F}_{\dim\mathscr{Y}+1}
\]
on $\mathscr{Z}$ (see Notation and terminology~\ref{NC:flag}) such that the condition ($\ast$) below is satisfied.
\begin{enumerate}
\item[($\ast$)] Let $\pi\colon \mathscr{Z}\to\Spec(R)$ denote a Stein factorization of the structure morphism of $\mathscr{Z}$.
There exists a prime ideal $\mathfrak{p}$ of $R$ having the following properties:
\begin{enumerate}
\item[(a)] $R_{\mathfrak{p}}$ is a discrete valuation ring.
\item[(b)] $\mathfrak{p}\cap\ZZ=p\ZZ$ and $\FF_p\to R/\mathfrak{p}R$ is isomorphic.
\item[(c)] $\mathscr{F}_2=\pi^{-1}(\mathfrak{p})$.
\item[(d)] The closed point $\mathscr{F}_{\dim\mathscr{Y}+1}$ is regular and $\FF_p$-rational.
\end{enumerate}
\end{enumerate}
\end{definition}

Moreover, we introduce the following constants, which will be used throughout this paper.

\begin{definition}\label{defn:BasicConstants}
\begin{enumerate}
\item Given an $\RR$-Cartier divisor $N$ on $X$, we set
\begin{equation}\label{eqn:def_of_ID}
I(N)\coloneqq \sup_{m\in\ZZ_{\geq 1}}\left\{\frac{\rk_KH^0(mN)}{m^{\dim X}}\right\}.
\end{equation}
\item Given any adelic $\RR$-Cartier divisor $\overline{N}$ on $X$, we set
\begin{equation}
\delta\left(\overline{N}\right)\coloneqq \inf_{\overline{A}}\left\{\frac{\adeg\left(\overline{N}\cdot\overline{A}^{\cdot\dim X}\right)}{\vol(A)}\right\},
\end{equation}
where the infimum is taken over all nef adelic $\RR$-Cartier divisors $\overline{A}$ on $X$ such that $\vol(A)$ is positive.
\item Given an adelic $\RR$-Cartier divisor $\overline{N}$ on $X$, a prime number $p$, and an $m\in\ZZ_{\geq 0}$, we set
\begin{align}
&C\left(\overline{N},X,p,m\right)\coloneqq [K:\QQ]I(N) \nonumber\\
&\qquad\qquad \times\left(\log(4)\delta\left(\overline{N}\right)+\frac{\log(4p)\log\left(4pI(N)m^{\dim X}\right)}{m}\right).
\end{align}
\item Given an adelic $\RR$-Cartier divisor $\overline{N}$ on $X$, we set
\begin{equation}\label{eqn:def_of_Cprime}
C'\left(\overline{N},X\right)\coloneqq [K:\QQ]I(N)\delta\left(\overline{N}\right)\log(4).
\end{equation}
\end{enumerate}
\end{definition}

A result of Yuan and Moriwaki \cite{Yuan09,MoriwakiEst,IkomaRem}\footnote{In \cite[Theorem 6.7]{IkomaRem}, ``any symmetric CL-subset of $\widehat{\Gamma}^{\rm f}(\overline{L})$'' should be read as ``any symmetric CL-subset of $\widehat{\Gamma}^{\rm s}(\overline{L})$'' and, in \cite[Definition~7.2, Propositions~7.9, and 7.10]{IkomaRem}, ``$\widehat{\Gamma}^{\rm ss}_{X|Y}(m\overline{L})$'' should be read as ``$\widehat{CL}_{X|Y}(m\overline{L})$''. The author apologizes for any inconvenience.} then asserts the following:

\begin{theorem}\label{thm:YuanMoriwaki}
Let $X$ be a normal, projective, and geometrically connected $K$-variety, let $Y$ be a closed subvariety of $X$, and let $\left(\overline{D};E\right)\in\aDDiv_{\ZZ,\ZZ}(X)$ with $E\geq 0$.
Let $?$ denotes either $CL(X|Y)$ or $\quot(X|Y)$.
Choose a model $\left(\mathscr{X},\mathscr{Y},\overline{\mathscr{M}}\right)$ as above, and let $\mathscr{F}_{\geq 1}$ be a good flag on $\mathscr{Y}$ over a prime number $p$.
If $\widehat{\Gamma}^{\rm ss}_{?}\left(m\overline{D};mE\right)\neq\{0\}$, then
\begin{align*}
&\left|\ahss_{?}\left(m\overline{D};mE\right)-\#\bm{w}_{\mathscr{F}_{\geq 1}}\left(\widehat{\Gamma}^{\rm ss}_{?}\left(m\overline{D};mE\right)\setminus\{0\}\right)\log(p)\right| \\
&\qquad\qquad\qquad\qquad\qquad\qquad\qquad\quad \leq \frac{C\left(\left.\overline{\mathscr{M}}^{\rm ad}\right|_Y,Y,p,m\right)}{\log(p)}m^{\dim Y+1}
\end{align*}
for any $m\in\ZZ_{\geq 0}$.
\end{theorem}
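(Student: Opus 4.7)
The plan is to follow the strategy of Yuan \cite{Yuan09} and Moriwaki \cite{MoriwakiEst}: use the good-flag valuation $\bm{w}_{\mathscr{F}_{\geq 1}}$ to turn the counting of strictly small sections into a counting of integer vectors, and exploit the $\FF_p$-rationality of the closed point $\mathscr{F}_{\dim\mathscr{Y}+1}$ to show that each graded piece of an associated filtration is one-dimensional over $\FF_p$, which is the source of the factor $\log(p)$ per valuation class.

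First I would reduce the problem to the model. By the choice of $(\mathscr{X},\mathscr{Y},\overline{\mathscr{M}})$ fixed just before the theorem, we have the containment
\[
\widehat{\Gamma}^{\rm ss}_{X|Y}\left(m\overline{D};mE\right)\;\subset\;\widehat{\Gamma}^{\rm s}\bigl(\OO_{\mathscr{X}}(m\overline{\mathscr{M}})|_{\mathscr{Y}}\bigr),
\]
together with the analogous inclusions for the CL-hull and quotient variants; hence the analysis can be carried out inside the restricted adelic lattice $H^0(\OO_{\mathscr{X}}(m\mathscr{M})|_{\mathscr{Y}})$ equipped with the supremum norms attached to $\overline{\mathscr{M}}|_{\mathscr{Y}}$. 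Next I would list the image of $\bm{w}_{\mathscr{F}_{\geq 1}}$ in decreasing lex order as $w_1>w_2>\cdots>w_N$, and define the decreasing $\QQ$-filtration $V_n$ spanned by the sections $\phi$ with $\bm{w}_{\mathscr{F}_{\geq 1}}(\phi)\geq w_n$. By the good-flag conditions (a)--(d) of Definition~\ref{defn: good flag}, after stripping off $w_n$ using the local equations $f_i$ of $\mathscr{F}_i$ in $\mathscr{F}_{i-1}$, the leading value of any $\phi\in V_n\setminus V_{n+1}$ lies in the residue field of the regular $\FF_p$-rational point $\mathscr{F}_{\dim\mathscr{Y}+1}$; hence each graded piece $V_n/V_{n+1}$ is at most one-dimensional over $\FF_p$, and in particular $\#\,r_n\bigl(V_n\cap\widehat{\Gamma}^{\rm ss}_?(m\overline{D};mE)\bigr)\leq p$, with equality contributing exactly $\log p$ to the sum.

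For the lower bound on $\ahss_?(m\overline{D};mE)$ I would apply Lemma~\ref{lem:filtration_est} with $l=N$ to this filtration and absorb the $\log(N)$ rescaling via Remark~\ref{rem:Yuan_rescale}, yielding
\[
\ahss_?\left(m\overline{D};mE\right)\;\geq\; N\log p - \bigl(\log(3N)\bigr)\rk V.
\]
For the upper bound I would iterate Lemma~\ref{lem:quot_exact} on the short exact sequences $0\to V_{n+1}\to V_n\to V_n/V_{n+1}\to 0$, producing the reverse inequality with an analogous error. Standard growth estimates for restricted linear series on an arithmetic model give $\rk V = O(I(M|_Y)\, m^{\dim Y})$ and $\log N\leq \log(I(M|_Y)\,m^{\dim Y})$, and these combine into the contribution $\log(4p I(N)m^{\dim X})/m$ inside $C(\overline{\mathscr{M}}^{\rm ad}|_Y, Y, p, m)$. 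The remaining $\log(4)\delta(\overline{N})$ piece of the constant arises from comparing $\widehat{\Gamma}^{\rm ss}_?$ with $\widehat{\Gamma}^{\rm s}_?$ through a rescaling of the Archimedean norm at the scale of the minimum height of sections, which in view of the definition of $\delta$ is bounded below by $\delta(\overline{M}|_Y)\,m$.

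The main obstacle is the careful bookkeeping required to land exactly on the constant $C(\overline{\mathscr{M}}^{\rm ad}|_Y,Y,p,m)/\log(p)\cdot m^{\dim Y+1}$: the factors $[K:\QQ]$, $I(N)$, $\delta(\overline{N})$, $\log(4)$, $\log(4p)$, and the $1/m$ asymptotic term each come from a distinct reduction step (base change to $\QQ$, rank growth of the restricted linear series on the model, Arakelov height growth, and the repeated factor-of-$2$ losses built into Lemmas~\ref{lem:filtration_est} and \ref{lem:quot_exact}). Conceptually, the $\FF_p$-rationality of $\mathscr{F}_{\dim\mathscr{Y}+1}$ is the essential geometric input that pins each graded piece to size $p$; the rest is a careful assembly of the lemmas of the previous subsection, along the lines of \cite{Yuan09,MoriwakiEst,IkomaRem}.
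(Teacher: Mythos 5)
Your proposal re-derives the estimate from first principles in the style of Yuan--Moriwaki, but this is not what the paper does, and your sketch omits the one nontrivial step the paper's proof actually performs. The paper's proof is a reduction-plus-citation: it passes to the relative normalization $\nu:\mathscr{Y}'\to\mathscr{Y}$ of $\mathscr{Y}$ in $Y$, pulls back the flag to $\nu_*^{-1}\mathscr{F}_{\geq 1}$ (using that $\nu$ is an isomorphism near the closed point $\mathscr{F}_{\dim\mathscr{Y}+1}$), and then invokes \cite[Theorem~2.2]{MoriwakiEst}. The normalization is the real content here: $\mathscr{Y}$ is the Zariski closure of $Y$ in $\mathscr{X}$ and need not be normal, and both Moriwaki's Theorem~2.2 and the level-by-level flag analysis you describe (stripping local equations and reading off a residue in $\FF_p$ at $\mathscr{F}_{\dim\mathscr{Y}+1}$) are formulated for a normal arithmetic variety carrying a good flag. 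You never say how to deal with a non-normal $\mathscr{Y}$. Relatedly, the phrase ``each graded piece $V_n/V_{n+1}$ is at most one-dimensional over $\FF_p$'' is imprecise: those quotients are $K$-vector spaces, and what is actually bounded by $p$ is the cardinality of the image of the lattice (or CL-) subset of sections under the quotient map, not the dimension of $V_n/V_{n+1}$.

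A secondary concern is the constant. You propose to run the bookkeeping through Lemmas~\ref{lem:filtration_est} and \ref{lem:quot_exact}, but those are new lemmas this paper introduces specifically for Theorems~\ref{thm:Estimates II} and \ref{thm: general lower bound}, not the inequalities underlying \cite[Theorem~2.2]{MoriwakiEst}, and there is no reason to expect them to reproduce the exact expression $C\bigl(\left.\overline{\mathscr{M}}^{\rm ad}\right|_Y,Y,p,m\bigr)/\log(p)$ that the theorem statement inherits verbatim from Moriwaki. You already flag this risk; for the downstream use (Corollary~\ref{cor:YuanMoriwaki} only needs the $m\to\infty$ bound with $C'$) a weaker constant might be tolerable, but as a proof of the theorem as stated it leaves the constant unjustified. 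The clean fix is to follow the paper: normalize $\mathscr{Y}$, pull back the flag, and cite \cite[Theorem~2.2]{MoriwakiEst}.
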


\begin{proof}
Let $\nu\colon \mathscr{Y}'\to\mathscr{Y}$ be the relative normalization of $\mathscr{Y}$ in $Y$ and let $\nu_*^{-1}\mathscr{F}_{\geq 1}$ be the pullback of $\mathscr{F}_{\geq 1}$ via $\nu$ (see \cite[Lemma~6.3]{IkomaRem}), where we note that $\nu$ is isomorphic around the closed point $\mathscr{F}_{\dim\mathscr{Y}+1}$.
Then we obtain the result by applying \cite[Theorem~2.2]{MoriwakiEst} to $\mathscr{Y}'$, $\nu_*^{-1}\mathscr{F}_{\geq 1}$, $\left.\mathcal{O}_{\mathscr{X}}\left(\overline{\mathscr{M}}\right)\right|_{\mathscr{Y}'}$, and $\widehat{\Gamma}^{\rm ss}_?\left(m\overline{D};mE\right)$ for each $m\in\ZZ_{\geq 0}$ (see also \cite[Theorem~6.6]{IkomaRem}).
\end{proof}

\begin{corollary}[{\cite[Corollary~2.3]{MoriwakiEst}}]\label{cor:YuanMoriwaki}
We use the same notation as in Theorem~\ref{thm:YuanMoriwaki}.
We have
\begin{align*}
&\limsup_{m\to\infty}\left|\frac{\ahss_{?}\left(m\overline{D};mE\right)}{m^{\dim Y+1}}-\frac{\#\bm{w}_{\mathscr{F}_{\geq 1}}\left(\widehat{\Gamma}^{\rm ss}_{?}\left(m\overline{D};mE\right)\setminus\{0\}\right)\log(p)}{m^{\dim Y+1}}\right| \\
&\qquad\qquad\qquad\qquad\qquad\qquad\qquad\qquad\qquad\qquad\quad \leq\frac{C'\left(\left.\overline{\mathscr{M}}^{\rm ad}\right|_Y,Y\right)}{\log(p)}.
\end{align*}
\end{corollary}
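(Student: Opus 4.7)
The plan is to deduce the corollary directly from Theorem~\ref{thm:YuanMoriwaki} by dividing the stated inequality through by $m^{\dim Y+1}$ and then taking $\limsup_{m\to\infty}$, so the only substantive task is to compute the limit (or at worst the limsup) of the quantity $C\bigl(\overline{\mathscr{M}}^{\rm ad}|_Y,Y,p,m\bigr)$ as $m\to\infty$ and verify that it equals $C'\bigl(\overline{\mathscr{M}}^{\rm ad}|_Y,Y\bigr)$.

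First, I would divide both sides of the inequality in Theorem~\ref{thm:YuanMoriwaki} by $m^{\dim Y+1}$, obtaining for every $m\in\ZZ_{\geq 0}$ the pointwise bound
\[
\left|\frac{\ahss_{?}(m\overline{D};mE)}{m^{\dim Y+1}}-\frac{\#\bm{w}_{\mathscr{F}_{\geq 1}}\bigl(\widehat{\Gamma}^{\rm ss}_{?}(m\overline{D};mE)\setminus\{0\}\bigr)\log(p)}{m^{\dim Y+1}}\right|\leq\frac{C\bigl(\overline{\mathscr{M}}^{\rm ad}|_Y,Y,p,m\bigr)}{\log(p)}.
\]
Since $\limsup_{m\to\infty}|a_m-b_m|\leq\limsup_{m\to\infty}c_m$ whenever $|a_m-b_m|\leq c_m$, it remains to control the right-hand side.

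Next, I would unwind the definition
\[
C\bigl(\overline{N},X,p,m\bigr)=[K:\QQ]I(N)\left(\log(4)\delta(\overline{N})+\frac{\log(4p)\log\bigl(4pI(N)m^{\dim X}\bigr)}{m}\right),
\]
and observe that the first summand inside the parentheses is independent of $m$, while the second summand is of the form $O\bigl(\log(m)/m\bigr)$ and hence tends to $0$ as $m\to\infty$. Therefore
\[
\lim_{m\to\infty}C\bigl(\overline{\mathscr{M}}^{\rm ad}|_Y,Y,p,m\bigr)=[K:\QQ]I\bigl(\mathscr{M}^{\rm ad}|_Y\bigr)\log(4)\delta\bigl(\overline{\mathscr{M}}^{\rm ad}|_Y\bigr)=C'\bigl(\overline{\mathscr{M}}^{\rm ad}|_Y,Y\bigr),
\]
and dividing by $\log(p)$ yields the desired bound on the limsup.

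There is essentially no obstacle here; this is a direct passage from the effective form of the comparison (Theorem~\ref{thm:YuanMoriwaki}) to its asymptotic consequence. The only mild subtlety is to check that the factor $I(N)m^{\dim X}$ inside the logarithm really does produce only a $\log(m)/m$ contribution (rather than something worse), which is clear since $\log(4pI(N)m^{\dim X})=\log(4pI(N))+\dim X\cdot\log(m)$ and dividing by $m$ gives a null sequence. No additional machinery beyond Theorem~\ref{thm:YuanMoriwaki} and the definitions of $C$ and $C'$ is required.
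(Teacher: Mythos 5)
Your proposal is correct and is precisely the natural deduction the paper intends (the paper gives no proof, citing Moriwaki's Corollary~2.3 instead): divide the bound in Theorem~\ref{thm:YuanMoriwaki} by $m^{\dim Y+1}$, take $\limsup$, and observe that in $C\bigl(\overline{\mathscr{M}}^{\rm ad}|_Y,Y,p,m\bigr)$ the term $\log(4p)\log\bigl(4pI(\cdot)m^{\dim Y}\bigr)/m$ tends to $0$, leaving $C'\bigl(\overline{\mathscr{M}}^{\rm ad}|_Y,Y\bigr)$. (Minor notational slip: in the instantiated constant the exponent inside the logarithm is $\dim Y$, not $\dim X$, but this does not affect the limit.)
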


\begin{definition}\label{defn: arithmetic NO bodies 1}
Let $?$ denote either $CL(X|Y)$ or $\quot(X|Y)$, and let $\left(\overline{D};E\right)\in\aDDiv_{\ZZ,\ZZ}(X)$ with $E\geq 0$.
Then we set
\[
\widehat{\Delta}_{?}^{\mathscr{F}_{\geq 1}}\left(\overline{D};E\right)\coloneqq \overline{\left(\bigcup_{m\in\ZZ_{\geq 1}}\frac{1}{m}\bm{w}_{\mathscr{F}_{\geq 1}}\left(\widehat{\Gamma}^{\rm ss}_{?}\left(m\overline{D};mE\right)\setminus\{0\}\right)\right)}.
\]
More generally, given any $\left(\overline{D};E\right)\in\aDDiv_{\QQ,\QQ}(X)$, we set
\[
\widehat{\Delta}_{?}^{\mathscr{F}_{\geq 1}}\left(\overline{D};E\right)\coloneqq \frac{1}{n}\widehat{\Delta}_{?}^{\mathscr{F}_{\geq 1}}\left(n\overline{D};nE\right),
\]
where $n$ denotes any positive integer such that $\left(n\overline{D};nE\right)\in\aDDiv_{\ZZ,\ZZ}(X)$.

If $\left(\overline{D};E\right)$ is $Y$-big, then the same arguments as in \cite[Propositions~5.1 and 5.2]{MoriwakiEst} (see also \cite[Proposition~7.7]{IkomaRem}) will lead to
\begin{equation}\label{eqn: defn: arithmetic NO bodies 1}
\vol_{\RR^{\dim Y+1}}\left(\widehat{\Delta}_{?}^{\mathscr{F}_{\geq 1}}\left(\overline{D};E\right)\right)=\lim_{\substack{m\in\ZZ_{\geq 1}, \\ m\to\infty}}\frac{\#\bm{w}_{\mathscr{F}_{\geq 1}}\left(\widehat{\Gamma}^{\rm ss}_{?}\left(m\overline{D};mE\right)\setminus\{0\}\right)}{m^{\dim Y+1}} \in\RR_{>0}.
\end{equation}
\end{definition}

\begin{corollary}\label{cor:Yuan_main}
Let $X$ be a normal, projective, and geometrically connected $K$-variety, let $Y$ be a closed subvariety of $X$, and let $\left(\overline{D};E\right),\left(\overline{D}';E'\right)\in\aDDiv_{\QQ,\QQ}(X)$ be $Y$-big pairs on $X$ with $\min\{E,E'\}\geq 0$.
Let $?$ denote either $X|Y$ or $\quot(X|Y)$.
\begin{enumerate}
\item If $\left(\overline{D};E\right)\in\aDDiv_{\ZZ,\ZZ}(X)$, then the sequence
\[
\left(\frac{\ahss_{CL(X|Y)}\left(m\overline{D};mE\right)}{m^{\dim Y+1}/(\dim Y+1)!}\right)_{m\in\ZZ_{\geq 1}}\qquad \text{(respectively, }\quad \left(\frac{\ahss_{\quot(X|Y)}\left(m\overline{D};mE\right)}{m^{\dim Y+1}/(\dim Y+1)!}\right)_{m\in\ZZ_{\geq 1}}\text{ )}
\]
converges to $\avol_{X|Y}\left(\overline{D};E\right)$ (respectively, $\avol_{\quot(X|Y)}\left(\overline{D};E\right)$).
\item\label{item: cor Yuan main homogeneity} If $\left(\overline{D};E\right)\in\aDDiv_{\ZZ,\ZZ}(X)$, then
\[
 \avol_{?}\left(a\overline{D};aE\right)=a^{\dim Y+1}\cdot\avol_{?}\left(\overline{D};E\right)
\]
for every $a\in\ZZ_{\geq 1}$.
In particular, we can define $\avolq{?}\left(\overline{D};E\right)$ for any $Y$-big pair $\left(\overline{D};E\right)\in\aDDiv_{\QQ,\QQ}(X)$.
\item\label{item: cor Yuan main BM} The Brunn--Minkowski inequality holds true for arithmetic restricted volumes:
\begin{align*}
 &\avol_{?}\left(\overline{D}+\overline{D}';E+E'\right)^{1/(\dim Y+1)} \\
 &\qquad\qquad\quad \geq\avol_{?}\left(\overline{D};E\right)^{1/(\dim Y+1)}+\avol_{?}\left(\overline{D}';E'\right)^{1/(\dim Y+1)}.
\end{align*}
\item Assume that $Y$ has codimension one in $X$.
Let $\overline{\bm{A}}\coloneqq\left(\overline{A}_1,\dots,\overline{A}_m\right)\in\aCDiv_{\QQ}(X)^{\times m}$, and let $\bm{B}\coloneqq\left(B_1,\dots,B_n\right)\in\Div_{\QQ}(X)^{\times n}$.
Assume that one of the following two conditions is satisfied.
\begin{enumerate}
\item[(a)] $\ord_Y(E)>0$.
\item[(b)] $\ord_Y(B_j)=0$ for every $j$.
\end{enumerate}
Then one has
\[
 \lim_{\substack{\pmb{\varepsilon}\in\QQ^m,\, \pmb{\delta}\in\QQ^n, \\ \pmb{\varepsilon},\,\pmb{\delta}\to 0}}\avolq{?}\left(\overline{D}+\pmb{\varepsilon}\cdot\overline{\bm{A}};E+\pmb{\delta}\cdot\bm{B}\right)=\avol_{?}\left(\overline{D};E\right).
\]
\end{enumerate}
\end{corollary}

\begin{proof}
Let $\mathscr{X}$, $\mathscr{Y}$, $\mathscr{F}_{\geq 1}$, $p$, and $\overline{\mathscr{M}}$ be as in Theorem~\ref{thm:YuanMoriwaki}.

(1): By \cite[Proposition~1.4.1]{MoriwakiEst}, there exist good flags on $\mathscr{Y}$ over infinitely many prime numbers.
Thus, given any $\varepsilon\in\RR_{>0}$, we can find a prime number $p$ such that there exists a good flag on $\mathscr{Y}$ over $p$ and
\[
 \frac{C'\left(\left.\overline{\mathscr{M}}^{\rm ad}\right|_Y,Y\right)}{\log(p)}\leq\varepsilon.
\]
Thus, by Corollary~\ref{cor:YuanMoriwaki}, we obtain
\begin{align*}
&0\leq\limsup_{\substack{m\in\ZZ_{\geq 1}, \\ m\to\infty}}\frac{\ahss_{CL(X|Y)}\left(m\overline{D};mE\right)}{m^{\dim Y+1}}-\liminf_{\substack{m\in\ZZ_{\geq 1}, \\ m\to\infty}}\frac{\ahss_{CL(X|Y)}\left(m\overline{D};mE\right)}{m^{\dim Y+1}}\leq 2\varepsilon \\
\text{(respectively,}\quad &0\leq\limsup_{\substack{m\in\ZZ_{\geq 1}, \\ m\to\infty}}\frac{\ahss_{\quot(X|Y)}\left(m\overline{D};mE\right)}{m^{\dim Y+1}}-\liminf_{\substack{m\in\ZZ_{\geq 1}, \\ m\to\infty}}\frac{\ahss_{\quot(X|Y)}\left(m\overline{D};mE\right)}{m^{\dim Y+1}}\leq 2\varepsilon\quad\text{)}
\end{align*}
(see Definition~\ref{defn: arithmetic NO bodies 1}) and conclude the proof.

The assertion (2) is a consequence of the assertion (1).

(3): By Lemma~\ref{lem:construction_of_the_metric}, one can choose an arithmetic Cartier divisor $\overline{\mathscr{N}}$ on $\mathscr{X}$ such that
\begin{align*}
&\widehat{\Gamma}_{X|Y}^{\rm f}\left(m\overline{D};mE\right)\subset H^0\left(\left.\mathcal{O}_{\mathscr{X}}\left(m\mathscr{N}\right)\right|_{\mathscr{Y}}\right), \\
&\widehat{\Gamma}_{X|Y}^{\rm f}\left(m\overline{D}';mE'\right)\subset H^0\left(\left.\mathcal{O}_{\mathscr{X}}\left(m\mathscr{N}\right)\right|_{\mathscr{Y}}\right), \\
&\widehat{\Gamma}_{X|Y}^{\rm ss}\left(m\overline{D};mE\right)\subset\widehat{\Gamma}^{\rm s}\left(\left.\mathcal{O}_{\mathscr{X}}\left(m\overline{\mathscr{N}}\right)\right|_{\mathscr{Y}}\right),
\end{align*}
and
\[
\widehat{\Gamma}_{X|Y}^{\rm ss}\left(m\overline{D}';mE'\right)\subset\widehat{\Gamma}^{\rm s}\left(\left.\mathcal{O}_{\mathscr{X}}\left(m\overline{\mathscr{N}}\right)\right|_{\mathscr{Y}}\right)
\]
for all $m$.
Let $\varepsilon\in\RR_{>0}$.
By \cite[Proposition~1.4.1]{MoriwakiEst}, there exists a prime number $p$ such that there exists a good flag on $\mathscr{Y}$ over $p$ and such that
\begin{equation}
 \frac{C'\left(\left.\overline{\mathscr{N}}^{\rm ad}\right|_Y,Y\right)}{\log(p)}\leq\varepsilon.
\end{equation}
Applying the classical Brunn--Minkowski inequality to
\begin{align*}
&\widehat{\Delta}_{CL(X|Y)}^{\mathscr{F}_{\geq 1}}\left(\overline{D};E\right)+\widehat{\Delta}_{CL(X|Y)}^{\mathscr{F}_{\geq 1}}\left(\overline{D}';E'\right)\subset\widehat{\Delta}_{CL(X|Y)}^{\mathscr{F}_{\geq 1}}\left(\overline{D}+\overline{D}';E+E'\right) \\
\text{(respectively,}\quad &\widehat{\Delta}_{\quot(X|Y)}^{\mathscr{F}_{\geq 1}}\left(\overline{D};E\right)+\widehat{\Delta}_{\quot(X|Y)}^{\mathscr{F}_{\geq 1}}\left(\overline{D}';E'\right)\subset\widehat{\Delta}_{\quot(X|Y)}^{\mathscr{F}_{\geq 1}}\left(\overline{D}+\overline{D}';E+E'\right)\quad \text{),}
\end{align*}
we obtain, by Corollary~\ref{cor:YuanMoriwaki},
\begin{align*}
&\avolq{X|Y}\left(\overline{D}+\overline{D}';E+E'\right)^{1/(\dim Y+1)} \\
&\qquad\qquad \geq\avol_{X|Y}\left(\overline{D};E\right)^{1/(\dim Y+1)}+\avol_{X|Y}\left(\overline{D}';E'\right)^{1/(\dim Y+1)}-3\varepsilon \\
\text{(respectively,}\quad &\avolq{\quot(X|Y)}\left(\overline{D}+\overline{D}';E+E'\right)^{1/(\dim Y+1)} \\
& \geq\avol_{\quot(X|Y)}\left(\overline{D};E\right)^{1/(\dim Y+1)}+\avol_{\quot(X|Y)}\left(\overline{D}';E'\right)^{1/(\dim Y+1)}-3\varepsilon\quad\text{).}
\end{align*}

(4): Since the cone
\[
\left\{\left(\overline{D};E\right)\in\aDDiv_{\RR,\RR}(X)\,\colon \,\text{$\left(\overline{D};E\right)$ is $Y$-big and $\ord_Y(E)>0$}\right\}
\]
is open in $\aDDiv_{\RR,\RR}(X)$ (see \cite[Theorem~2.21(2)]{IkomaDiff1}), the case (a) follows from the assertion (3) and \cite[Theorem~5.2]{Ein_Laz_Mus_Nak_Pop06} (see also \cite[Proposition~1.3.1]{MoriwakiEst}).

To show the case (b), we may assume without loss of generality that $B_j$'s are all effective.
We endow each $B_j$ with an adelic $B_j$-Green function such that $\overline{B}_j$ is effective.
The assertion follows from the estimate
\begin{align*}
\avolq{?}\left(\overline{D}+\pmb{\varepsilon}\cdot\overline{\bm{A}}-\lvert\pmb{\delta}\rvert\cdot\overline{\bm{B}};E\right) &\leq\avolq{?}\left(\overline{D}+\pmb{\varepsilon}\cdot\overline{\bm{A}};E+\pmb{\delta}\cdot\bm{B}\right) \\
&\leq\avolq{?}\left(\overline{D}+\pmb{\varepsilon}\cdot\overline{\bm{A}}+\lvert\pmb{\delta}\rvert\cdot\overline{\bm{B}};E\right)
\end{align*}
(see Notation and terminology~\ref{NC:norm}).
\end{proof}

\begin{proposition}\label{prop: arv rescaling}
Let $\left(\overline{D};E\right)\in\aDDiv_{\QQ,\QQ}(X)$ be a $Y$-big pair on $X$, let $S\subset M_K$ be a finite subset, and let $\pmb{\gamma}\coloneqq (\gamma_v)_{v\in S}$ be a family of real numbers.
Then
\begin{align*}
&\left|\avol_{\quot(X|Y)}\left(\overline{D}+\left(0,\sum_{v\in S}\gamma_v[v]\right);E\right)-\avol_{\quot(X|Y)}\left(\overline{D};E\right)\right| \\
&\qquad\qquad\qquad\qquad\qquad \leq (\dim X+1)[K:\QQ]\vol_{X|Y}(D-E)\|\pmb{\gamma}\|.
\end{align*}
\end{proposition}

\begin{proof}
See \cite[Proposition~5.1.2]{MoriwakiAdelic}.
\end{proof}

\subsection{Arithmetic restricted positive intersection numbers}\label{subsec: arithmetic restricted positive intersection number}

Let $X$ be a normal, projective, and geometrically connected $K$-variety, let $Y$ be a closed subvariety of $X$, and let $\left(\overline{D};E\right)\in\aDDiv_{\RR,\RR}(X)$ be a $Y$-big pair.
A \emph{$Y$-approximation of $\left(\overline{D};E\right)$} is defined as a couple $\left(\pi\colon X'\to X,\overline{M}\right)$ consisting of a birational $K$-morphism of projective varieties $\pi\colon X'\to X$ and a nef adelic $\RR$-Cartier divisor $\overline{M}$ on $X'$ having the following properties:
\begin{enumerate}
\renewcommand{\labelenumi}{(\alph{enumi})}
\item $X'$ is smooth and $\pi$ is isomorphic around the generic point of $Y$.
\item Let $\pi_*^{-1}(Y)$ denote the strict transform of $Y$ via $\pi$.
Then $\overline{M}$ is $\pi_*^{-1}(Y)$-big and $\left(\pi^*\overline{D}-\overline{M};E\right)$ is $\pi_*^{-1}(Y)$-pseudo-effective.
\end{enumerate}
We denote the set of all $Y$-approximations of $\left(\overline{D};E\right)$ by $\widehat{\Theta}_Y\left(\overline{D};E\right)$.
Moreover, we set
\begin{align*}
&\widehat{\Theta}_Y^{\rm rw}\left(\overline{D};E\right) \nonumber\\
&\quad \coloneqq \left\{(\pi,\overline{M})\in\widehat{\Theta}_Y\left(\overline{D};E\right)\,\colon \,\begin{array}{l}\text{$\overline{M}\in\aDiv_{\QQ}(X')$, $\overline{M}$ is w-ample,} \\ \text{and $\left(\pi^*\overline{D}-\overline{M};E\right)$ is $\pi_*^{-1}(Y)$-big}\end{array}\right\}.
\end{align*}

We define the \emph{arithmetic restricted positive intersection number of $\left(\overline{D};E\right)$ along $Y$} as
\begin{equation}
\left.\left\langle\left(\overline{D};E\right)^{\cdot(\dim Y+1)}\right\rangle\right|_Y\coloneqq \sup_{(\pi,\overline{M})\in\aTheta_Y\left(\overline{D};E\right)}\left\{\adeg\left(\left(\overline{M}|_{\pi_*^{-1}(Y)}\right)^{\cdot(\dim Y+1)}\right)\right\},
\end{equation}
where the restriction of $\overline{M}$ is defined up to arithmetic $\RR$-linear equivalence.

\begin{remark}\label{rem:arpin}
\begin{enumerate}
\item An arithmetic restricted positive intersection number is actually given as a limit as in \cite[Proposition~4.4]{IkomaCon}.
\item\label{item: rw approximation arpin} The same arguments as in \cite[Proposition~3.9]{IkomaDiff1} will lead to
\[
\left.\left\langle\left(\overline{D};E\right)^{\cdot(\dim Y+1)}\right\rangle\right|_Y=\sup_{(\pi,\overline{M})\in\widehat{\Theta}_Y^{\rm rw}\left(\overline{D};E\right)}\left\{\adeg\left(\left(\overline{M}|_{\pi_*^{-1}(Y)}\right)^{\cdot(\dim Y+1)}\right)\right\}.
\]
\item If $\left(\overline{D}_1;E_1\right)\preceq_Y\left(\overline{D}_2;E_2\right)$, then
\[
\left.\left\langle\left(\overline{D}_1;E_1\right)^{\cdot(\dim Y+1)}\right\rangle\right|_Y\leq\left.\left\langle\left(\overline{D}_2;E_2\right)^{\cdot(\dim Y+1)}\right\rangle\right|_Y.
\]
\item\label{item: arpin homogeneity} For any $a\in\RR_{\geq 0}$, one has
\[
\left.\left\langle\left(a\overline{D};aE\right)^{\cdot(\dim Y+1)}\right\rangle\right|_Y=a^{\dim Y+1}\left.\left\langle\left(\overline{D};E\right)^{\cdot(\dim Y+1)}\right\rangle\right|_Y.
\]
\item\label{item: remark arpin} The arithmetic restricted positive intersection numbers fit into the Brunn--Minkowski inequality:
\begin{align*}
&\left.\left\langle\left(\overline{D}_1;E_1\right)^{\cdot(\dim Y+1)}\right\rangle\right|_Y^{1/(\dim Y+1)}+\left.\left\langle\left(\overline{D}_2;E_2\right)^{\cdot(\dim Y+1)}\right\rangle\right|_Y^{1/(\dim Y+1)} \\
&\qquad\qquad\qquad\qquad\qquad\qquad \leq \left.\left\langle\left(\overline{D}_1+\overline{D}_2;E_1+E_2\right)^{\cdot(\dim Y+1)}\right\rangle\right|_Y^{1/(\dim Y+1)}
\end{align*}
for any $Y$-big pairs $\left(\overline{D}_1;E_1\right),\left(\overline{D}_2;E_2\right)\in\aDDiv_{\RR,\RR}(X)$ (see for example \cite[Theorem 2.9(4)]{IkomaCon}).
\item\label{item: continuity arpin} Assume that $Y$ has codimension one in $X$.
Let $\left(\overline{D};E\right)\in\aDDiv_{\RR,\RR}(X)$ be a $Y$-big pair, let $\overline{\bm{D}}\coloneqq\left(\overline{D}_1,\dots,\overline{D}_m\right)\in\aDiv_{\RR}(X)^{\times m}$, and let $\bm{E}\coloneqq\left(E_1,\dots,E_n\right)\in\Div_{\RR}(X)^{\times n}$.
Assume that one of the following two conditions is satisfied.
\begin{enumerate}
\item[(a)] $\ord_Y(E)>0$.
\item[(b)] $\ord_Y(E_j)=0$ for every $j$.
\end{enumerate}
Then one has
\[
\lim_{\pmb{\varepsilon},\,\pmb{\delta}\to 0}\left\langle\left(\overline{D}+\pmb{\varepsilon}\cdot\overline{\bm{D}};E+\pmb{\delta}\cdot\bm{E}\right)^{\cdot\dim X}\right\rangle\Biggm|_Y=\left.\left\langle\left(\overline{D};E\right)^{\cdot\dim X}\right\rangle\right|_Y
\]
(see Corollary~\ref{cor:Yuan_main} (4)).
\end{enumerate}
\end{remark}

\begin{proposition}\label{prop: arithmetic Fujita approximation}
If $\left(\overline{D};E\right)\in\aDDiv_{\QQ,\QQ}(X)$ is a $Y$-big pair on $X$ and $E\geq 0$, then
\[
\avolq{X|Y}\left(\overline{D};E\right)=\left.\left\langle\left(\overline{D};E\right)^{\cdot(\dim Y+1)}\right\rangle\right|_Y.
\]
\end{proposition}

\begin{proof}
The proof is almost the same as \cite[Proof of Theorem~8.4]{IkomaRem}, so we are going to only outline it.
Obviously, it suffices to show the inequality $\leq$.
By homogeneity (see Corollary~\ref{cor:Yuan_main} \eqref{item: cor Yuan main homogeneity} and Remark~\ref{rem:arpin} \eqref{item: arpin homogeneity}), we can assume $\left(\overline{D};E\right)\in\aDDiv_{\ZZ,\ZZ}(X)$.
We regard $\widehat{\Gamma}^{\rm ss}\left(m\overline{D};mE\right)\subset H^0\left(\mathcal{O}_X\left(m\left(D-E\right)\right)\right)$, and set
\begin{equation}
 \widehat{\mathfrak{b}}_m\coloneqq \Image\left(\left\langle\widehat{\Gamma}^{\rm ss}\left(m\overline{D};mE\right)\right\rangle_K\otimes_K\mathcal{O}_X\left(-m\left(D-E\right)\right)\to\mathcal{O}_X\right)
\end{equation}
for each $m\in\ZZ_{\geq 1}$.
Let $\pi_{m,K}\colon X_m\to X$ be a desingularization of the blow-up along $\widehat{\mathfrak{b}}_m$ (see \cite{Hironaka64}), let $Y_m\coloneqq \pi_{m,K*}^{-1}(Y)$ denote the strict transform of $Y$ via $\pi_{m,K}$, and let
\[
\mathcal{O}_{X_m}(F_m)\coloneqq \mathcal{H}om_{\mathcal{O}_{X_m}}\left(\widehat{\mathfrak{b}}_m\mathcal{O}_{X_m},\mathcal{O}_{X_m}\right).
\]
We define an adelic $F_m$-Green function $\pmb{g}^{\overline{F}_m}$ by
\[
g_v^{\overline{F}_m}(x)\coloneqq \min_{\phi\in\widehat{\Gamma}^{\rm ss}\left(m\overline{D};mE\right)\setminus\{0\}}\left\{g_v^{m\overline{D}+\widehat{(\phi)}}\left(\pi_{m,K,v}^{\rm an}(x)\right)\right\}
\]
for $v\in M_K$ and $x\in X_{m,v}^{\rm an}$.
Then $\left(\overline{F}_m;mE\right)$ is effective and $\overline{M}_m\coloneqq \pi_{m,K}^*\left(m\overline{D}\right)-\overline{F}_m$ is nef.
Moreover, by the same arguments as in \cite[Proposition~4.7]{IkomaRem}, we have
\[
\overline{M}_m\leq_{Y_m}\left(\pi_{m,K}^*\left(m\overline{D}\right);mE\right),
\]
and the natural images of $\widehat{\Gamma}^{\rm ss}_{X_m|Y_m}\left(\overline{M}_m\right)$ and of $\widehat{\Gamma}^{\rm ss}_{X|Y}\left(m\overline{D};mE\right)$ coincide in $H^0\left(\pi_{m,K}^*\left(m\left(D-E\right)\right)\right)$ for every $m\in\ZZ_{\geq 1}$.

By Fekete's lemma, the sequence
\[
 \left(\frac{\avolq{X_m|Y_m}\left(\overline{M}_m\right)}{m^{\dim Y+1}}\right)_{m\in\ZZ_{\geq 1}}
\]
converges (see \cite[Claim~8.6]{IkomaRem}).
We choose a normal and projective $O_K$-model $\mathscr{X}$ of $X$ (respectively, $\mathscr{X}_m$ of $X_m$) having the following properties:
\begin{enumerate}
\renewcommand{\labelenumi}{(\alph{enumi})}
\item The Zariski closure $\mathscr{Y}$ (respectively, $\mathscr{Y}_m$) of $Y$ in $\mathscr{X}$ (respectively, in $\mathscr{X}_m$) is Cartier.
\item There exists an arithmetic Cartier divisor $\overline{\mathscr{M}}$ on $\mathscr{X}$ such that
\[
\widehat{\Gamma}^{\rm f}_{X|Y}\left(m\overline{D};mE\right)\subset H^0\left(\left.\mathcal{O}_{\mathscr{X}}\left(m\mathscr{M}\right)\right|_{\mathscr{Y}}\right)
\]
and
\[
\widehat{\Gamma}^{\rm ss}_{X|Y}\left(m\overline{D};mE\right)\subset\widehat{\Gamma}^{\rm s}\left(\left.\mathcal{O}_{\mathscr{X}}\left(m\overline{\mathscr{M}}\right)\right|_{\mathscr{Y}}\right)
\]
for every $m\in\ZZ_{\geq 1}$.
\item There exists a projective and birational morphism $\pi_m\colon \mathscr{X}_m\to\mathscr{X}$ extending $\pi_{m,K}$.
\item There exists a Zariski closed subset $\mathscr{Z}$ of $\mathscr{X}$ such that $\pi_m$ is isomorphic over $\mathscr{X}\setminus\mathscr{Z}$ for every $m\in\ZZ_{\geq 1}$.
\end{enumerate}
Let $\pi_m'\colon \mathscr{Y}_m\to\mathscr{Y}$ be the morphism induced from $\pi_m$.
Given any $\varepsilon\in\RR_{>0}$, we can take a prime number $p$ such that
\begin{equation}\label{eqn:genFujita1}
 \frac{C'\left(\left.\overline{\mathscr{M}}^{\rm ad}\right|_Y,Y\right)}{\log(p)}\leq\varepsilon
\end{equation}
and such that there exists a good flag
\[
\mathscr{F}_{\geq 1}\colon \mathscr{Y}=\mathscr{F}_1\supset\mathscr{F}_2\supset\dots\supset\mathscr{F}_{\dim\mathscr{X}}
\]
on $\mathscr{Y}$ over $p$ such that $\mathscr{F}_{\dim\mathscr{X}}$ is not contained in $\mathscr{Z}$ (see \cite[Lemma~6.4]{IkomaRem}).
Let
\[
{\pi}_{m*}^{\prime -1}\left(\mathscr{F}_{\geq 1}\right)\colon \mathscr{Y}_m\supset\pi_{m*}^{\prime -1}\left(\mathscr{F}_2\right)\supset\dots\supset\pi_{m*}^{\prime -1}\left(\mathscr{F}_{\dim\mathscr{X}}\right)
\]
denote the flag on $\mathscr{Y}_m$ obtained by taking the strict transforms of $\mathscr{F}_{\geq 1}$ (see \cite[Lemma~6.3(2)]{IkomaRem}), and set
\[
 \Delta(m)\coloneqq \overline{\left(\bigcup_{k\geq 1}\frac{1}{km}\bm{w}_{\pi_{m*}^{\prime -1}(\mathscr{F}_{\geq 1})}\left(\widehat{\Gamma}_{CL(X_m|Y_m)}^{\rm ss}\left(k\overline{M}_m\right)\setminus\{0\}\right)\right)}
\]
for every sufficiently large $m\in\ZZ_{\geq 1}$.
Then, by \cite[Th{\'e}or{\`e}me~1.15]{BoucksomBourbaki}, we have
\[
 \vol_{\RR^{\dim Y+1}}\left(\Delta(m)\right)\log(p)\geq\vol_{\RR^{\dim Y+1}}\left(\widehat{\Delta}_{CL(X|Y)}^{\mathscr{F}_{\geq 1}}\left(\overline{D};E\right)\right)\log(p)-\varepsilon
\]
for every sufficiently large $m$.
On the other hand, by Corollary~\ref{cor:YuanMoriwaki}, we have
\[
 \vol_{\RR^{\dim Y+1}}\left(\widehat{\Delta}_{CL(X|Y)}^{\mathscr{F}_{\geq 1}}\left(\overline{D};E\right)\right)\log(p)\geq\frac{\avolq{X|Y}\left(\overline{D};E\right)}{(\dim Y+1)!}-\varepsilon
\]
and
\[
 \frac{\avolq{X_m|Y_m}\left(\overline{M}_m\right)}{(\dim Y+1)!m^{\dim Y+1}}\geq\vol_{\RR^{\dim Y+1}}\left(\Delta(m)\right)\log(p)-\varepsilon.
\]
Thus
\[
 \lim_{m\to\infty}\frac{\avolq{X_m|Y_m}\left(\overline{M}_m\right)}{m^{\dim Y+1}}\geq\avolq{X|Y}\left(\overline{D};E\right)-3(\dim Y+1)!\varepsilon.
\]
Hence, by \cite[Proposition~8.1]{IkomaRem}, we have $\left.\left\langle\left(\overline{D};E\right)^{\cdot(\dim Y+1)}\right\rangle\right|_Y\geq\avolq{X|Y}\left(\overline{D};E\right)$.
\end{proof}

\subsection{Proof of Theorem~B}\label{subsec:Cont_at_the_boundary}

Let $\mathscr{X}$ be a normal, projective, and arithmetic $O_K$-variety with smooth generic fiber $X$.
Let $\overline{\mathscr{A}}$ be an arithmetic Cartier divisors of $C^{\infty}$-type on $\mathscr{X}$ such that $\mathscr{A}$ is ample, $\overline{\mathscr{A}}^{\rm ad}$ is w-ample, and the curvature form $c_1\left(\overline{\mathscr{A}}\right)$ is positive pointwise on $X_{\infty}^{\rm an}$.
Let $\overline{\pmb{\mathscr{E}}}\coloneqq\left(\overline{\mathscr{E}}_1,\dots,\overline{\mathscr{E}}_l\right)$ be a family of horizontal arithmetic Cartier divisors of $C^{\infty}$-type on $\mathscr{X}$.

By Lemma~\ref{lem: Estimates II generation}, one can find a $p_0\in\ZZ_{\geq 1}$ and an $a_0\in\QQ_{>0}$ such that, for every $(p,\bm{q})\in(\ZZ_{\geq 0})^{l+1}$ with $p\geq p_0$ and $\|\bm{q}\|\leq a_0p$, the following three conditions are satisfied:
\begin{enumerate}
\renewcommand{\labelenumi}{(\alph{enumi})}
\item $\left(p\overline{\mathscr{A}}-\bm{q}\cdot\overline{\pmb{\mathscr{E}}}\right)^{\rm ad}$ is ample with pointwise positive curvature form.
\item The evaluation map
\begin{equation}\label{eqn:evaluation_one-side}
H^0\left(\left.\left(p\mathscr{A}-\bm{q}\cdot\pmb{\mathscr{E}}\right)\right|_X\right)\otimes_K\mathcal{O}_X\to\mathcal{O}_X\left(\left.\left(p\mathscr{A}-\bm{q}\cdot\pmb{\mathscr{E}}\right)\right|_X\right)
\end{equation}
is surjective.
\item $H^0(p\mathscr{A}-\bm{q}\cdot\pmb{\mathscr{E}})$ is generated by the strictly small sections over $\QQ$: namely
\begin{equation}\label{eqn:generation_one-side}
H^0(p\mathscr{A}-\bm{q}\cdot\pmb{\mathscr{E}})\otimes_{\ZZ}\QQ=\left\langle\widehat{\Gamma}^{\rm ss}\left(p\overline{\mathscr{A}};\bm{q}\cdot\pmb{\mathscr{E}}\right)\right\rangle_{\QQ}.
\end{equation}
\end{enumerate}

Given any $(p,\bm{q})\in(\ZZ_{\geq 0})^{l+1}$ with $p\geq p_0$ and $\|\bm{q}\|\leq a_0p$, we endow $\bm{q}\cdot\pmb{\mathscr{E}}$ with a Green function as
\[
\overline{\bm{q}\cdot\pmb{\mathscr{E}}}^{(p,\bm{q})}\coloneqq \left(\bm{q}\cdot\pmb{\mathscr{E}},\min_{\phi\in\widehat{\Gamma}^{\rm ss}\left(p\overline{\mathscr{A}};\bm{q}\cdot\pmb{\mathscr{E}}\right)\setminus\{0\}}\left\{g^{p\overline{\mathscr{A}}+\widehat{(\phi)}}\right\}\right),
\]
which is an effective arithmetic Cartier divisor on $\mathscr{X}$ having the following two properties (see Proof of Proposition~\ref{prop: arithmetic Fujita approximation}):
\begin{enumerate}
\renewcommand{\labelenumi}{(\alph{enumi})}
\item $p\overline{\mathscr{A}}-\overline{\bm{q}\cdot\pmb{\mathscr{E}}}^{(p,\bm{q})}$ is of $(\textup{PSH}\cap C^0)$-type (see \cite[section~2.3]{MoriwakiZar}).
\item $\widehat{\Gamma}^{\rm ss}_{\mathscr{X}|\mathscr{Y}}\left(p\overline{\mathscr{A}};\bm{q}\cdot\pmb{\mathscr{E}}\right)\subset\widehat{\Gamma}^{\rm ss}_{\mathscr{X}|\mathscr{Y}}\left(p\overline{\mathscr{A}}-\overline{\bm{q}\cdot\pmb{\mathscr{E}}}^{(p,\bm{q})}\right)$.
\end{enumerate}

\begin{lemma}\label{lem:Ess_Inclusion}
Let $p_0\in\ZZ_{\geq 1}$ and $a_0\in\QQ_{>0}$ be as above.
Fix any $\varepsilon\in\RR$ with $0<\varepsilon\leq 1$, and let $\lambda_{\varepsilon}\in\RR_{>0}$ be as in Lemma~\ref{lem:PSH_Env}.
Then one has
\[
\widehat{\Gamma}^{\rm ss}_{\mathscr{X}|\mathscr{Y}}\left(p\overline{\mathscr{A}};\bm{q}\cdot\pmb{\mathscr{E}}\right)\subset\widehat{\Gamma}^{\rm ss}\left(\left.\left(p\left(\overline{\mathscr{A}}(\varepsilon)\right)-\bm{q}\cdot\left(\overline{\pmb{\mathscr{E}}}(-\lambda_{\varepsilon})\right)\right)\right|_{\mathscr{Y}}\right)
\]
for any $(p,\bm{q})\in(\ZZ_{\geq 0})^{l+1}$ with $p\geq p_0$ and $\|\bm{q}\|\leq a_0p$.
\end{lemma}

\begin{proof}
By Lemma~\ref{lem:PSH_Env}, we have $g^{p\overline{\mathscr{A}}-\overline{\bm{q}\cdot\pmb{\mathscr{E}}}^{(p,\bm{q})}}\leq g^{p\overline{\mathscr{A}}-\bm{q}\cdot\overline{\pmb{\mathscr{E}}}}+p\varepsilon+\|\bm{q}\|\lambda_{\varepsilon}$.
Hence
\[
\widehat{\Gamma}^{\rm ss}_{\mathscr{X}|\mathscr{Y}}\left(p\overline{\mathscr{A}};\bm{q}\cdot\pmb{\mathscr{E}}\right)\subset\widehat{\Gamma}^{\rm ss}_{\mathscr{X}|\mathscr{Y}}\left(p\overline{\mathscr{A}}-\overline{\bm{q}\cdot\pmb{\mathscr{E}}}^{(p,\bm{q})}\right)\subset\widehat{\Gamma}^{\rm ss}\left(\left.\left(p\left(\overline{\mathscr{A}}(\varepsilon)\right)-\bm{q}\cdot\left(\overline{\pmb{\mathscr{E}}}(-\lambda_{\varepsilon})\right)\right)\right|_{\mathscr{Y}}\right).
\]
\end{proof}

\begin{proposition}\label{prop: Cont at Bound}
Let $\mathscr{X}$ be a normal, projective, and arithmetic $O_K$-variety with smooth generic fiber $X$, and let $Y$ be a prime Cartier divisor on $X$.
Let $\overline{\mathscr{A}}$ be an arithmetic Cartier divisor of $C^{\infty}$-type on $\mathscr{X}$ such that $\mathscr{A}$ is ample and such that the curvature form $c_1\left(\overline{\mathscr{A}}\right)$ is positive pointwise on $X_{\infty}^{\rm an}$.
Let $\overline{\pmb{\mathscr{D}}}\coloneqq\left(\overline{\mathscr{D}}_1,\dots,\overline{\mathscr{D}}_m\right)$ be a family of arithmetic Cartier divisors of $C^{\infty}$-type on $\mathscr{X}$, and let $\pmb{\mathscr{E}}=\left(\mathscr{E}_1,\dots,\mathscr{E}_n\right)$ be a family of effective horizontal Cartier divisor on $\mathscr{X}$.
Then we have
\[
\lim_{\substack{\bm{t}\to 0, \\ \bm{r}\downarrow 0}}\left.\left\langle\left(\left(\overline{\mathscr{A}}+\bm{t}\cdot\overline{\pmb{\mathscr{D}}};\bm{r}\cdot\pmb{\mathscr{E}}\right)^{\rm ad}\right)^{\cdot\dim X}\right\rangle\right|_Y\leq \left.\left\langle\left(\overline{\mathscr{A}}^{\rm ad}\right)^{\cdot\dim X}\right\rangle\right|_Y.
\]
\end{proposition}

\begin{proof}
Let $p_0\in\ZZ_{\geq 1}$ and $a_0\in\QQ_{>0}$ be as above.
We fix any $\varepsilon\in\RR_{>0}$ with $0<\varepsilon\leq 1$, and let $\lambda_{\varepsilon}\in\RR_{>0}$ be as in Lemma~\ref{lem:PSH_Env}.
We fix a $b_0\in\ZZ_{\geq 1}$ such that $\overline{\mathscr{D}}_i^{\rm ad}\preceq_Yb_0\overline{\mathscr{A}}^{\rm ad}$ for every $i$.
For each $j$, we fix an $\mathscr{E}_j$-Green function of $C^{\infty}$-type.
By Lemma~\ref{lem:Ess_Inclusion}, we have
\[
\widehat{\Gamma}_{\mathscr{X}|\mathscr{Y}}^{\rm ss}\left(p\overline{\mathscr{A}};\bm{q}\cdot\pmb{\mathscr{E}}\right)\subset\widehat{\Gamma}_{\mathscr{X}|\mathscr{Y}}^{\rm ss}\left(\left.\left(p\left(\overline{\mathscr{A}}(\varepsilon)\right)-\bm{q}\cdot\left(\overline{\pmb{\mathscr{E}}}(-\lambda_{\varepsilon})\right)\right)\right|_{\mathscr{Y}}\right)
\]
for every $(p,\bm{q})\in(\ZZ_{\geq 0})^{n+1}$ with $p\geq p_0$ and $\|\bm{q}\|\leq a_0 p$.
Hence
\[
\avol_{X|Y}\left(\left(\overline{\mathscr{A}}+\bm{t}\cdot\overline{\pmb{\mathscr{D}}};\bm{r}\cdot\pmb{\mathscr{E}}\right)^{\rm ad}\right)\leq\avol\left(\left.\left((1+\|\bm{t}\|b_0)\overline{\mathscr{A}}(\varepsilon)-\bm{r}\cdot\overline{\pmb{\mathscr{E}}}(-\lambda_{\varepsilon})\right)^{\rm ad}\right|_Y\right)
\]
for every $\bm{t}\in\QQ^m$ and $\bm{r}\in(\QQ_{\geq 0})^n$ with $\|\bm{r}\|\leq a_0$.
By taking $\bm{t}\to 0$ and $\bm{r}\downarrow 0$,
\begin{align*}
&\lim_{\substack{\bm{t}\to 0, \\ \bm{r}\downarrow 0}}\left.\left\langle\left(\left(\overline{\mathscr{A}}+\bm{t}\cdot\overline{\pmb{\mathscr{D}}};\bm{r}\cdot\pmb{\mathscr{E}}\right)^{\rm ad}\right)^{\cdot\dim X}\right\rangle\right|_Y \leq\avol\left(\left.\left(\overline{\mathscr{A}}(\varepsilon)\right)^{\rm ad}\right|_Y\right) \\
&\qquad\qquad\qquad =\left.\left\langle\left(\overline{\mathscr{A}}^{\rm ad}\right)^{\cdot\dim X}\right\rangle\right|_Y+\varepsilon(\dim X)[K_Y:\QQ]\vol\left(\left.\mathscr{A}\right|_Y\right)
\end{align*}
(see \cite[Corollary~7.2 (1)]{MoriwakiEst}).
Hence we conclude the proof by taking $\varepsilon\downarrow 0$.
\end{proof}

\begin{proof}[Proof of Theorem~B]
We start proving Theorem~B.
We may assume without loss of generality that $E_j$'s are all effective.
Let $\pmb{g}^{\overline{Y}}$ (respectively, $\pmb{g}^{\overline{E}_j}$) be an adelic Green function such that $\overline{Y}\coloneqq\left(Y,\pmb{g}^{\overline{Y}}\right)$ is effective (respectively, $\overline{E}_j\coloneqq\left(E_i,\pmb{g}^{\overline{E}_j}\right)$ is effective for every $j$).
Then 
\begin{align*}
&\lim_{\substack{\bm{t},\bm{u}\to 0, \\ r\downarrow 0}}\left.\left\langle\left(\overline{A}+\bm{t}\cdot\overline{\bm{D}};rY+\bm{u}\cdot\bm{E}\right)^{\cdot\dim X}\right\rangle\right|_Y \\
&\qquad\qquad\qquad \geq \lim_{\substack{\bm{t},\bm{u}\to 0, \\ r\downarrow 0}}\left.\left\langle\left(\overline{A}+\bm{t}\cdot\overline{\bm{D}}-r\overline{Y}-|\bm{u}|\cdot\overline{\bm{E}}\right)^{\cdot\dim X}\right\rangle\right|_Y=\left.\left\langle\overline{A}^{\cdot\dim X}\right\rangle\right|_Y.
\end{align*}
Hence it suffices to show the reverse inequality.

First, we treat the case where $\overline{A}$, $\overline{\bm{D}}$, and $\bm{E}$ are all integral and $\overline{A}$ is ample.
Let $\varphi:X_1\to X$ be a resolution of singularities of $X$ (see \cite{Hironaka64}), and let $\varphi_*^{-1}:\Div_{\RR}(X)\to\Div_{\RR}(X_1)$ denote the strict transform via $\varphi$.
Then
\[
\left.\left\langle\left(\overline{A}+\bm{t}\cdot\overline{\bm{D}};rY+\bm{u}\cdot\bm{E}\right)^{\cdot\dim X}\right\rangle\right|_Y=\left.\left\langle\left(\varphi^*\left(\overline{A}+\bm{t}\cdot\overline{\bm{D}}\right);rY+\bm{u}\cdot\bm{E}\right)^{\cdot\dim X}\right\rangle\right|_{\varphi_*^{-1}(Y)}
\]
for every $r\in\QQ$, $\bm{t}\in\QQ^m$, and $\bm{u}\in\QQ^n$ (see \cite[Lemma~2.15]{IkomaDiff1}).
Hence we may assume that $X$ is smooth.
We can take a finite subset $S\subset M_K$ containing $\{\infty\}$ such that $\left(g_v^{\overline{A}}\right)_{v\in M_K\setminus S}$ is defined on a suitable $O_K$-model of $X$.
We fix any sufficiently small $\gamma_0\in\RR$ with $0<\gamma_0<1$ such that
\[
\overline{A}\left(-\pmb{\gamma}^S\right)\coloneqq \overline{A}-\left(0,\sum_{v\in S}\gamma[v]\right)
\]
is w-ample for every $\gamma\in\RR$ with $0\leq \gamma\leq \gamma_0$.
Given such a $\gamma\in\RR$, we can find, by using \cite[Proposition~4.4.2]{MoriwakiAdelic} and \cite[Theorem~4.6]{MoriwakiZar}, an $O_K$-model $\left(\mathscr{X}_{\gamma},\overline{\mathscr{B}_{\gamma}}\right)$ consisting of a normal and projective $O_K$-model $\mathscr{X}_{\gamma}$ of $X$ and an arithmetic $\QQ$-Cartier divisor $\overline{\mathscr{B}_{\gamma}}$ of $C^{\infty}$-type on $\mathscr{X}_{\gamma}$ having the following properties (see also \cite[Proposition~3.1]{IkomaCon}):
\begin{enumerate}
\renewcommand{\labelenumi}{(\alph{enumi})}
\item $\mathscr{B}_{\gamma}$ is ample.
\item The curvature form $c_1\left(\overline{\mathscr{B}_{\gamma}}\right)$ is positive pointwise on $X_{\gamma,\infty}^{\rm an}$.
\item $(1-\gamma)\overline{A}\left(-\pmb{\gamma}^S\right)\preceq_Y\overline{\mathscr{B}_{\gamma}}^{\rm ad}\preceq_Y\overline{A}$.
\item The Zariski closure $\mathscr{Y}_{\gamma}$ of $Y$ in $\mathscr{X}_{\gamma}$ is Cartier.
\end{enumerate}
Let $\alpha_0$ be an integer such that $\alpha_0\overline{A}-\overline{D}_i$ are w-ample for all $i$ (see \cite[Lemma~5.3]{IkomaRem}).
Applying Proposition~\ref{prop: Estimates II inclusions} to $\overline{A}\left(-\pmb{\gamma}_0^S\right)$, we can find a $\rho_0\in\QQ_{>0}$ such that
\begin{align*}
&\left.\left\langle\left(\overline{A}\left(-\pmb{\gamma}^S\right)+\alpha_0\|\bm{t}\|\overline{A};rY+\bm{u}\cdot\bm{E}\right)^{\cdot\dim X}\right\rangle\right|_Y \\
&\qquad\qquad\qquad\qquad\qquad =\avol_{\quot(X|Y)}\left(\overline{A}\left(-\pmb{\gamma}^S\right)+\alpha_0\|\bm{t}\|\overline{A};rY+\bm{u}\cdot\bm{E}\right)
\end{align*}
for every $\gamma\in\RR$, $r\in\QQ_{\geq 0}$, $\bm{t}\in\QQ^m$, and $\bm{u}\in\QQ^n$ with $0\leq\gamma\leq\gamma_0$ and $r+\|\bm{u}\|\leq\rho_0$.
By Remark~\ref{rem:arpin} \eqref{item: arpin homogeneity} and Proposition~\ref{prop: arv rescaling},
\begin{multline*}
(1-\gamma)^{\dim X}\Biggl(\left.\left\langle\left(\left(1+\alpha_0\|\bm{t}\|\right)\overline{A};rY+\bm{u}\cdot\bm{E}\right)^{\cdot\dim X}\right\rangle\right|_Y \\
-(\dim X+1)[K:\QQ]\vol_{X|Y}\left((1+\alpha_0\|\bm{t}\|)A-rY-\bm{u}\cdot\bm{E}\right)\#S\gamma\Biggr) \\
\leq \left.\left\langle\left(\left(\left(1+\alpha_0\|\bm{t}\|\right)\overline{\mathscr{B}_{\gamma}};(1-\gamma)r\mathscr{Y}_{\gamma}\right)^{\rm ad}\right)^{\cdot\dim X}\right\rangle\right|_Y.
\end{multline*}
Taking $r\downarrow 0$, $\bm{t}\to 0$, $\bm{u}\to 0$, and $\gamma\downarrow 0$, we obtain the theorem by Proposition~\ref{prop: Cont at Bound}.

Next, we treat the general case.
By the previous arguments, we obtain
\begin{align*}
&\lim_{\substack{\bm{t},\bm{u}\to 0, \\ r\downarrow 0}}\left.\left\langle\left(\overline{A}+\bm{t}\cdot\overline{\bm{D}};rY+\bm{u}\cdot\bm{E}\right)^{\cdot\dim X}\right\rangle\right|_Y \\
&\qquad\qquad\qquad \leq \lim_{\substack{\bm{t},\bm{u}\to 0, \\ r\downarrow 0}}\left.\left\langle\left(\overline{A}'+\bm{t}\cdot\overline{\bm{D}};rY+\bm{u}\cdot\bm{E}\right)^{\cdot\dim X}\right\rangle\right|_Y=\left.\left\langle{\overline{A}'}^{\cdot\dim X}\right\rangle\right|_Y
\end{align*}
for every ample adelic $\QQ$-Cartier divisor $\overline{A}'$ such that $\overline{A}\preceq_Y\overline{A}'$.
Hence the theorem follows from continuity (see Remark~\ref{rem:arpin} \eqref{item: continuity arpin}).
\end{proof}

%%%
\section{Proof of Theorem~A}

In this section, we shall give an upper and a lower bounds for the one-sided directional derivatives of the arithmetic volume function along the directions defined by prime Cartier divisors (see Corollary~\ref{cor:upper_bound} and Theorem~\ref{thm:FE3}, respectively).
Theorem~A is a direct consequence of these two estimates.

\subsection{Differentiability of concave functions}\label{subsec: concave}

\begin{definition}
\begin{enumerate}
\item For $r\in\RR_{>0}$ and $v\in\RR^n$, we set
\[
B_r(v)\coloneqq \left\{p\in\RR^n\,\colon \,\|p-v\|<r\right\}
\]
(see Notation and terminology~\ref{NC:norm}).
Let $C$ be a nonempty open subset of $\RR^n$ and let $\KK$ denote either $\QQ$ or $\RR$.
A function $f\colon C\cap\KK^n\to\RR$ is said to be \emph{locally Lipschitz-continuous} on $C$ if, given any $a\in C$, there exist an $\varepsilon\in\RR_{>0}$ and an $L\in\RR_{>0}$ such that $B_{\varepsilon}(a)\subset C$ and
\[
|f(p)-f(q)|\leq L\|p-q\|
\]
for all $p,q\in B_{\varepsilon}(a)\cap\KK^n$.
\begin{enumerate}
\item As is well known, any concave function defined on $C\cap\QQ^n$ is locally Lipschitz-continuous on $C$ and extends uniquely to a continuous function defined on $C$ (see \cite[section 1.3]{MoriwakiEst}).
\item Suppose that $f,g\colon C\to\RR$ are locally Lipschitz-continuous functions on $C$.
The product $f\cdot g\colon C\to\RR$ is also locally Lipschitz-continuous on $C$.
If $g(p)\neq 0$ for all $p\in C$, then the quotient $f/g\colon C\to\RR$ is also locally Lipschitz-continuous on $C$.
\end{enumerate}
\item Let $e_i=(0,\dots,0,\overset{i}{1},0,\dots,0)$ denote the $i$-th standard basis vector of $\RR^n$, and let $f$ be a concave function defined on a nonempty convex open subset $C$ of $\RR^n$.
Then the $i$-th right (respectively, left) partial derivative of $f$ exists and is denoted by
\begin{align*}
&f_{x_i+}(p) \coloneqq \lim_{r\downarrow 0}\frac{f(p+re_i)-f(p)}{r}\\
\text{(respectively,}\quad &f_{x_i-}(p)\coloneqq \lim_{r\uparrow 0}\frac{f(p+re_i)-f(p)}{r}\text{)}
\end{align*}
for each $p\in C$.
\end{enumerate}
\end{definition}

\begin{lemma}\label{lem: fund property of concave function}
Let $C$ be a nonempty convex open subset of a Euclidean space $\RR^n$ and let $f\colon C\to\RR$ be a concave function.
Suppose that the function $f_{x_n+}\colon C\cap\QQ^n\to\RR$ is locally Lipschitz-continuous on $C$.
Then $f_{x_n}$ exists at any point in $C$ and $f_{x_n}\colon C\to\RR$ is continuous on $C$.
\end{lemma}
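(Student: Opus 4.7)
The plan is to extract from the hypothesis a continuous function $g:C\to\RR$ and show that $g$ coincides with both one-sided directional derivatives of $f$ in direction $e_n$ at every point of $C$. Since $f_{x_n+}$ is assumed locally Lipschitz on the dense subset $C\cap\QQ^n$, it extends uniquely to a locally Lipschitz (hence continuous) function $g$ on $C$ by the standard completion argument for uniformly continuous maps between complete metric spaces.

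The first step is to establish the sandwich $f_{x_n+}(p)\leq g(p)\leq f_{x_n-}(p)$ at every $p\in C$. For the left inequality, I would use the fact that for concave $f$ and any $q\in C\cap\QQ^n$, the forward difference quotient satisfies
\[
\frac{f(q+re_n)-f(q)}{r}\leq f_{x_n+}(q)=g(q)
\]
for every small $r>0$; approximating $p$ by rational points $q_k\to p$ and invoking continuity of $f$ and of $g$, then letting $r\downarrow 0$, yields $f_{x_n+}(p)\leq g(p)$. The right inequality is symmetric, using the backward difference quotient together with $f_{x_n-}(q)\geq f_{x_n+}(q)=g(q)$ at rational $q$.

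The second step is to promote these to equalities by slicing. Fix $p\in C$ and set $h(t):=f(p+te_n)$ on a neighborhood of $0$; this is a one-variable concave function. Standard facts on concave functions of one real variable tell us that $h'_+$ is non-increasing and right-continuous with $h'_-(t_0)=\lim_{s\uparrow t_0}h'_+(s)$, and symmetrically $h'_+(t_0)=\lim_{s\downarrow t_0}h'_-(s)$. Combining the first-step inequality $h'_+(s)=f_{x_n+}(p+se_n)\leq g(p+se_n)$ with continuity of $g$, and passing to the limit $s\uparrow 0$, gives $f_{x_n-}(p)=h'_-(0)\leq g(p)$. Together with the sandwich this forces $f_{x_n-}(p)=g(p)$; the dual argument with $h'_-(s)\geq g(p+se_n)$ and $s\downarrow 0$ forces $f_{x_n+}(p)=g(p)$. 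Hence $f_{x_n}(p)$ exists and equals the continuous function $g(p)$, proving both conclusions.

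The main obstacle I anticipate is cleanly handling the interplay between the $n$-dimensional density argument (where one approximates arbitrary $p\in C$ by rationals to transfer information from $g$) and the one-variable concavity facts on the slice. In particular, one must observe that the first-step inequality $f_{x_n+}(q)\leq g(q)$ is established for \emph{every} $q\in C$, not only for rational $q$; this is precisely what allows us to apply the inequality along an irrational slice $\{p+se_n\}$ and close the loop via right-/left-continuity of the one-sided derivatives.
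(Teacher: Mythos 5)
Your proof is correct and takes a genuinely different route from the paper's. The paper first fixes a point $a=(a',b)$ with $a'\in\QQ^{n-1}$ and shows that $f_{x_n}$ exists there by contradiction: if $f_{x_n+}(a)<f_{x_n-}(a)$, then choosing rational $q<b<p$ close to $b$, monotonicity of one-sided derivatives along the concave slice gives
\[
f_{x_n+}(a',p)\leq f_{x_n+}(a)<f_{x_n-}(a)\leq f_{x_n+}(a',q),
\]
which contradicts the local Lipschitz bound on $f_{x_n+}$ at $\QQ^n$-points since $|p-q|$ can be made arbitrarily small. It then forms the integral representation $F(x',x_n)=f(x',b)+\int_b^{x_n}g(x',r)\,dr$, observes that $F=f$ on $U\cap(\QQ^{n-1}\times\RR)$ and hence on all of $U$ by continuity and density, and differentiates. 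Your proof instead establishes the sandwich $f_{x_n+}(p)\leq g(p)\leq f_{x_n-}(p)$ at \emph{every} $p\in C$, by letting rational $q\to p$ inside the forward and backward difference quotients before letting $r\downarrow 0$, and then closes the sandwich at an arbitrary $p$ by combining the classical one-variable identities $h'_-(0)=\lim_{s\uparrow 0}h'_+(s)$ and $h'_+(0)=\lim_{s\downarrow 0}h'_-(s)$ for the concave slice $h(t)=f(p+te_n)$ with the continuity of $g$. Your route avoids both the explicit use of the Lipschitz constant in a contradiction step and the integral representation needed to handle irrational horizontal coordinates; it uses the Lipschitz hypothesis only through the consequence that $f_{x_n+}$ extends to a continuous $g$ on $C$. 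Both arguments are sound; yours is somewhat more streamlined and self-contained.
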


\begin{proof}
By hypothesis, there exists a unique continuous function $g$ on $C$ such that $f_{x_n+}=g$ on $C\cap\QQ^n$.
Denote a point in $C$ by $a=(a',b)$ with $a'\in\RR^{n-1}$ and $b\in\RR$.
We show the following claim:

\begin{claim}\label{clm: FP concave}
If $a'\in\QQ^{n-1}$, then $f_{x_n}$ exists at $a$ and $f_{x_n}(a)=g(a)$.
\end{claim}

\begin{proof}[Proof of Claim~\ref{clm: FP concave}]
Indeed, suppose that $f_{x_n}$ does not exist at $a$.
Then one has $f_{x_n+}(a)<f_{x_n-}(a)$ (see \cite[page 26, Theorem~2.7]{GruberBook}).
Since $f_{x_n+}\colon C\cap\QQ^n\to\RR$ is locally Lipschitz-continuous on $C$, there exist an $\varepsilon\in\RR_{>0}$ and an $L\in\RR_{>0}$ such that $B_{\varepsilon}(a)\subset C$ and
\begin{equation}\label{eqn: FP concave}
|f_{x_n+}(x)-f_{x_n+}(y)|\leq L\|x-y\|
\end{equation}
for all $x,y\in B_{\varepsilon}(a)\cap\QQ^n$.
Set $\varepsilon'\coloneqq \min\{\varepsilon,(f_{x_n-}(a)-f_{x_n+}(a))/2L\}$, and choose two rational numbers $p,q\in\QQ$ such that $b-\varepsilon'<q<b<p<b+\varepsilon'$.
Then
\[
f_{x_n+}(a',p)\leq f_{x_n+}(a)<f_{x_n-}(a)\leq f_{x_n+}(a',q),
\]
which contradicts the property \eqref{eqn: FP concave}.
The assertion that $f_{x_i}(a)=g(a)$ is then obvious (see for example \cite[page 27, Theorem~2.8]{GruberBook}).
\end{proof}

We set
\[
F(x',x_n)\coloneqq f(x',b)+\int_b^{x_n}g(x',r)\,dr,
\]
which is defined on a suitably small open neighborhood $U$ of $a$.
Since both $f$ and $F$ are continuous on $U$ and coincide on $U\cap(\QQ^{n-1}\times\RR)$, they are identical on $U$.
Hence $f_{x_n}$ exists at $a$ and $f_{x_n}(a)=F_{x_n}(a)=g(a)$.
\end{proof}

\subsection{Upper bound}\label{subsec: upper bound}

\begin{theorem}\label{thm:FE}
Let $\ast$ denote either ${\rm ss}$ or ${\rm s}$.
Let $X$ be a normal, projective, and geometrically connected $K$-variety and let $Y$ be any effective Cartier divisor on $X$.
We fix an adelic Cartier divisor $\overline{A}$ on $X$ such that $\widehat{\Gamma}^{\rm s}_{X|Y}\left(\overline{A}\right)\neq\{0\}$ and such that $\widehat{\Gamma}^{\rm s}_{X|Y}\left(\overline{A};Y\right)\neq\{0\}$.
Then, for any $\left(\overline{D};E\right)\in\aDDiv_{\ZZ,\ZZ}(X)$ with $E\geq 0$ and for any $n\in\ZZ_{\geq 0}$, we have
\begin{align*}
0 &\leq\ah\left(\overline{D};E\right)-\ah\left(\overline{D};E+nY\right) \\
&\qquad\qquad \leq n\ah_{X|Y}\left(\overline{D}(\log(2))+n\overline{A};E+nY\right)+\log(6)\rk_{\QQ} H^0(D)
\end{align*}
and
\begin{align*}
0 &\leq\ah\left(\overline{D};E-nY\right)-\ah\left(\overline{D};E\right) \\
&\qquad\qquad \leq n\ah_{X|Y}\left(\overline{D}(\log(2))+n\overline{A};E\right)+\log(6)\rk_{\QQ} H^0(D).
\end{align*}
\end{theorem}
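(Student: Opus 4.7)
The two left-hand inequalities are immediate: since $Y$ is effective and $n\geq 0$, the definition of $\widehat{\Gamma}^{\ast}(\overline{D};\cdot)$ gives the nested containments $\widehat{\Gamma}^{\ast}(\overline{D};E+nY)\subseteq\widehat{\Gamma}^{\ast}(\overline{D};E)\subseteq\widehat{\Gamma}^{\ast}(\overline{D};E-nY)$. I describe the argument for the first right-hand inequality; the second follows by the symmetric filtration with $W_k:=H^0(D-E+(n-k)Y)$ in place of the $V_k$ below. Put $V_k:=H^0(D-E-kY)$ for $k=0,1,\dots,n$, regarded as a subspace of $V_0:=H^0(D-E)$ via the canonical section $1_Y^k$, and equip each $V_k$ with the subspace adelic norms inherited from the sup-norms $\|\cdot\|_{v,\sup}^{\overline{D}}$ on $V_0$. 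Then $\widehat{\Gamma}^{\ast}(\overline{V}_k)=\widehat{\Gamma}^{\ast}(\overline{D};E+kY)$, so the quantity to bound is $\ah(\overline{V}_0)-\ah(\overline{V}_n)$.

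Applying the first inequality of Remark~\ref{rem:exact_sequence}\eqref{item: Yuan exact sequences} to the exact sequence $0\to V_n\to V_0\to V_0/V_n\to 0$ yields
\[
\ah(\overline{V}_0)\leq\ah(\overline{V}_n(\log 2))+\log\#\,r\bigl(\widehat{\Gamma}^{\ast}(\overline{V}_0)\bigr),
\]
with $r:V_0\to V_0/V_n$ the quotient. Yuan's rescaling (Remark~\ref{rem:Yuan_rescale}) bounds $\ah(\overline{V}_n(\log 2))\leq\ah(\overline{V}_n)+\log(6)\rk V_n\leq\ah(\overline{V}_n)+\log(6)\rk_{\QQ}H^0(D)$, producing the explicit error term. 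It remains to bound $\log\#\,r(\widehat{\Gamma}^{\ast}(\overline{V}_0))$ by $n\,\ah_{X|Y}(\overline{D}(\log 2)+n\overline{A};E+nY)$. For this I equip $V_0/V_n$ with the descending filtration $V_0/V_n\supset V_1/V_n\supset\dots\supset V_{n-1}/V_n\supset 0$ and set $\mathcal{F}_k:=r(\widehat{\Gamma}^{\ast}(\overline{V}_0))\cap(V_k/V_n)$. Because each $V_k$ carries the subspace norm, $\widehat{\Gamma}^{\ast}(\overline{V}_0)\cap V_k=\widehat{\Gamma}^{\ast}(\overline{V}_k)$, and therefore $\mathcal{F}_k/\mathcal{F}_{k+1}$ embeds into the image $r_k(\widehat{\Gamma}^{\ast}(\overline{V}_k))=\widehat{\Gamma}^{\ast}_{X|Y}(\overline{D};E+kY)\subset H^0(\mathcal{O}_X(D-E-kY)|_Y)$. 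Taking cardinalities,
\[
\log\#\,r\bigl(\widehat{\Gamma}^{\ast}(\overline{V}_0)\bigr)\leq\sum_{k=0}^{n-1}\ah_{X|Y}(\overline{D};E+kY).
\]

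The final step transports every summand into a single restricted volume by multiplication. Choose $t_0\in\widehat{\Gamma}^{\rm s}(\overline{A})$ with $t_0|_Y\neq 0$ in $H^0(\mathcal{O}_X(A)|_Y)$ and $t_1\in\widehat{\Gamma}^{\rm s}(\overline{A};Y)$ with nonzero image in $H^0(\mathcal{O}_X(A-Y)|_Y)$; both exist by hypothesis. The map $\phi\mapsto\phi\cdot t_0^kt_1^{n-k}$ sends $H^0(D-E-kY)$ injectively into $H^0(D+nA-E-nY)$, and the place-wise estimate
\[
\|\phi t_0^kt_1^{n-k}\|_{v,\sup}^{\overline{D}+n\overline{A}}\leq\|\phi\|_{v,\sup}^{\overline{D}}\bigl(\|t_0\|_{v,\sup}^{\overline{A}}\bigr)^k\bigl(\|t_1\|_{v,\sup}^{\overline{A}}\bigr)^{n-k}\leq\|\phi\|_{v,\sup}^{\overline{D}}
\]
shows that $\widehat{\Gamma}^{\ast}(\overline{D};E+kY)$ is carried into $\widehat{\Gamma}^{\ast}(\overline{D}+n\overline{A};E+nY)$. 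The induced map on $Y$-restrictions is multiplication by the section $(t_0|_Y)^k(t_1|_Y)^{n-k}$ of $\mathcal{O}_X(nA-(n-k)Y)|_Y$, which is nonzero and injective on global sections; hence $\ah_{X|Y}(\overline{D};E+kY)\leq\ah_{X|Y}(\overline{D}+n\overline{A};E+nY)\leq\ah_{X|Y}(\overline{D}(\log 2)+n\overline{A};E+nY)$ by monotonicity, and summing over $k=0,\dots,n-1$ yields the required bound. The principal technical hurdle lies in the last injectivity claim: it is clean when $Y$ is integral, and in general the non-vanishing hypotheses imposed on $\overline{A}$ are precisely what one needs to ensure that the restricted product $(t_0|_Y)^k(t_1|_Y)^{n-k}$ is a non-zero-divisor on $Y$.
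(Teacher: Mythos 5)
The two left-hand inequalities, the initial application of Remark~\ref{rem:exact_sequence}, and the final multiplication trick by powers of $t_0\in\widehat{\Gamma}^{\rm s}(\overline{A})$ and $t_1\in\widehat{\Gamma}^{\rm s}(\overline{A};Y)$ all match the paper's intent. The genuine gap is in the middle step, where you bound $\log\#\,r\bigl(\widehat{\Gamma}^{\ast}(\overline{V}_0)\bigr)$. You set $\mathcal{F}_k:=r(\widehat{\Gamma}^{\ast}(\overline{V}_0))\cap(V_k/V_n)$ and claim that ``$\mathcal{F}_k/\mathcal{F}_{k+1}$ embeds into $\widehat{\Gamma}^{\ast}_{X|Y}(\overline{D};E+kY)$'', concluding $\log\#\mathcal{F}_0\leq\sum_{k=0}^{n-1}\ah_{X|Y}(\overline{D};E+kY)$. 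But $\mathcal{F}_k$ and $\mathcal{F}_{k+1}$ are not additive subgroups of $V_0/V_n$ --- they are images of the sets $\widehat{\Gamma}^{\ast}(\overline{V}_k)$, which are not closed under subtraction --- so the quotient is undefined and the implicit cardinality bound $\#\mathcal{F}_k\leq\#\pi_k(\mathcal{F}_k)\cdot\#\mathcal{F}_{k+1}$ (with $\pi_k:\mathcal{F}_k\to V_k/V_{k+1}$) fails. Concretely: if $\bar a,\bar a_0\in\mathcal{F}_k$ lie in the same fiber of $\pi_k$, then $a-a_0\in V_{k+1}$ but one only has $\|a-a_0\|_{\infty}^{\overline{V}}<2$, not $<1$, so $\bar a-\bar a_0$ lands in $r\bigl(\widehat{\Gamma}^{\ast}(\overline{V}_{k+1}(\log 2))\bigr)$ rather than in $\mathcal{F}_{k+1}$. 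Attempting to iterate this directly makes the $\log(2)$ factors accumulate across the $n$ steps, which would produce $\overline{D}(n\log 2)$ rather than $\overline{D}(\log 2)$.

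The paper's proof avoids this by applying Remark~\ref{rem:exact_sequence}\eqref{item: exact sequences combined} to the single exact sequence $0\to H^0(D-E-nY)\to H^0(D-E)\to H^0_{X|nY}(D-E)\to 0$ to produce the $\log(6)\rk_{\QQ}H^0(D)$ term, and then invoking Lemma~\ref{lem:quot_exact} on the commutative diagram whose rows are $0\to H^0(D-E-kY)\to H^0(D-E)\to H^0_{X|kY}(D-E)\to 0$ and its image under restriction to $(k+1)Y$. This yields the \emph{telescoping} inequality $\ah_{X|(k+1)Y}(\overline{D};E)-\ah_{X|kY}(\overline{D};E)\leq\ah_{X|Y}(\overline{D}(\log 2);E+kY)$: the left side is a difference of quantities defined with the \emph{original} $\overline{D}$-norms, so the $\log(2)$ appears exactly once per summand and does not compound. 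In other words, the $\overline{D}(\log 2)$ in the statement is not a free monotonicity relaxation as your last sentence suggests; it is precisely the error absorbed from Lemma~\ref{lem:quot_exact}, and your intermediate claim without the $\log 2$ is false. To repair your argument, replace the filtration count on $\mathcal{F}_{\sbullet}$ with the telescoping application of Lemma~\ref{lem:quot_exact} and carry the $\overline{D}(\log 2)$ throughout.
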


\begin{proof}
Let $1_Y$ denote the canonical section of $\OO_X(Y)$.
By applying Remark~\ref{rem:exact_sequence} \eqref{item: exact sequences combined} to the exact sequence
\[
0\to H^0\left(D-E-nY\right)\xrightarrow{\otimes 1_Y^{\otimes n}} H^0\left(D-E\right) \to H^0_{X|nY}\left(D-E\right)\to 0,
\]
one obtains
\begin{equation}
\ah\left(\overline{D};E\right)-\ah\left(\overline{D};E+nY\right)\leq\ah_{X|nY}\left(\overline{D};E\right)+\log(6)\rk_{\QQ} H^0(D). \label{eqn:FE_Main1}
\end{equation}
We are going to estimate the term $\ah_{X|nY}\left(\overline{D};E\right)$.
Applying Lemma~\ref{lem:quot_exact} to the diagram
{\footnotesize
\[
\xymatrix{
 0 \ar[r] & H^0_{X|Y}\left(D-E-kY\right) \ar[r] & H^0_{X|(k+1)Y}\left(D-E\right) \ar[r] & H^0_{X|kY}\left(D-E\right) \ar[r] & 0 \\
 0 \ar[r] & H^0\left(D-E-kY\right) \ar[r]^-{\otimes 1_Y^{\otimes k}} \ar[u] & H^0\left(D-E\right) \ar[r] \ar[u] & H^0_{X|kY}\left(D-E\right) \ar[r] \ar@{=}[u] & 0,
}
\]}%
one has
\begin{equation}
\ah_{X|(k+1)Y}\left(\overline{D};E\right)-\ah_{X|kY}\left(\overline{D};E\right)\leq\ah_{X|Y}\left(\overline{D}(\log(2));E+kY\right)\label{eqn:FE_Main2}
\end{equation}
for each $k$.
By adding \eqref{eqn:FE_Main2} for $k=1,2,\dots,n-1$, one obtains
\begin{align*}
\ah_{X|nY}\left(\overline{D};E\right) &\leq\sum_{k=0}^{n-1}\ah_{X|Y}\left(\overline{D}(\log(2));E+kY\right) \\
&\leq n\ah_{X|Y}\left(\overline{D}(\log(2))+n\overline{A};E+nY\right)
\end{align*}
as required.
To show the second inequality, we may assume $0\leq n\leq\ord_Y(E)$.
It also follows from the same arguments as above by replacing $E$ with $E-nY$.
\end{proof}

\begin{corollary}\label{cor:upper_bound}
Let $X$ be a normal, projective, and geometrically connected $K$-variety, let $Y$ be a prime Cartier divisor on $X$, and let $\left(\overline{D};E\right)\in\aDDiv_{\QQ,\QQ}(X)$ be a $Y$-big pair.
If $\ord_Y(E)>0$, then
\[
\lim_{r\downarrow 0}\frac{\avol\left(\overline{D};E\right)-\avol\left(\overline{D};E+rY\right)}{r}\leq(\dim X+1)\left.\left\langle\left(\overline{D};E\right)^{\cdot\dim X}\right\rangle\right|_Y.
\]
\end{corollary}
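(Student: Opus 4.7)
The plan is to pass from the sectionwise inequality Theorem~\ref{thm:FE} to an inequality between arithmetic volumes, then to the one-sided derivative, and finally to invoke the Fujita-type identification of Proposition~\ref{prop: arithmetic Fujita approximation}. By homogeneity (Corollary~\ref{cor:Yuan_main}\eqref{item: cor Yuan main homogeneity} and Remark~\ref{rem:arpin}\eqref{item: arpin homogeneity}) together with the continuity statements (Corollary~\ref{cor:Yuan_main}(4) and Remark~\ref{rem:arpin}\eqref{item: continuity arpin}), I may reduce to the case $(\overline{D};E)\in\aDDiv_{\ZZ,\ZZ}(X)$ with $E\geq 0$. Fix a w-ample adelic Cartier divisor $\overline{A}$ on $X$ satisfying the positivity hypotheses of Theorem~\ref{thm:FE}.

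For $r\in\QQ_{>0}$ and $m\in\ZZ_{\geq 1}$ sufficiently divisible so that $mr\in\ZZ$, apply Theorem~\ref{thm:FE} to $(m\overline{D},mE)$ with $n=mr$ to obtain
\begin{align*}
\ahss\bigl(m\overline{D};mE\bigr)-\ahss\bigl(m\overline{D};mE+mrY\bigr) &\leq mr\cdot\ahss_{X|Y}\bigl(m\overline{D}(\log 2)+mr\overline{A};\,mE+mrY\bigr)\\
&\qquad +\log(6)\rk_{\QQ}H^0(mD).
\end{align*}
Divide both sides by $m^{\dim X+1}/(\dim X+1)!$ and let $m\to\infty$. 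The left-hand side tends to $\avol(\overline{D};E)-\avol(\overline{D};E+rY)$, and the error term is $O(m^{\dim X})/m^{\dim X+1}\to 0$. For the main term I use the inclusion $\widehat{\Gamma}^{\rm ss}_{X|Y}\subseteq\widehat{\Gamma}^{\rm ss}_{CL(X|Y)}$ and Corollary~\ref{cor:Yuan_main}(1) to compare with $\avolq{X|Y}(\overline{D}+r\overline{A};E+rY)$, after absorbing the additive $\log 2$ shift in the Green function (see below). This yields
\[
\avol(\overline{D};E)-\avol(\overline{D};E+rY)\leq r(\dim X+1)\,\avolq{X|Y}\bigl(\overline{D}+r\overline{A};\,E+rY\bigr).
\]

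Dividing by $r$ and sending $r\downarrow 0$ through $\QQ$, the left-hand limit exists by the Brunn--Minkowski concavity of $\avol^{1/(\dim X+1)}$, which makes the difference quotient monotone. On the right, Corollary~\ref{cor:Yuan_main}(4), combined with Proposition~\ref{prop: arithmetic Fujita approximation}(1) and Remark~\ref{rem:arpin}\eqref{item: continuity arpin}, yields $\lim_{r\downarrow 0}\avolq{X|Y}(\overline{D}+r\overline{A};E+rY)=\avolq{X|Y}(\overline{D};E)$. A final application of Proposition~\ref{prop: arithmetic Fujita approximation}(1) identifies this limit with $\langle(\overline{D};E)^{\cdot\dim X}\rangle|_Y$, concluding the proof.

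The main obstacle is the asymptotic bookkeeping for the constant shift $\log 2$ in the Green function at infinity appearing on the right-hand side of Theorem~\ref{thm:FE}: after multiplying by $mr$ and dividing by $m^{\dim X+1}/(\dim X+1)!$, this shift must contribute only $o(1)$ as $m\to\infty$. This is achieved by passing through $\ahss_{\quot(X|Y)}$ (using $\widehat{\Gamma}^{\rm ss}_{X|Y}\subseteq\widehat{\Gamma}^{\rm ss}_{\quot(X|Y)}$) and applying the rescaling inequality of Remark~\ref{rem:Yuan_rescale} to the quotient norm on $H^0_{X|Y}(m(D+r A-E-rY))$, whose $\QQ$-rank is $O(m^{\dim X-1})$, so that the total correction from the $\log 2$ shift is $O(m^{\dim X-1})$ and vanishes after division by $m^{\dim X}$.
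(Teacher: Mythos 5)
Your overall strategy matches the paper's: reduce to $\aDDiv_{\ZZ,\ZZ}(X)$ with $E\geq 0$ via homogeneity, apply Theorem~\ref{thm:FE} to $(m\overline{D};mE)$ with $n=mr$, normalize by $m^{\dim X+1}/(\dim X+1)!$, identify the main term with a restricted volume (Corollary~\ref{cor:Yuan_main}(1)) and hence a positive intersection number (Proposition~\ref{prop: arithmetic Fujita approximation}\eqref{item: arithmetic Fujita approximation}), and take $r\downarrow 0$ using continuity (Remark~\ref{rem:arpin}\eqref{item: continuity arpin}). The one divergence, and the one place your proposal has a gap, is the treatment of the $\log 2$ shift on the right-hand side of Theorem~\ref{thm:FE}.

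You want to show that shift contributes only $o(m^{\dim X})$ by passing from $\ahss_{X|Y}$ to $\ahss_{\quot(X|Y)}$ and applying Remark~\ref{rem:Yuan_rescale} to the quotient norm, noting $\rk_{\QQ}H^0_{X|Y}=O(m^{\dim X-1})$. That step is fine, but it leaves you holding $\ahss_{\quot(X|Y)}\bigl(m(\overline{D}+r\overline{A});mE+mrY\bigr)$, whereas $\avolq{X|Y}$ in \eqref{eqn:defavolqbase} is by definition the normalized limit of the CL-hull counts $\ahss_{CL(X|Y)}$. The paper records no inclusion from $\widehat{\Gamma}^{\rm ss}_{\quot(X|Y)}$ into $\widehat{\Gamma}^{\rm ss}_{CL(X|Y)}$ (the only containment available is $\widehat{\Gamma}^{\rm ss}_{X|Y}\subseteq$ either), Corollary~\ref{cor:Yuan_main}(1) asserts convergence only for the CL-version, and the identity $\avolq{\quot(X|Y)}=\avolq{X|Y}$ is nowhere established, so your chain does not close. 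The cheaper, gap-free fix is exactly what the paper does: set $\overline{A}':=\overline{A}(\log 2)$; then for $mr\geq 1$ one has $(m\overline{D})(\log 2)+mr\overline{A}\leq m(\overline{D}+r\overline{A}')$, and $\ahss_{X|Y}\leq\ahss_{CL(X|Y)}$ holds by definition, so the $\log 2$ shift is absorbed into the fixed auxiliary divisor rather than having to vanish asymptotically. Replacing $\overline{A}$ by $\overline{A}'$ in your bound and letting $r\downarrow 0$ with Remark~\ref{rem:arpin}\eqref{item: continuity arpin} still yields the limit $(\dim X+1)\left.\left\langle\left(\overline{D};E\right)^{\cdot\dim X}\right\rangle\right|_Y$.
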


\begin{proof}
One may assume $\left(\overline{D};E\right)\in\aDDiv_{\ZZ,\ZZ}(X)$ and $E\geq 0$ by homogeneity (see \cite[Corollary~3.25]{IkomaCont} and Corollary~\ref{cor:Yuan_main} \eqref{item: cor Yuan main homogeneity}).
Let $\overline{A}$ be any adelic Cartier divisor on $X$ such that $\widehat{\Gamma}^{\rm s}_{X|Y}\left(\overline{A}\right)\neq\{0\}$ and $\widehat{\Gamma}^{\rm s}_{X|Y}(\overline{A};Y)\neq\{0\}$, and set $\overline{A}'\coloneqq \overline{A}(\log(2))$.
By continuity of the arithmetic volume functions (see \cite[Main Theorem]{IkomaCont} and Remark~\ref{rem:arpin} \eqref{item: continuity arpin}), one has
\[
\frac{\avol\left(\overline{D};E\right)-\avol\left(\overline{D};E+rY\right)}{r}\leq (\dim X+1)\left.\left\langle\left(\overline{D}+r\overline{A}';E+rY\right)^{\cdot\dim X}\right\rangle\right|_Y
\]
for any sufficiently small $r\in\RR_{>0}$ by Theorem~\ref{thm:FE} and Proposition~\ref{prop: arithmetic Fujita approximation}.
By taking $r\downarrow 0$, one obtains the assertion by using Remark~\ref{rem:arpin} \eqref{item: continuity arpin} again.
\end{proof}

\subsection{Estimation of Newton--Okounkov bodies}\label{subsec: estimation of NO bodies}

\begin{definition}\label{defn: Estimates II prelim}
Let $\mathscr{X}$ be a normal and projective $O_K$-model of $X$ such that there exists an arithmetic Cartier divisor $\overline{\mathscr{A}}$ on $\mathscr{X}$ such that $\overline{A}\leq\overline{\mathscr{A}}^{\rm ad}$ and $\mathscr{A}\cap X=A$ and such that the Zariski closure $\mathscr{Y}$ of $Y$ in $\mathscr{X}$ is Cartier.

Let $\rho_0\in\QQ_{>0}$ be as in Proposition~\ref{prop: Estimates II inclusions}.
By Lemma~\ref{lem:construction_of_the_metric}, there exists a $\mathscr{A}$-Green function $g^{\overline{\mathscr{A}}'}$ and a $\mathscr{Y}$-Green function $g^{\overline{\mathscr{Y}}}$ such that
\[
\widehat{\Gamma}^{\rm ss}_{X|Y}\left(m\overline{A};nY\right)\subset\widehat{\Gamma}^{\rm s}\left(\left.\mathcal{O}_{\mathscr{X}}\left(m\overline{\mathscr{A}}'-n\overline{\mathscr{Y}}\right)\right|_{\mathscr{Y}}\right)
\]
for every $m,n\in\ZZ_{\geq 0}$, where $\overline{\mathscr{A}}'\coloneqq\left(\mathscr{A},g^{\overline{\mathscr{A}}'}\right)$.
Let $\overline{\mathscr{M}}_0$ be a $\mathscr{Y}$-effective arithmetic Cartier divisor on $\mathscr{X}$ such that $\mathcal{O}_{\mathscr{X}}\left(\overline{\mathscr{M}}_0+\overline{\mathscr{Y}}\right)$ is also $\mathscr{Y}$-effective, and set $\overline{\mathscr{M}}\coloneqq \overline{\mathscr{A}}+\lceil\rho_0\rceil\overline{\mathscr{M}}_0$.
Then we obtain a natural inclusion
\[
\widehat{\Gamma}_{X|Y}^{\rm ss}\left(m\overline{A};mrY\right)\subset\widehat{\Gamma}^{\rm s}\left(\left.\mathcal{O}_{\mathscr{X}}\left(m\overline{\mathscr{M}}\right)\right|_{\mathscr{Y}}\right)
\]
for any $m\in\ZZ_{\geq 0}$ and $r\in\RR$ with $0\leq r\leq\rho_0$.

Fix a flag
\[
\mathscr{F}_{\sbullet}\colon \mathscr{F}_0\coloneqq \mathscr{X}\supset\mathscr{F}_1\coloneqq \mathscr{Y}\supset\mathscr{F}_2\supset\dots\supset\mathscr{F}_{\dim\mathscr{X}}=\{\xi\}
\]
on $\mathscr{X}$ such that
\[
\mathscr{F}_{\geq 1}\colon \mathscr{Y}=\mathscr{F}_1\supset\mathscr{F}_2\supset\dots\supset\mathscr{F}_{\dim\mathscr{X}}=\{\xi\}
\]
is a good flag on $\mathscr{Y}$ over a prime number $p$ (see Definition~\ref{defn: good flag}), and let $\bm{w}_{\mathscr{F}_{\sbullet}}$ denote the valuation map attached to $\mathscr{F}_{\sbullet}$.

Given an adelic $\RR$-Cartier divisor $\overline{M}$ on $X$, an $\RR$-Cartier divisor $N$ on $X$, a prime number $p$, and an $\varepsilon\in\RR_{>0}$, we set
\begin{equation}
\widetilde{C}\left(\overline{M},N,X,p,\varepsilon\right)\coloneqq \frac{[K:\QQ]I(M)\left(\log(4)\delta\left(\overline{M}\right)+\varepsilon\right)}{\log(p)}+\varepsilon [K:\QQ]I(N)
\end{equation}
(see Definition~\ref{defn:BasicConstants}).
\end{definition}

\begin{theorem}\label{thm:Estimates II}
Let $X$ be a normal, projective, and geometrically connected $K$-variety, let $Y$ be a prime Cartier divisor on $X$, and let $\overline{A}$ be a w-ample adelic Cartier divisor on $X$.
We use the same notation as in Definition~\ref{defn: Estimates II prelim}.
Let $\rho_0$ be as in Proposition~\ref{prop: Estimates II inclusions}.
Let $\overline{\mathscr{M}}$ be a $\mathscr{Y}$-effective arithmetic Cartier divisor on $\mathscr{X}$ such that $H^0_{X|Y}(\mathscr{M}|_X+Y)\neq\{0\}$ and such that
\[
\widehat{\Gamma}_{X|Y}^{\rm f}\left(m\overline{A};mrY\right)\subset H^0\left(\left.\mathcal{O}_{\mathscr{X}}\left(m\mathscr{M}\right)\right|_{\mathscr{Y}}\right)
\]
and
\[
\widehat{\Gamma}_{X|Y}^{\rm ss}\left(m\overline{A};mrY\right)\subset\widehat{\Gamma}^{\rm s}\left(\left.\mathcal{O}_{\mathscr{X}}\left(m\overline{\mathscr{M}}\right)\right|_{\mathscr{Y}}\right)
\]
for any $m\in\ZZ_{\geq 0}$ and $r\in\RR$ with $0\leq r\leq\rho_0$.
Fix an $\varepsilon\in\RR$ with $0<\varepsilon\leq 1$.
There then exist a $\lambda(\varepsilon,p)\in\ZZ_{\geq 1}$, which depends on $\overline{A}$, $\mathscr{M}$, $Y$, $X$, $\varepsilon$, and $p$, and positive real numbers $S$, $S'$, which depend only on $A$, $\overline{\mathscr{M}}$, $Y$, and $X$, such that
\begin{align*}
&-r\widetilde{C}\left(\left.\overline{\mathscr{M}}^{\rm ad}\right|_Y,A|_Y+\rho_0\mathscr{M}|_Y,Y,p,\varepsilon\right)m^{\dim X+1} \\
&\qquad\qquad\qquad\qquad\qquad\qquad -[K:\QQ]I(A)m^{\dim X}\log(m)-Sm^{\dim X} \\
&\qquad\qquad \leq\ahss\left(m\overline{A}\right)-\ahss\left(m\overline{A};mrY\right)-\#\bm{w}_{\mathscr{F}_{\sbullet}}\left(\widehat{\Gamma}^{\rm ss}\left(m\overline{A}\right)\setminus\{0\}\right)\log(p) \\
&\qquad\qquad\qquad\qquad\qquad\qquad +\#\bm{w}_{\mathscr{F}_{\sbullet}}\left(\widehat{\Gamma}^{\rm ss}\left(m\overline{A};mrY\right)\setminus\{0\}\right)\log(p) \\
&\qquad\qquad \leq r\widetilde{C}\left(\left.\overline{\mathscr{M}}^{\rm ad}\right|_Y,A|_Y+\rho_0\mathscr{M}|_Y,Y,p,\varepsilon\right)m^{\dim X+1}+S'm^{\dim X}
\end{align*}
for any $m\in\ZZ$ and $r\in\RR$ with $m\geq \lambda(\varepsilon,p)$ and $0<r\leq\rho_0$, respectively.
\end{theorem}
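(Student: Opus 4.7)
The strategy is to reduce the quantity in brackets to a sum of restricted counts on the slices of the filtration of $H^0(mA)$ by order of vanishing along $Y$, and then to apply Yuan-Moriwaki type estimates (Theorem~\ref{thm:YuanMoriwaki}) to each slice individually. Concretely, I would split the proof into three steps.

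Step~1 (Flag decomposition). Since $\mathscr{F}_1=\mathscr{Y}$, the valuation of any nonzero section $\phi$ decomposes as $\bm{w}_{\mathscr{F}_{\sbullet}}(\phi)=(\ord_Y(\phi),\bm{w}_{\mathscr{F}_{\geq 1}}(\overline{\phi}))$, where $\overline{\phi}=(\phi\cdot 1_Y^{-\ord_Y(\phi)})|_{\mathscr{Y}}$. Stratifying $\widehat{\Gamma}^{\rm ss}(m\overline{A})$ by $\ord_Y$ and subtracting the sections with $\ord_Y\geq mr$ produces the exact identity
\[
\#\bm{w}_{\mathscr{F}_{\sbullet}}\left(\widehat{\Gamma}^{\rm ss}(m\overline{A})\setminus\{0\}\right)-\#\bm{w}_{\mathscr{F}_{\sbullet}}\left(\widehat{\Gamma}^{\rm ss}(m\overline{A};mrY)\setminus\{0\}\right) = \sum_{k=0}^{\lceil mr\rceil-1}\#\bm{w}_{\mathscr{F}_{\geq 1}}\left(\widehat{\Gamma}^{\rm ss}_{X|Y}(m\overline{A};kY)\setminus\{0\}\right).
\]
Step~2 (Yuan-Moriwaki on each slice). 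The hypotheses in Definition~\ref{defn: Estimates II prelim} ensure that $\widehat{\Gamma}^{\rm ss}_{X|Y}(m\overline{A};kY)\subset\widehat{\Gamma}^{\rm s}(\mathcal{O}_{\mathscr{X}}(m\overline{\mathscr{M}})|_{\mathscr{Y}})$ uniformly in $0\leq k\leq m\rho_0$, so Theorem~\ref{thm:YuanMoriwaki} applied to each slice (on $\mathscr{Y}$ with the good flag $\mathscr{F}_{\geq 1}$) yields
\[
\left|\log\#\widehat{\Gamma}^{\rm ss}_{X|Y}(m\overline{A};kY)-\#\bm{w}_{\mathscr{F}_{\geq 1}}(\widehat{\Gamma}^{\rm ss}_{X|Y}(m\overline{A};kY)\setminus\{0\})\log(p)\right|\leq\frac{C(\overline{\mathscr{M}}^{\rm ad}|_Y,Y,p,m)}{\log(p)}m^{\dim Y+1}.
\]
Choosing $\lambda(\varepsilon,p)$ so large that the $m$-vanishing part of $C(\cdot,\cdot,\cdot,m)$ is bounded by $\varepsilon$, and summing over the $\lceil mr\rceil$ slices, produces precisely the first summand $\frac{[K:\QQ]I(\overline{\mathscr{M}}^{\rm ad}|_Y)(\log(4)\delta+\varepsilon)}{\log(p)}\cdot rm^{\dim X+1}$ of $\widetilde{C}\cdot rm^{\dim X+1}$.

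Step~3 (Comparing $\ah$-differences with slice sums). The heart of the proof is to show
\[
\left|\ah(m\overline{A})-\ah(m\overline{A};mrY)-\sum_{k=0}^{\lceil mr\rceil-1}\log\#\widehat{\Gamma}^{\rm ss}_{X|Y}(m\overline{A};kY)\right|\leq\varepsilon[K:\QQ]I(A|_Y+\rho_0\mathscr{M}|_Y)rm^{\dim X+1}+\text{(lower order)}.
\]
A naive telescoping via Remark~\ref{rem:exact_sequence}\eqref{item: exact sequences combined} on each exact sequence $0\to V_{k+1}\to V_k\to V_k/V_{k+1}\to 0$ with $V_k=H^0(mA-kY)$ would accumulate a $\log(6)$ loss per slice, giving the unacceptable coefficient $\log(6)\cdot I(A)$. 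To refine this, for the lower bound I would apply Lemma~\ref{lem:filtration_est} once to the single long filtration $V_1\supset V_2\supset\cdots\supset V_{mr+1}$ (extended arbitrarily to $\{0\}$ if needed), paying only one logarithmic rescaling penalty $(\log(mr)+\log(3))I(A)m^{\dim X}$, which matches the explicit $[K:\QQ]I(A)m^{\dim X}\log(m)+Sm^{\dim X}$ slack in the lower bound. For the upper bound I would pass from $\widehat{\Gamma}^{\rm ss}_{X|Y}$ to $\widehat{\Gamma}^{\rm ss}_{\quot(X|Y)}$ (with $\pm\varepsilon$-shifts of $\overline{A}$) via Proposition~\ref{prop: Estimates II inclusions}, then exploit the cleaner behaviour of the quotient norm under the exact-sequence estimate of Lemma~\ref{lem:quot_exact}. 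The per-slice $\varepsilon$-shift costs at most $(m\varepsilon+\log(3))\cdot\rk_KH^0_{X|Y}(mA-kY)\leq(m\varepsilon+\log(3))I(A|_Y+\rho_0\mathscr{M}|_Y)m^{\dim Y}$, and summing over the $\lceil mr\rceil$ slices yields the second summand $\varepsilon[K:\QQ]I(A|_Y+\rho_0\mathscr{M}|_Y)rm^{\dim X+1}$ of $\widetilde{C}\cdot rm^{\dim X+1}$.

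\textbf{Main obstacle.} Step~3 is the main difficulty: the per-slice exact-sequence error does not have an $\varepsilon$-small coefficient, so one cannot sum naively $\lceil mr\rceil$ times. The two asymmetric devices (a single application of Lemma~\ref{lem:filtration_est} for the lower bound, and an $\varepsilon$-shifted transfer via Proposition~\ref{prop: Estimates II inclusions} for the upper bound) account both for the shape of the bound $\widetilde{C}$ and for the asymmetry of the two inequalities in the theorem (the $\log(m)$ factor appearing only on the lower side). Once these are in place, the threshold $\lambda(\varepsilon,p)$ must simultaneously dominate the Yuan-Moriwaki convergence rate in Theorem~\ref{thm:YuanMoriwaki}, the Zhang-type size inequality used to produce a $\ZZ$-basis of each $V_{k+1}$ out of strictly small sections, and the inequalities ensuring the inclusion of Proposition~\ref{prop: Estimates II inclusions}; combining Steps~1--3 then gives the two sandwiching estimates of the theorem.
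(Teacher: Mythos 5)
Your proposal captures exactly the paper's three-step architecture: the flag decomposition, the single application of Lemma~\ref{lem:filtration_est} to the long filtration (avoiding a per-slice $\log 6\cdot I(A)$ loss) on the lower side, and the telescoping via Lemma~\ref{lem:quot_exact} (whose per-slice error $\log(6)\rk_\QQ H^0_{X|Y}$ lives one dimension lower and so sums to only $O(rm^{\dim X})$, absorbed by $S'$) on the upper side; this is precisely what the paper does. One small slip: in your Step~2 you apply Theorem~\ref{thm:YuanMoriwaki} directly to $\widehat{\Gamma}^{\rm ss}_{X|Y}$, but that theorem only covers $?\in\{CL(X|Y),\quot(X|Y)\}$, so the $\pm\varepsilon$-shifts through Proposition~\ref{prop: Estimates II inclusions} (which you defer to Step~3) must already enter here to sandwich $\#\bm{w}_{\mathscr{F}_{\geq 1}}(\widehat{\Gamma}^{\rm ss}_{X|Y}(m\overline{A};nY))$ between the counts of $\widehat{\Gamma}^{\rm ss}_{\quot(X|Y)}(m(\overline{A}(-\varepsilon));nY)$ and $\widehat{\Gamma}^{\rm ss}_{\quot(X|Y)}(m\overline{A};nY)$, as the paper does in its Step~1.
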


\begin{proof}
Let $\overline{M}\coloneqq \overline{\mathscr{M}}^{\rm ad}$.
We divide the proof into three steps.

\Proofstep
Obviously, we have
\begin{align*}
&\#\bm{w}_{\mathscr{F}_{\sbullet}}\left(\widehat{\Gamma}^{\rm ss}\left(m\overline{A}\right)\setminus\{0\}\right)-\#\bm{w}_{\mathscr{F}_{\sbullet}}\left(\widehat{\Gamma}^{\rm ss}\left(m\overline{A};mrY\right)\setminus\{0\}\right) \\
&\qquad\qquad\qquad\qquad\qquad\qquad =\sum_{n=0}^{\lceil mr\rceil-1}\#\bm{w}_{\mathscr{F}_{\geq 1}}\left(\widehat{\Gamma}_{X|Y}^{\rm ss}\left(m\overline{A};nY\right)\setminus\{0\}\right).
\end{align*}
For the given $\varepsilon\in\RR_{>0}$, we can find a $\lambda'(\varepsilon,p)\in\ZZ_{\geq 1}$ such that
\begin{equation}
\log(4p)\left(\log(4p)+\log\left(I(M|_Y)m^{\dim Y}\right)\right)\leq\varepsilon m
\end{equation}
for any $m\in\ZZ$ with $m\geq\lambda'(\varepsilon,p)$.
By Proposition~\ref{prop: Estimates II inclusions}, there exists an integer $\lambda(\varepsilon,p)$ such that $\lambda(\varepsilon,p)\geq\max\left\{\lambda'(\varepsilon,p),1/\rho_0\right\}$ and such that
\[
\widehat{\Gamma}_{\quot(X|Y)}^{\rm ss}\left(m\left(\overline{A}(-\varepsilon)\right);nY\right)\subset\widehat{\Gamma}_{X|Y}^{\rm ss}\left(m\overline{A};nY\right)
\]
and
\[
\widehat{\Gamma}_{\quot(X|Y)}^{\rm ss}\left(m\overline{A};nY\right)\subset\widehat{\Gamma}_{X|Y}^{\rm ss}\left(m\left(\overline{A}(\varepsilon)\right);nY\right)
\]
hold for any $m,n\in\ZZ_{\geq 1}$ with $n/m\leq\rho_0$ and $m\geq\lambda(\varepsilon,p)$.

Put
\begin{equation}
\widetilde{C}_0\left(\overline{M}|_Y,Y,\varepsilon\right)\coloneqq [K_Y:\QQ]I(M|_Y)\left(\log(4)\delta\left(\overline{M}|_Y\right)+\varepsilon\right).
\end{equation}
By Theorem~\ref{thm:YuanMoriwaki} and Remark~\ref{rem:Yuan_rescale}, we have
\begin{align}
&\sum_{n=0}^{\lceil mr\rceil-1}\ahss_{\quot(X|Y)}\left(m\overline{A};nY\right)-(\varepsilon m+\log(3))\sum_{n=0}^{\lceil mr\rceil-1}\rk_{\QQ} H^0_{X|Y}(mA-nY) \nonumber\\
&\qquad\qquad\qquad\qquad\qquad\qquad -\frac{\widetilde{C}_0\left(\overline{M}|_Y,Y,\varepsilon\right)}{\log(p)}m^{\dim X}(mr+1) \nonumber\\
&\qquad \leq\sum_{n=0}^{\lceil mr\rceil-1}\ahss_{\quot(X|Y)}\left(m\left(\overline{A}(-\varepsilon)\right);nY\right)-\frac{\widetilde{C}_0\left(\overline{M}|_Y,Y,\varepsilon\right)}{\log(p)}m^{\dim X}(mr+1) \nonumber\\
&\qquad \leq\sum_{n=0}^{\lceil mr\rceil-1}\#\bm{w}_{\mathscr{F}_{\geq 1}}\left(\widehat{\Gamma}_{\quot(X|Y)}^{\rm ss}\left(m\left(\overline{A}(-\varepsilon)\right);nY\right)\setminus\{0\}\right)\log(p) \nonumber\\
&\qquad \leq\#\bm{w}_{\mathscr{F}_{\sbullet}}\left(\widehat{\Gamma}^{\rm ss}\left(m\overline{A}\right)\setminus\{0\}\right)\log(p) \nonumber\\
&\qquad\qquad\qquad\qquad\qquad\qquad -\#\bm{w}_{\mathscr{F}_{\sbullet}}\left(\widehat{\Gamma}^{\rm ss}\left(m\overline{A};mrY\right)\setminus\{0\}\right)\log(p) \nonumber\\
&\qquad \leq\sum_{n=0}^{\lceil mr\rceil-1}\#\bm{w}_{\mathscr{F}_{\geq 1}}\left(\widehat{\Gamma}_{\quot(X|Y)}^{\rm ss}\left(m\overline{A};nY\right)\setminus\{0\}\right)\log(p) \nonumber\\
&\qquad \leq\sum_{n=0}^{\lceil mr\rceil-1}\ahss_{\quot(X|Y)}\left(m\overline{A};nY\right)+\frac{\widetilde{C}_0\left(\overline{M}|_Y,Y,\varepsilon\right)}{\log(p)}m^{\dim X}(mr+1). \label{eqn: Estimates II Step 1}
\end{align}

\Proofstep
In this step, we show the lower bound.
For that purpose, we consider the filtration by natural inclusions
\[
H^0(mA)\supset H^0(mA-Y)\supset\dots\supset H^0(mA-\lceil mr\rceil Y)\supset\{0\}
\]
and apply Lemma~\ref{lem:filtration_est} to it.
By using Remark~\ref{rem:Yuan_rescale} again, we have
\begin{align*}
&\ahss\left(m\overline{A}\right)-\ahss\left(m\overline{A};mrY\right) \\
&\qquad \geq\ahss\left(\left(m\overline{A}\right)(\log(\lceil mr\rceil))\right)-\ahss\left(m\overline{A};mrY\right)-\log(3\lceil mr\rceil)\rk_{\QQ} H^0(mA) \\
&\qquad \geq\sum_{n=0}^{\lceil mr\rceil-1}\ahss_{X|Y}\left(m\overline{A};nY\right)-\log(3\lceil mr\rceil)\rk_{\QQ} H^0(mA) \\
&\qquad \geq\sum_{n=0}^{\lceil mr\rceil-1}\ahss_{\quot(X|Y)}\left(m\left(\overline{A}(-\varepsilon)\right);nY\right)-\log(3\lceil mr\rceil)\rk_{\QQ} H^0(mA) \\
&\qquad \geq\sum_{n=0}^{\lceil mr\rceil-1}\ahss_{\quot(X|Y)}\left(m\overline{A};nY\right)-(\varepsilon m+\log(3))\sum_{n=0}^{\lceil mr\rceil-1}\rk_{\QQ} H^0_{X|Y}(mA-nY) \\
&\qquad\qquad\qquad\qquad\qquad\qquad -\log(3\lceil mr\rceil)\rk_{\QQ} H^0(mA),
\end{align*}
which implies by \eqref{eqn: Estimates II Step 1} that
\begin{align*}
&\ahss\left(m\overline{A}\right)-\ahss\left(m\overline{A};mrY\right)-\#\bm{w}_{\mathscr{F}_{\sbullet}}\left(\widehat{\Gamma}^{\rm ss}\left(m\overline{A}\right)\setminus\{0\}\right)\log(p) \\
&\qquad\qquad\qquad\qquad\qquad\qquad +\#\bm{w}_{\mathscr{F}_{\sbullet}}\left(\widehat{\Gamma}^{\rm ss}\left(m\overline{A};mrY\right)\setminus\{0\}\right)\log(p) \\
&\quad \geq -\frac{\widetilde{C}_0\left(\overline{M}|_Y,Y,\varepsilon\right)}{\log(p)}m^{\dim X}(mr+1) \\
&\qquad -(\varepsilon m+\log(3))\sum_{n=0}^{\lceil mr\rceil-1}\rk_{\QQ} H^0_{X|Y}(mA-nY)-\log(3\lceil mr\rceil)\rk_{\QQ} H^0(mA) \\
&\quad \geq -r\widetilde{C}\left(\overline{M}|_Y,A|_Y+\rho_0M|_Y,Y,p,\varepsilon\right)m^{\dim X+1}-[K:\QQ]I(A)m^{\dim X}\log(m) \\
&\qquad\qquad\qquad\qquad\qquad\qquad -Sm^{\dim X}
\end{align*}
for any $m\geq\lambda(\varepsilon,p)$.
Here we set
\begin{align*}
&S\coloneqq \frac{\widetilde{C}_0\left(\overline{M}|_Y,Y,1\right)}{\log(2)}+(1+(\rho_0+1)\log(3))[K_Y:\QQ]I(A|_Y+\rho_0M|_Y) \\
&\qquad\qquad\qquad\qquad\qquad\qquad\qquad +\log(3(\rho_0+1))[K:\QQ]I(A).
\end{align*}

\Proofstep
Applying Remark~\ref{rem:exact_sequence} \eqref{item: exact sequences combined} to the exact sequence
\[
0\to H^0(mA-\lceil mr\rceil Y)\xrightarrow{\otimes 1_Y^{\otimes \lceil mr\rceil}} H^0(mA)\to H^0_{X|\lceil mr\rceil Y}(mA)\to 0,
\]
we obtain
\begin{align}
&-\log(6)\rk_{\QQ} H^0(mA)\leq\ahss\left(m\overline{A}\right)-\ahss\left(m\overline{A};mrY\right)-\ahss_{X|\lceil mr\rceil Y}\left(m\overline{A}\right) \nonumber\\
&\qquad\qquad\qquad\qquad\qquad\qquad \leq\log(6)\rk_{\QQ} H^0(mA-\lceil mr\rceil Y).
\end{align}
By applying Lemma~\ref{lem:quot_exact} to the diagram
\[
\xymatrix{0 \ar[r] & H^0_{X|Y}(mA-nY) \ar[r] & H^0_{X|(n+1)Y}(mA) \ar[r] & H^0_{X|nY}(mA) \ar[r] & 0 \\
0 \ar[r] & H^0(mA-nY) \ar[r]^-{\otimes 1_Y^{\otimes n}} \ar[u] & H^0(mA) \ar[r] \ar[u] & H_{X|nY}^0(mA) \ar[r] \ar@{=}[u] & 0
}
\]
for each $n\in\ZZ_{\geq 1}$, we have
\begin{align}
&\ahss_{X|(n+1)Y}\left(m\overline{A}\right)-\ahss_{X|nY}\left(m\overline{A}\right)\leq\ahss_{X|Y}\left(\left(m\overline{A}\right)(\log(2));nY\right) \nonumber\\
&\qquad\qquad \leq\ahss_{\quot(X|Y)}\left(m\overline{A};nY\right)+\log(6)\rk_{\QQ} H^0_{X|Y}(mA-nY) \label{eqn:Estimates II 1}
\end{align}
by Remark~\ref{rem:Yuan_rescale}.
Therefore, by summing up \eqref{eqn:Estimates II 1} for $n=1,\dots,\lceil mr\rceil-1$, we have
\begin{align*}
&\ahss_{X|\lceil mr\rceil Y}\left(m\overline{A}\right) \\
&\qquad \leq\sum_{n=0}^{\lceil mr\rceil-1}\ahss_{\quot(X|Y)}\left(m\overline{A};nY\right)+\log(6)\sum_{n=1}^{\lceil mr\rceil-1}\rk_{\QQ} H^0_{X|Y}(mA-nY),
\end{align*}
which leads to the upper bound of the theorem as
\begin{align*}
&\ahss\left(m\overline{A}\right)-\ahss\left(m\overline{A};mrY\right)-\#\bm{w}_{\mathscr{F}_{\sbullet}}\left(\widehat{\Gamma}^{\rm ss}\left(m\overline{A}\right)\setminus\{0\}\right)\log(p) \\
&\qquad\qquad\qquad\qquad\qquad\qquad +\#\bm{w}_{\mathscr{F}_{\sbullet}}\left(\widehat{\Gamma}^{\rm ss}\left(m\overline{A};mrY\right)\setminus\{0\}\right)\log(p) \\
& \leq \frac{\widetilde{C}_0\left(\overline{M}|_Y,Y,\varepsilon\right)}{\log(p)}m^{\dim X}(mr+1)+(\varepsilon m+\log(18))\sum_{n=0}^{\lceil mr\rceil-1}\rk_{\QQ} H^0_{X|Y}(mA-nY) \\
&\qquad\qquad\qquad\qquad\qquad\qquad +\log(6)\rk_{\QQ} H^0(mA-\lceil mr\rceil Y) \\
& \leq r\widetilde{C}\left(\overline{M}|_Y,A|_Y+\rho_0M|_Y,Y,p,\varepsilon\right)m^{\dim X+1}+S'm^{\dim X}
\end{align*}
for any $m\geq\lambda(\varepsilon,p)$ by \eqref{eqn: Estimates II Step 1}.
Here we set
\begin{align*}
&S'\coloneqq \frac{\widetilde{C}_0\left(\overline{M}|_Y,Y,1\right)}{\log(2)}+(1+(\rho_0+1)\log(18))[K_Y:\QQ]I(A|_Y+\rho_0M|_Y) \\
&\qquad\qquad\qquad\qquad\qquad\qquad\qquad +\log(6)[K:\QQ]I(A).
\end{align*}
\end{proof}

\subsection{Yuan-type inequality}\label{subsec:Yuan}

\begin{definition}\label{defn:Yuan_Moriwaki_Newton_Okounkov}
Let $X$ be a normal, projective, and geometrically connected $K$-variety, let $Y$ be a prime Cartier divisor on $X$, and let $\left(\overline{D};E\right)\in\aDDiv_{\ZZ,\ZZ}(X)$ be a $Y$-big pair on $X$.
We choose a $\rho_0'\in\QQ_{>0}$ such that $\left(\overline{D};E+rY\right)$ is $Y$-big for every $r\in\RR$ with $0\leq r\leq \rho_0'$.
As in Definition~\ref{defn: Estimates II prelim}, there exists a normal and projective $O_K$-model $\mathscr{X}$ of $X$ such that the Zariski closure $\mathscr{Y}$ of $Y$ in $\mathscr{X}$ is Cartier and such that there exists a $\mathscr{Y}$-effective arithmetic Cartier divisor $\overline{\mathscr{M}}$ on $\mathscr{X}$ such that
\[
\widehat{\Gamma}_{X|Y}^{\rm f}\left(m\overline{D};mE+mrY\right)\subset H^0\left(\left.\mathcal{O}_{\mathscr{X}}\left(m\mathscr{M}\right)\right|_{\mathscr{Y}}\right)
\]
and
\[
\widehat{\Gamma}_{X|Y}^{\rm ss}\left(m\overline{D};mE+mrY\right)\subset \widehat{\Gamma}^{\rm s}\left(\left.\mathcal{O}_{\mathscr{X}}\left(m\overline{\mathscr{M}}\right)\right|_{\mathscr{Y}}\right)
\]
for every $m\in\ZZ_{\geq 0}$ and $r\in\RR$ with $0\leq r\leq \rho_0'$.
Let $\mathscr{F}_{\sbullet}$ be as in Definition~\ref{defn: Estimates II prelim}, and let $\bm{w}_{\mathscr{F}_{\sbullet}}$ denote the valuation map attached to $\mathscr{F}_{\sbullet}$.
We then define
\[
\widehat{\Delta}^{\mathscr{F}_{\sbullet}}_{\rm YM}\left(\overline{D};E\right)\coloneqq \overline{\left(\bigcup_{m\in\ZZ_{\geq 1}}\frac{1}{m}\bm{w}_{\mathscr{F}_{\sbullet}}\left(\widehat{\Gamma}^{\rm ss}\left(m\overline{D};mE\right)\setminus\{0\}\right)\right)}.
\]
More generally, for any $Y$-big pair $\left(\overline{D};E\right)\in\aDDiv_{\QQ,\QQ}(X)$, we define
\[
\widehat{\Delta}^{\mathscr{F}_{\sbullet}}_{\rm YM}\left(\overline{D};E\right)\coloneqq \frac{1}{n}\widehat{\Delta}^{\mathscr{F}_{\sbullet}}_{\rm YM}\left(n\overline{D};nE\right),
\]
where $n$ denotes any positive integer such that $\left(n\overline{D};nE\right)\in\aDDiv_{\ZZ,\ZZ}(X)$.
\end{definition}

\begin{lemma}\label{lem:FE3_bounded}
We use the same notation as in Definition~\ref{defn:Yuan_Moriwaki_Newton_Okounkov}.
\begin{enumerate}
\item $\widehat{\Delta}^{\mathscr{F}_{\sbullet}}_{\rm YM}\left(\overline{D};E\right)$ is a compact convex body in $\RR^{\dim X+1}$.
\item If $\left(\overline{D};E\right)\in\aDDiv_{\ZZ,\ZZ}(X)$, then one has
\[
\vol_{\RR^{\dim X+1}}\left(\widehat{\Delta}^{\mathscr{F}_{\sbullet}}_{\rm YM}\left(\overline{D};E\right)\right)=\lim_{\substack{m\in\ZZ_{\geq 1}, \\ m\to\infty}}\frac{\#\bm{w}_{\mathscr{F}_{\sbullet}}\left(\widehat{\Gamma}^{\rm ss}\left(m\overline{D};mE\right)\setminus\{0\}\right)}{m^{\dim X+1}}\in\RR_{>0}.
\]
\end{enumerate}
\end{lemma}

\begin{proof}
(1): We may assume that $\left(\overline{D};E\right)\in\aDDiv_{\ZZ,\ZZ}(X)$.
Let $B$ be an ample Cartier divisor on $X$.
If $\widehat{\Gamma}^{\rm ss}\left(m\overline{D};nE\right)\neq\{0\}$, then
\[
\deg\left((mD-mE-nY)\cdot B^{\cdot(\dim X-1)}\right)\geq 0.
\]
Thus
\[
-m\ord_Y(D-E)\leq w_1(\phi)\leq m\frac{\deg\left((D-E)\cdot B^{\cdot(\dim X-1)}\right)}{\deg\left(Y\cdot B^{\cdot(\dim X-1)}\right)}
\]
for any $\phi\in\widehat{\Gamma}^{\rm ss}\left(m\overline{D};mE\right)\setminus\{0\}$ (see Notation and terminology~\ref{NC:flag}).
Moreover, by \cite[Lemma~2.4]{Yuan09} applied to $\left.\mathcal{O}_{\mathscr{X}}\left(\overline{\mathscr{M}}\right)\right|_{\mathscr{Y}}$, one can find $a,b\in\RR_{>0}$ such that $\widehat{\Delta}^{\mathscr{F}_{\sbullet}}_{\rm YM}\left(\overline{D};E\right)\subset [-a,b]^{\dim X+1}$.

(2): Since $\left(\overline{D};E\right)$ is $Y$-big, the semigroup
\[
\left\{\left(\bm{w}_{\mathscr{F}_{\sbullet}}(\phi),m\right)\,\colon \,\phi\in\widehat{\Gamma}^{\rm ss}\left(m\overline{D};mE\right)\setminus\{0\},\,m\in\ZZ_{\geq 0}\right\}
\]
generates $\ZZ^{\dim X+2}$ (see \cite[Proposition~5.2]{MoriwakiEst}).
Hence the assertions follow from \cite[Proposition~2.1]{Lazarsfeld_Mustata08} (see also \cite[Corollaire~1.14]{BoucksomBourbaki}).
\end{proof}

\begin{theorem}\label{thm: general lower bound}
Let $X$ be a normal, projective, and geometrically connected $K$-variety, let $Y$ be a prime Cartier divisor on $X$, let $\left(\overline{D};E\right)\in\aDDiv_{\ZZ,\ZZ}(X)$ be a $Y$-big pair on $X$, and let $\overline{A}\in\aDiv_{\ZZ}(X)$ be a w-ample adelic Cartier divisor on $X$.
Suppose that $\left(\overline{D}-\overline{A};E\right)$ is $Y$-big.
Let $\rho_0$ be as in Proposition~\ref{prop: Estimates II inclusions}.
Let $\overline{\mathscr{M}}$ be a $\mathscr{Y}$-effective arithmetic Cartier divisor on $\mathscr{X}$ such that $H^0_{X|Y}(\mathscr{M}|_X+Y)\neq\{0\}$ and such that
\[
\widehat{\Gamma}_{X|Y}^{\rm f}\left(m\overline{A};mrY\right)\subset H^0\left(\left.\mathcal{O}_{\mathscr{X}}\left(m\mathscr{M}\right)\right|_{\mathscr{Y}}\right)
\]
and
\[
\widehat{\Gamma}_{X|Y}^{\rm ss}\left(m\overline{A};mrY\right)\subset\widehat{\Gamma}^{\rm s}\left(\left.\mathcal{O}_{\mathscr{X}}\left(m\overline{\mathscr{M}}\right)\right|_{\mathscr{Y}}\right)
\]
for every $m\in\ZZ_{\geq 0}$ and $r\in\RR$ with $0\leq r\leq\rho_0$.
Fix an $\varepsilon\in\RR$ with $0<\varepsilon\leq 1$.
There then exist a $\mu(\varepsilon,p)\in\ZZ_{\geq 1}$, which depends on $\left(\overline{D};E\right)$, $\overline{A}$, $\mathscr{M}$, $Y$, $X$, $\varepsilon$, and $p$, and a positive real number $S$, which depends only on $D$, $A$, $\overline{\mathscr{M}}$, $Y$, and $X$, such that
\begin{align*}
&\ahss\left(m\overline{D};mE\right)-\ahss\left(m\overline{D};mE+mrY\right)-\#\bm{w}_{\mathscr{F}_{\sbullet}}\left(\widehat{\Gamma}^{\rm ss}\left(m\overline{A}\right)\setminus\{0\}\right)\log(p) \\
&\qquad\qquad\qquad\qquad\qquad\qquad +\#\bm{w}_{\mathscr{F}_{\sbullet}}\left(\widehat{\Gamma}^{\rm ss}\left(m\overline{A};mrY\right)\setminus\{0\}\right)\log(p) \\
&\qquad\qquad \geq -r\widetilde{C}\left(\left.\overline{\mathscr{M}}^{\rm ad}\right|_Y,A|_Y+\rho_0\mathscr{M}|_Y,Y,p,\varepsilon\right)m^{\dim X+1} \\
&\qquad\qquad\qquad\qquad\qquad\qquad -[K:\QQ]I(D)m^{\dim X}\log(m)-Sm^{\dim X}
\end{align*}
for every $m\in\ZZ$ and $r\in\RR$ with $m\geq \mu(\varepsilon,p)$ and $0<r\leq\rho_0$, respectively.
\end{theorem}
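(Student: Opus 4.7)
The plan is to adapt the lower-bound direction (Step~2) in the proof of Theorem~\ref{thm:Estimates II} from a w-ample adelic divisor to the $Y$-big pair $(\overline{D};E)$; the hypothesis that $(\overline{D}-\overline{A};E)$ is itself $Y$-big is exactly what produces a multiplier that embeds strictly small sections of $m\overline{A}$ into those of $(m\overline{D};mE)$ while preserving the restriction to $Y$. First, from the $Y$-bigness of $(\overline{D}-\overline{A};E)$ I extract a w-ample $\overline{C}\in\aDiv_{\ZZ}(X)$ such that $(\overline{D}-\overline{A}-\overline{C};E)>_Y 0$. Since $\overline{C}$ is w-ample and $C$ is ample, for all $m$ sufficiently large I can select $\phi_m\in\widehat{\Gamma}^{\rm ss}(m\overline{C})$ whose image in $H^0_{X|Y}(mC)$ is nonzero (using that $H^0(mC)$ is $\QQ$-generated by $\widehat{\Gamma}^{\rm ss}(m\overline{C})$ and surjects onto $H^0_{X|Y}(mC)$). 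A short Green-function computation — using $g_v^{\overline{D}-\overline{A}-\overline{C}}\geq 0$ for finite $v$, $\inf_{X_\infty^{\rm an}}g_\infty^{\overline{D}-\overline{A}-\overline{C}}>0$, and $D-A-C\geq\max\{0,E\}$ — then promotes $\phi_m$ to an element of $\widehat{\Gamma}^{\rm ss}(m(\overline{D}-\overline{A});mE)$ whose image in $H^0_{X|Y}(m(D-A-E))$ remains nonzero, as $Y\not\subset\Supp(D-A-C-\max\{0,E\})$.

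With $\phi_m$ in hand, multiplication yields, for each $n\geq 0$, a $\ZZ$-linear inclusion $\widehat{\Gamma}^{\rm ss}(m\overline{A};nY)\hookrightarrow\widehat{\Gamma}^{\rm ss}(m\overline{D};mE+nY)$, and the induced map on $Y$-restrictions is injective since $\phi_m|_Y$ is a nonzero section of the line bundle $(mC)|_Y$; hence $\ahss_{X|Y}(m\overline{D};mE+nY)\geq\ahss_{X|Y}(m\overline{A};nY)$. Next, applying Lemma~\ref{lem:filtration_est} to the $Y$-order filtration of $H^0(m(D-E))$ endowed with the norms from $m\overline{D}$, and compensating the $\log(\lceil mr\rceil+1)$-rescaling via Remark~\ref{rem:Yuan_rescale}, I obtain
\[
\ah(m\overline{D};mE)-\ah(m\overline{D};mE+mrY)\geq\sum_{n=0}^{\lceil mr\rceil-1}\ahss_{X|Y}(m\overline{D};mE+nY)-[K:\QQ]I(D)m^{\dim X}\log m-O(m^{\dim X}).
\]

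To link this lower bound to the flag terms for $\overline{A}$ appearing in the theorem, I convert each $\ahss_{X|Y}(m\overline{A};nY)$ into $\ahss_{\quot(X|Y)}(m\overline{A};nY)$ using Proposition~\ref{prop: Estimates II inclusions} together with Remark~\ref{rem:Yuan_rescale}, at a cost bounded by $(\varepsilon m+\log 3)\rk_\QQ H^0_{X|Y}(mA-nY)$; the $\mathscr{M}$-domination gives $\rk_\QQ H^0_{X|Y}(mA-nY)\leq[K:\QQ]I(A|_Y+\rho_0\mathscr{M}|_Y)m^{\dim Y}$ for $n\leq\rho_0 m$, so summation over $n\in\{0,\dots,\lceil mr\rceil-1\}$ produces an $O(\varepsilon r m^{\dim X+1})$-term that accounts for the $\varepsilon[K:\QQ]I(A|_Y+\rho_0\mathscr{M}|_Y)$-summand in $\widetilde C$. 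Finally, the upper inequality in the chain~\eqref{eqn: Estimates II Step 1} of the proof of Theorem~\ref{thm:Estimates II} (applied to $\overline{A}$) bounds
\[
\left[\#\bm{w}_{\mathscr{F}_{\sbullet}}(\widehat{\Gamma}^{\rm ss}(m\overline{A})\setminus\{0\})-\#\bm{w}_{\mathscr{F}_{\sbullet}}(\widehat{\Gamma}^{\rm ss}(m\overline{A};mrY)\setminus\{0\})\right]\log p
\]
from above by $\sum_n\ahss_{\quot(X|Y)}(m\overline{A};nY)+\frac{\widetilde C_0(\overline{M}|_Y,Y,\varepsilon)}{\log p}m^{\dim X}(mr+1)$; subtracting this from the lower bound assembled in the previous step yields the asserted inequality, with the constant $\widetilde C$ absorbing both the $\varepsilon$-term and the $\widetilde C_0/\log p$-term.

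The main obstacle is the very first step, namely the construction of the multiplier $\phi_m$ so that the entire chain of inequalities goes through. The $Y$-bigness hypothesis on $(\overline{D}-\overline{A};E)$ is exactly what makes this possible: it permits the decomposition $\overline{D}-\overline{A}=\overline{C}+(\overline{D}-\overline{A}-\overline{C})$ with $\overline{C}$ w-ample (an abundant source of $Y$-nonvanishing strictly small sections) and with $(\overline{D}-\overline{A}-\overline{C};E)>_Y 0$ (so that these sections of $\overline{C}$ survive as strictly small sections of $(\overline{D}-\overline{A};E)$ without losing their restriction to $Y$). Once this multiplier is in place, the remainder of the proof is an essentially mechanical adaptation of Step~2 of the proof of Theorem~\ref{thm:Estimates II}, with $\overline{D}$ in place of $\overline{A}$ in the sections but $\overline{A}$ retained in the flag terms.
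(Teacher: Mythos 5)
Your proposal matches the paper's argument: the author's proof of Theorem~\ref{thm: general lower bound} likewise combines (i) the existence of a nonzero element of $\widehat{\Gamma}^{\rm s}_{X|Y}\left(m\overline{D}-m\overline{A};mE\right)$ for $m\gg 0$ (the multiplier), (ii) Lemma~\ref{lem:filtration_est} applied to the $Y$-order filtration of $H^0(m(D-E))$ together with the $\log(\lceil mr\rceil)$-rescaling correction from Remark~\ref{rem:Yuan_rescale}, (iii) the inclusions from Proposition~\ref{prop: Estimates II inclusions} to pass from $\ahss_{X|Y}$ to $\ahss_{\quot(X|Y)}$, and (iv) the upper half of chain~\eqref{eqn: Estimates II Step 1} for $\overline{A}$ to reintroduce the flag-counting terms. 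The only difference is cosmetic: the paper simply asserts that $\widehat{\Gamma}^{\rm s}_{X|Y}\left(m\overline{D}-m\overline{A};mE\right)\neq\{0\}$ eventually (an immediate consequence of $Y$-bigness), while you spell out the standard construction via a w-ample $\overline{C}$ with $\left(\overline{D}-\overline{A}-\overline{C};E\right)>_Y 0$ and a strictly small section of $m\overline{C}$ restricting nontrivially to $Y$, which is a correct expansion of the same step.
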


\begin{proof}
Let $\overline{M}\coloneqq \overline{\mathscr{M}}^{\rm ad}$.
Given any $\varepsilon\in\RR_{>0}$, there exists a $\mu'(\varepsilon)\in\ZZ_{\geq 1}$, which depends on $\left(\overline{D};E\right)$, $\overline{A}$, $Y$, $X$, and $\varepsilon$, such that
\begin{align}
&\widehat{\Gamma}^{\rm ss}_{\quot(X|Y)}\left(m\overline{A};nY\right)\subset\widehat{\Gamma}^{\rm ss}_{X|Y}\left(m(\overline{A}(\varepsilon));nY\right), \\
&\widehat{\Gamma}^{\rm ss}_{\quot(X|Y)}\left(m(\overline{A}(-\varepsilon));nY\right)\subset\widehat{\Gamma}^{\rm ss}_{X|Y}\left(m\overline{A};nY\right),
\end{align}
and
\begin{equation}
\widehat{\Gamma}^{\rm s}_{X|Y}\left(m\overline{D}-m\overline{A};mE\right)\neq\{0\}
\end{equation}
for every $m,n\in\ZZ_{\geq 1}$ with $n/m\leq\rho_0$ and $m\geq\mu'(\varepsilon)$.
Let $\lambda(\varepsilon,p)$ be as in the proof of Theorem~\ref{thm:Estimates II}, and set
\[
\mu(\varepsilon,p)\coloneqq \max\{\lambda(\varepsilon,p),\mu'(\varepsilon)\}.
\]

By the same arguments as in the proof of Theorem~\ref{thm:Estimates II}, we obtain
\begin{align*}
&\ahss\left(m\overline{D};mE\right)-\ahss\left(m\overline{D};mE+mrY\right) \\
&\qquad \geq\ahss\left((m\overline{D})(\log(\lceil mr\rceil));mE\right)-\ahss\left(m\overline{D};mE+mrY\right) \\
&\qquad\qquad\qquad -\log(3\lceil mr\rceil)\rk_{\QQ} H^0(mD) \\
&\qquad \geq\sum_{n=0}^{\lceil mr\rceil-1}\ahss_{X|Y}\left(m\overline{D};mE+nY\right)-\log(3\lceil mr\rceil)\rk_{\QQ} H^0(mD) \\
&\qquad \geq\sum_{n=0}^{\lceil mr\rceil-1}\ahss_{X|Y}\left(m\overline{A};nY\right)-\log(3\lceil mr\rceil)\rk_{\QQ}H^0(mD) \\
&\qquad \geq\sum_{n=0}^{\lceil mr\rceil-1}\ahss_{\quot(X|Y)}\left(m\overline{A};nY\right)-(\varepsilon m+\log(3)) \\
&\qquad\qquad\qquad \times\sum_{n=0}^{\lceil mr\rceil-1}\rk_{\QQ} H^0_{X|Y}(mA-nY)-\log(3\lceil mr\rceil)\rk_{\QQ} H^0(mD)
\end{align*}
and
\begin{align*}
&\#\bm{w}_{\mathscr{F}_{\sbullet}}\left(\widehat{\Gamma}^{\rm ss}\left(m\overline{A}\right)\setminus\{0\}\right)\log(p)-\#\bm{w}_{\mathscr{F}_{\sbullet}}\left(\widehat{\Gamma}^{\rm ss}\left(m\overline{A};mrY\right)\setminus\{0\}\right)\log(p) \\
&\qquad \leq\sum_{n=0}^{\lceil mr\rceil-1}\#\bm{w}_{\mathscr{F}_{\geq 1}}\left(\widehat{\Gamma}_{\quot(X|Y)}^{\rm ss}\left(m\overline{A};nY\right)\setminus\{0\}\right)\log(p) \\
&\qquad \leq\sum_{n=0}^{\lceil mr\rceil-1}\ahss_{\quot(X|Y)}\left(m\overline{A};nY\right)+\frac{\widetilde{C}_0\left(\overline{M}|_Y,Y,\varepsilon\right)}{\log(p)}m^{\dim X}(mr+1).
\end{align*}
Hence,
\begin{align*}
&\ahss\left(m\overline{D};mE\right)-\ahss\left(m\overline{D};mE+mrY\right)-\#\bm{w}_{\mathscr{F}_{\sbullet}}\left(\widehat{\Gamma}^{\rm ss}\left(m\overline{A}\right)\setminus\{0\}\right)\log(p) \\
&\quad\qquad\qquad\qquad\qquad +\#\bm{w}_{\mathscr{F}_{\sbullet}}\left(\widehat{\Gamma}^{\rm ss}\left(m\overline{A};mrY\right)\setminus\{0\}\right)\log(p) \\
&\quad \geq -r\widetilde{C}\left(\overline{M}|_Y,A|_Y+\rho_0M|_Y,Y,p,\varepsilon\right)m^{\dim X+1}-[K:\QQ]I(D)m^{\dim X}\log(m) \\
&\quad\qquad\qquad\qquad\qquad -Sm^{\dim X}
\end{align*}
for any $m\geq\mu(\varepsilon,p)$, where we set
\begin{align*}
&S\coloneqq \frac{\widetilde{C}_0\left(\overline{M}|_Y,Y,1\right)}{\log(2)}+(1+(\rho_0+1)\log(3))[K_Y:\QQ]I(A|_Y+\rho_0M|_Y) \\
&\qquad\qquad\qquad\qquad\qquad\qquad\qquad +\log(3(\rho_0+1))[K:\QQ]I(D).
\end{align*}
\end{proof}

\begin{theorem}\label{thm:FE3}
Let $X$ be a normal, projective, and geometrically connected $K$-variety, let $Y$ be a prime Cartier divisor on $X$, and let $\left(\overline{D};E\right)\in\aDDiv_{\QQ,\QQ}(X)$ be a $Y$-big pair on $X$.
Let $\overline{A}\in\aDiv_{\QQ}(X)$ be any w-ample adelic $\QQ$-Cartier divisor on $X$ such that $\left(\overline{D}-\overline{A};E\right)$ is $Y$-big.
If $\ord_Y(E)>0$, then
\[
\lim_{r\downarrow 0}\frac{\avol\left(\overline{D};E\right)-\avol\left(\overline{D};E+rY\right)}{r}\geq (\dim X+1)\avolq{X|Y}\left(\overline{A}\right).
\]
\end{theorem}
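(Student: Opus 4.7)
The plan is to combine Theorems~\ref{thm: general lower bound} and \ref{thm:Estimates II} so that the Newton--Okounkov counting contributions cancel, and then to evaluate the resulting lower bound via a Riemann-sum argument together with the continuity of the arithmetic restricted volume.

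First, by the homogeneity of $\avol$ (Corollary~\ref{cor:Yuan_main}\eqref{item: cor Yuan main homogeneity}) and of $\avolq{X|Y}$, I reduce to the case $\left(\overline{D};E\right)\in\aDDiv_{\ZZ,\ZZ}(X)$ with $E\geq 0$ and $\overline{A}\in\aDiv_{\ZZ}(X)$; I then fix a model $(\mathscr{X},\mathscr{Y},\overline{\mathscr{M}})$ and a flag $\mathscr{F}_{\sbullet}$ as in Definition~\ref{defn: Estimates II prelim}, with $\mathscr{F}_{\geq 1}$ a good flag on $\mathscr{Y}$ over a prime $p$ (to be chosen large later) and a parameter $\varepsilon\in(0,1]$ (to be sent to $0$ at the end). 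Applying Theorem~\ref{thm: general lower bound} to $\left(\overline{D};E\right)$ against $\overline{A}$ and the lower half of Theorem~\ref{thm:Estimates II} to $\overline{A}$ alone, the common Newton--Okounkov difference
\[
\#\bm{w}_{\mathscr{F}_{\sbullet}}\!\left(\widehat{\Gamma}^{\rm ss}(m\overline{A})\setminus\{0\}\right)\log(p)-\#\bm{w}_{\mathscr{F}_{\sbullet}}\!\left(\widehat{\Gamma}^{\rm ss}(m\overline{A};mrY)\setminus\{0\}\right)\log(p)
\]
appears in both estimates with opposite signs and cancels upon addition, yielding
\[
\ah\!\left(m\overline{D};mE\right)-\ah\!\left(m\overline{D};mE+mrY\right)\geq \ah\!\left(m\overline{A}\right)-\ah\!\left(m\overline{A};mrY\right)-2r\widetilde{C}\,m^{\dim X+1}-O\!\left(m^{\dim X}\log m\right)
\]
for $0<r\leq\rho_0$ and $m$ sufficiently large.

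Dividing by $m^{\dim X+1}/(\dim X+1)!$ and passing to the limit in $m$ (the volumes on both sides are honest limits on the $Y$-big locus, by Corollary~\ref{cor:Yuan_main}), then dividing by $r$ and sending $r\downarrow 0$, the problem is reduced to
\[
\lim_{r\downarrow 0}\frac{\avol\!\left(\overline{A}\right)-\avol\!\left(\overline{A};rY\right)}{r}\;\geq\;(\dim X+1)\avolq{X|Y}\!\left(\overline{A}\right)-O(\varepsilon).
\]
For this I reuse Step~2 of the proof of Theorem~\ref{thm:Estimates II}, which, applied to $\overline{A}$ itself, already produces
\[
\ah\!\left(m\overline{A}\right)-\ah\!\left(m\overline{A};mrY\right)\;\geq\;\sum_{n=0}^{\lceil mr\rceil-1}\ahss_{\quot(X|Y)}\!\left(m(\overline{A}(-\varepsilon));nY\right)-O\!\left(\varepsilon r\,m^{\dim X+1}\right)-O\!\left(m^{\dim X}\log m\right).
\]
After dividing by $m^{\dim X+1}/(\dim X+1)!$ the right-hand sum becomes a Riemann sum of step $1/m$ in the variable $s=n/m\in[0,r]$, whose integrand is recognised, via the defining limit of $\avolq{X|Y}$ in Definition~\ref{defn:arithmetic_restricted_vol1}, as $\avolq{X|Y}\!\left(\overline{A}(-\varepsilon);sY\right)$. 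Taking $m\to\infty$, dividing by $r$, and sending $r\downarrow 0$, the continuity of $\avolq{X|Y}$ along both $\overline{A}$- and $Y$-directions (Corollary~\ref{cor:Yuan_main}(4)) yields $(\dim X+1)\avolq{X|Y}\!\left(\overline{A}\right)$ up to an $O(\varepsilon)$-term. Finally, $\widetilde{C}$ can be made arbitrarily small by first choosing $p$ large (its leading contribution carries a factor $1/\log p$) and then sending $\varepsilon\downarrow 0$, which concludes the argument.

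The main technical obstacle will be the uniform control of the Riemann-sum approximation: one must show that the pointwise convergence $(\dim X)!\,m^{-\dim X}\ahss_{\quot(X|Y)}\!\left(m\overline{A};\lfloor sm\rfloor Y\right)\to\avolq{X|Y}\!\left(\overline{A};sY\right)$ is sufficiently uniform in $s\in[0,\rho_0]$ that the sum $m^{-1}\sum_{n}\ahss_{\quot(X|Y)}\!\left(m\overline{A};nY\right)$ converges to the integral of $s\mapsto\avolq{X|Y}\!\left(\overline{A};sY\right)$. This is exactly what the Yuan--Moriwaki estimates of subsection~\ref{subsec:Yuan} together with Corollary~\ref{cor:Yuan_main}(4) are designed to provide. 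A secondary technicality is the interchange of the $\limsup$ defining $\avol$ with the limit in $m$; this is legitimate since each pair in question is $Y$-big (or big), so that the $\limsup$ is in fact a limit.
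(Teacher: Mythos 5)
Your proposal differs from the paper's proof in two substantive ways, one of which creates a real gap.

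First, a structural remark. The paper never performs the cancellation between Theorems~\ref{thm:Estimates II} and~\ref{thm: general lower bound} that you propose; it works directly from Theorem~\ref{thm: general lower bound} (taking $m\to\infty$ to relate the left side to $\avol$ and the Newton--Okounkov counting terms to $\vol\bigl(\widehat{\Delta}^{\mathscr{F}_\sbullet}_{\rm YM}(\overline{A})\bigr)-\vol\bigl(\widehat{\Delta}^{\mathscr{F}_\sbullet}_{\rm YM}(\overline{A};rY)\bigr)$ via Claim~\ref{clm:FE3_bounded}), and then applies the \emph{slicing} inequality of Claim~\ref{clm:FE3_slice}. Your cancellation trick is a legitimate alternative, but two of your phrasings are wrong: the Newton--Okounkov counting difference enters Theorems~\ref{thm:Estimates II} and~\ref{thm: general lower bound} with the \emph{same} sign (not ``opposite signs''), and to eliminate it from the inequality you want you must invoke the \emph{upper} half of Theorem~\ref{thm:Estimates II}, not the ``lower half'' --- you need a lower bound on $W_1-W_2$ in terms of $\ah(m\overline{A})-\ah(m\overline{A};mrY)$, which is what the upper estimate provides after rearrangement.

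Second, and more seriously, the Riemann-sum step is a genuine gap. You assert that
\[
\frac{1}{m^{\dim X+1}}\sum_{n=0}^{\lceil mr\rceil-1}\ahss_{\quot(X|Y)}\bigl(m(\overline{A}(-\varepsilon));nY\bigr)\longrightarrow\int_0^r\frac{\avolq{X|Y}\bigl(\overline{A}(-\varepsilon);sY\bigr)}{(\dim X)!}\,ds,
\]
justified by ``the Yuan--Moriwaki estimates together with Corollary~\ref{cor:Yuan_main}(4).'' Neither of these gives what you need. Theorem~\ref{thm:YuanMoriwaki} compares $\ahss_{?}$ to the lattice-point count of an Okounkov-type set for each fixed $m$; Corollary~\ref{cor:Yuan_main}(4) gives continuity of the limit $\avolq{X|Y}$ in the divisor data. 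Neither says that the $(m,n)$-indexed double sequence $\ahss_{\quot(X|Y)}\bigl(m(\overline{A}(-\varepsilon));nY\bigr)/m^{\dim X}$ converges to its ``diagonal'' limit $\avolq{X|Y}\bigl(\overline{A}(-\varepsilon);sY\bigr)/(\dim X)!$ uniformly in $s=n/m$; note in particular that for fixed rational $s=a/b$ the defining limit for $\avolq{X|Y}$ is taken over multiples of $b$, not over arbitrary $m$, so even pointwise convergence along the full sequence is not automatic. The paper sidesteps all of this by observing that the sum over $n$ is \emph{exactly} the lattice-point count of a slab $\{-\ord_Y(A)\leq w_1< r-\ord_Y(A)\}$ of the $(\dim X+1)$-dimensional Newton--Okounkov body $\widehat{\Delta}^{\mathscr{F}_\sbullet}_{\rm YM}(\overline{A})$, whose limit exists and equals a volume by the convexity machinery of Claim~\ref{clm:FE3_bounded} (invoking \cite{Lazarsfeld_Mustata08} and \cite{BoucksomBourbaki}); the derivative of this slab volume at $r=0$ is then a one-dimensional-slice volume by Claim~\ref{clm:FE3_slice}, and that slice is compared to $\widehat{\Delta}^{\mathscr{F}_{\geq 1}}_{CL(X|Y)}\bigl(\overline{A}(-\varepsilon)\bigr)$ via Proposition~\ref{prop: Estimates II inclusions}. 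That slicing argument --- convexity of the global body and identification of the slice with the restricted body --- is the technical core, and it is precisely the piece missing from your proposal. Without it, the claimed Riemann-sum limit is unjustified.
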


\begin{proof}
By homogeneity (see \cite[Corollary~3.25]{IkomaCon} and Corollary~\ref{cor:Yuan_main} \eqref{item: cor Yuan main homogeneity}), one can assume that $\left(\overline{D};E\right)\in\aDDiv_{\ZZ,\ZZ}(X)$ and $\overline{A}\in\aDiv_{\ZZ}(X)$.
For the $\overline{A}$, one can choose a $\rho_0$ as in Proposition~\ref{prop: Estimates II inclusions}.
There exists a normal and projective $O_K$-model $\mathscr{X}$ such that the Zariski closure $\mathscr{Y}$ of $Y$ in $\mathscr{X}$ is Cartier and such that there exists a $\mathscr{Y}$-effective arithmetic Cartier divisor $\overline{\mathscr{M}}$ on $\mathscr{X}$ such that $H^0_{X|Y}(\mathscr{M}|_X+Y)\neq\{0\}$ and such that
\[
\widehat{\Gamma}^{\rm f}_{X|Y}\left(m\overline{A};mrY\right)\subset H^0\left(\left.\mathcal{O}_{\mathscr{X}}\left(m\mathscr{M}\right)\right|_{\mathscr{Y}}\right)
\]
and
\[
\widehat{\Gamma}^{\rm ss}_{X|Y}\left(m\overline{A};mrY\right)\subset\widehat{\Gamma}^{\rm s}\left(\left.\mathcal{O}_{\mathscr{X}}\left(m\overline{\mathscr{M}}\right)\right|_{\mathscr{Y}}\right)
\]
hold for every $m\in\ZZ_{\geq 0}$ and $r\in\RR$ with $0\leq r\leq\rho_0$.
Let $\overline{M}\coloneqq \overline{\mathscr{M}}^{\rm ad}$.

Fix any $\varepsilon\in\RR$ with $0<\varepsilon\leq 1$ such that $\overline{A}(-\varepsilon)$ is also w-ample.
Let $p$ be any prime number satisfying
\[
p\geq\max\left\{3,4^{\delta\left(\overline{M}|_Y\right)/\varepsilon}\right\},
\]
and let
\[
\mathscr{F}_{\sbullet}\colon \mathscr{X}\supset\mathscr{Y}\supset\mathscr{F}_2\supset\dots\supset\mathscr{F}_{\dim X+1}=\{\xi\}
\]
be a flag on $\mathscr{X}$ such that $\mathscr{F}_{\geq 1}$ is a good flag on $\mathscr{Y}$ over $p$.
Then
\begin{equation}\label{eqn: FE3 bound of C prime}
\frac{C'\left(\overline{M}|_Y,Y\right)}{\log(p)}\leq\varepsilon[K_Y:\QQ]I(M|_Y)
\end{equation}
and
\begin{equation}\label{eqn: FE3 bound of C}
\widetilde{C}\left(\overline{M}|_Y,A|_Y+\rho_0M|_Y,Y,p,\varepsilon\right)\leq\varepsilon[K_Y:\QQ]\left(2I(M|_Y)+I(A|_Y+\rho_0M|_Y)\right).
\end{equation}
Let $\bm{w}_{\mathscr{F}_{\sbullet}}$ be the valuation map attached to $\mathscr{F}_{\sbullet}$, and consider the convex body
\[
\widehat{\Delta}^{\mathscr{F}_{\sbullet}}_{\rm YM}\left(\overline{A};rY\right)\coloneqq \overline{\left(\bigcup_{m\in\ZZ_{\geq 1}}\frac{1}{m}\bm{w}_{\mathscr{F}_{\sbullet}}\left(\widehat{\Gamma}^{\rm ss}\left(m\overline{A};mrY\right)\setminus\{0\}\right)\right)}
\]
for each $r\in\QQ$ with $0\leq r\leq \rho_0$ (see Definition~\ref{defn:Yuan_Moriwaki_Newton_Okounkov}).

\begin{claim}\label{clm:FE3_slice}
One has
\begin{align*}
&\lim_{r\downarrow 0}\frac{\vol_{\RR^{\dim X+1}}\left(\widehat{\Delta}^{\mathscr{F}_{\sbullet}}_{\rm YM}\left(\overline{A}\right)\right)-\vol_{\RR^{\dim X+1}}\left(\widehat{\Delta}^{\mathscr{F}_{\sbullet}}_{\rm YM}\left(\overline{A};rY\right)\right)}{r} \\
&\qquad\qquad\qquad\qquad\qquad\qquad\qquad\qquad \geq\vol_{\RR^{\dim X}}\left(\widehat{\Delta}^{\mathscr{F}_{\geq 1}}_{CL(X|Y)}\left(\overline{A}(-\varepsilon)\right)\right).
\end{align*}
\end{claim}

\begin{proof}[Proof of Claim~\ref{clm:FE3_slice}]
Obviously, one has
\begin{align*}
&\lim_{r\downarrow 0}\frac{\vol_{\RR^{\dim X+1}}\left(\widehat{\Delta}^{\mathscr{F}_{\sbullet}}_{\rm YM}\left(\overline{A}\right)\right)-\vol_{\RR^{\dim X+1}}\left(\widehat{\Delta}^{\mathscr{F}_{\sbullet}}_{\rm YM}\left(\overline{A};rY\right)\right)}{r} \\
&\qquad\qquad\qquad\qquad\qquad\qquad \geq\vol_{\RR^{\dim X}}\left(\widehat{\Delta}^{\mathscr{F}_{\sbullet}}_{\rm YM}\left(\overline{A}\right)\cap\{w_1=-\ord_Y(A)\}\right),
\end{align*}
where the right-hand side denotes the Euclidean volume of the slice of $\widehat{\Delta}^{\mathscr{F}_{\sbullet}}_{\rm YM}\left(\overline{A}\right)$ by the hyperplane $\{w_1=-\ord_Y(A)\}$.
By Proposition~\ref{prop: Estimates II inclusions}, one has
\begin{align*}
\vol_{\RR^{\dim X}}\left(\widehat{\Delta}^{\mathscr{F}_{\sbullet}}_{\rm YM}\left(\overline{A}\right)\cap\{w_1=-\ord_Y(A)\}\right)\geq\vol_{\RR^{\dim X}}\left(\widehat{\Delta}^{\mathscr{F}_{\geq 1}}_{CL(X|Y)}\left(\overline{A}(-\varepsilon)\right)\right)
\end{align*}
as required (see also \cite[Appendix]{Lazarsfeld_Mustata08}).
\end{proof}

By taking $m\to\infty$ in Theorem~\ref{thm: general lower bound}, one has
\begin{align*}
&\frac{\avol\left(\overline{D};E\right)-\avol\left(\overline{D};E+rY\right)}{r} \\
&\quad \geq (\dim X+1)!\frac{\vol_{\RR^{\dim X+1}}\left(\widehat{\Delta}^{\mathscr{F}_{\sbullet}}_{\rm YM}\left(\overline{A}\right)\right)-\vol_{\RR^{\dim X+1}}\left(\widehat{\Delta}^{\mathscr{F}_{\sbullet}}_{\rm YM}\left(\overline{A};rY\right)\right)}{r}\log(p) \\
&\quad\qquad\qquad\qquad -\varepsilon[K_Y:\QQ](\dim X+1)!\left(2I(M|_Y)+I(A|_Y+\rho_0M|_Y)\right)
\end{align*}
for any $r\in\RR$ with $0<r\leq\rho_0$ (see Lemma~\ref{lem:FE3_bounded} and \eqref{eqn: FE3 bound of C}).
Hence, by taking $r\downarrow 0$,
\begin{align*}
&\lim_{r\downarrow 0}\frac{\avol\left(\overline{D};E\right)-\avol\left(\overline{D};E+rY\right)}{r} \\
&\qquad \geq (\dim X+1)!\vol_{\RR^{\dim X}}\left(\widehat{\Delta}^{\mathscr{F}_{\geq 1}}_{CL(X|Y)}\left(\overline{A}(-\varepsilon)\right)\right)\log(p) \\
&\quad\qquad\qquad\qquad -\varepsilon[K_Y:\QQ](\dim X+1)!\left(2I(M|_Y)+I(A|_Y+\rho_0M|_Y)\right) \\
&\qquad \geq (\dim X+1)\avolq{X|Y}\left(\overline{A}(-\varepsilon)\right) \\
&\quad\qquad\qquad\qquad -\varepsilon[K_Y:\QQ](\dim X+1)!\left(3I(M|_Y)+I(A|_Y+\rho_0M|_Y)\right)
\end{align*}
(see Claim~\ref{clm:FE3_slice}, Corollary~\ref{cor:YuanMoriwaki}, and \eqref{eqn: FE3 bound of C prime}), which leads to the required estimate as $\varepsilon\downarrow 0$.
\end{proof}

\begin{proof}[Proof of Theorem~A]
Assume $\left(\overline{D};E\right)\in\aDDiv_{\QQ,\QQ}(X)$.
By Corollary~\ref{cor:upper_bound}, Theorem~\ref{thm: general lower bound}, and \cite[Proposition~8.1]{IkomaRem}, one has
\begin{align*}
&(\dim X+1)\adeg\left(\left(\overline{M}|_{\pi_*^{-1}(Y)}\right)^{\cdot\dim X}\right) \\
&\qquad \leq\lim_{r\downarrow 0}\frac{\avol\left(\overline{D};E\right)-\avol\left(\overline{D};E+rY\right)}{r}\leq(\dim X+1)\left.\left\langle\left(\overline{D};E\right)^{\cdot\dim X}\right\rangle\right|_Y
\end{align*}
for any $(\pi,\overline{M})\in\widehat{\Theta}^{\rm rw}_Y\left(\overline{D};E\right)$.
Hence, by Remark~\ref{rem:arpin} \eqref{item: rw approximation arpin},
\[
\lim_{r\downarrow 0}\frac{\avol\left(\overline{D};E\right)-\avol\left(\overline{D};E+rY\right)}{r}=(\dim X+1)\left.\left\langle\left(\overline{D};E\right)^{\cdot\dim X}\right\rangle\right|_Y
\]
for every $Y$-big pair $\left(\overline{D};E\right)\in\aDDiv_{\QQ,\QQ}(X)$.
Consider a finite dimensional and $\QQ$-rational $\RR$-vector subspace of $\aDDiv_{\RR,\RR}(X)$ containing both $\left(\overline{D};E\right)$ and $(0;Y)$.
Since the function $\left(\overline{D}';E'\right)\mapsto\avol\left(\overline{D}';E'\right)^{1/(\dim X+1)}$ is concave (see \cite[Theorem~2.24(3)]{IkomaDiff1}) and the function
\[
\avol\left(\overline{D}';E'\right)^{-\dim X/(\dim X+1)}\cdot\left.\left\langle\left(\overline{D}';E'\right)^{\cdot\dim X}\right\rangle\right|_Y
\]
is locally Lipschitz-continuous (see Remark~\ref{rem:arpin} \eqref{item: remark arpin}), one obtains the theorem by Lemma~\ref{lem: fund property of concave function}.
\end{proof}

%%%
\section*{Acknowledgement}
\addcontentsline{toc}{section}{Acknowledgement}

The author is grateful to Professors Namikawa, Yoshikawa, and Moriwaki, and to Kyoto University for their supports.
This work was partially supported by Institut de Math\'ematiques de Jussieu -- Paris Rive Gauche, where part of this work was performed.
The author is grateful to Professors Merel and Chen for their warm welcome during the stay.

The author's work was supported by Japan Society for the Promotion of Science KAKENHI grant JP16K17559, JP20K03548, and partially JP16H06335.

%%%
\addcontentsline{toc}{section}{References}
\bibliography{ikoma}

\def\cprime{$'$}
\begin{thebibliography}{10}

\bibitem{Berman_Boucksom}
Robert Berman and S{\'e}bastien Boucksom.
\newblock Growth of balls of holomorphic sections and energy at equilibrium.
\newblock {\em Inventiones Mathematicae}, 181(2):337--394, 2010.

\bibitem{Berman_Demailly}
Robert Berman and Jean-Pierre Demailly.
\newblock Regularity of plurisubharmonic upper envelopes in big cohomology
  classes.
\newblock In {\em Perspectives in analysis, geometry, and topology}, volume 296
  of {\em Progr. Math.}, pages 39--66. Birkh\"{a}user/Springer, New York, 2012.

\bibitem{Berman09}
Robert~J. Berman.
\newblock Bergman kernels and equilibrium measures for line bundles over
  projective manifolds.
\newblock {\em Amer. J. Math.}, 131(5):1485--1524, 2009.

\bibitem{BoucksomBourbaki}
S\'ebastien Boucksom.
\newblock Corps d'{O}kounkov (d'apr\`es {O}kounkov, {L}azarsfeld-{M}usta\c t\v
  a et {K}aveh-{K}hovanskii).
\newblock {\em Ast\'erisque}, 361:Exp. No. 1059, vii, 1--41, 2014.

\bibitem{Chen11}
Huayi Chen.
\newblock Differentiability of the arithmetic volume function.
\newblock {\em J. Lond. Math. Soc. (2)}, 84(2):365--384, 2011.

\bibitem{Ein_Laz_Mus_Nak_Pop06}
Lawrence Ein, Robert Lazarsfeld, Mircea Musta{\c{t}}{\u{a}}, Michael Nakamaye,
  and Mihnea Popa.
\newblock Restricted volumes and base loci of linear series.
\newblock {\em Amer. J. Math.}, 131(3):607--651, 2009.

\bibitem{GruberBook}
Peter~M. Gruber.
\newblock {\em Convex and discrete geometry}, volume 336 of {\em Grundlehren
  der Mathematischen Wissenschaften [Fundamental Principles of Mathematical
  Sciences]}.
\newblock Springer, Berlin, 2007.

\bibitem{Hironaka64}
Heisuke Hironaka.
\newblock Resolution of singularities of an algebraic variety over a field of
  characteristic zero. {I}, {II}.
\newblock {\em Ann. of Math. (2) 79 (1964), 109--203; ibid. (2)}, 79:205--326,
  1964.

\bibitem{IkomaCon}
Hideaki Ikoma.
\newblock On the concavity of the arithmetic volumes.
\newblock {\em Int. Math. Res. Not. IMRN}, 2015(16):7063--7109, 2015.

\bibitem{IkomaRem}
Hideaki Ikoma.
\newblock Remarks on arithmetic restricted volumes and arithmetic base loci.
\newblock {\em Publ. Res. Inst. Math. Sci.}, 52(4):435--495, 2016.

\bibitem{IkomaDiff1}
Hideaki Ikoma.
\newblock Adelic {C}artier divisors with base conditions and the
  {B}onnesen--{D}iskant-type inequalities.
\newblock {\em Tohoku Math. J. (2)}, 73(3):341--401, 2021.

\bibitem{IkomaCont}
Hideaki Ikoma.
\newblock Adelic {C}artier divisors with base conditions and the continuity of
  volumes.
\newblock {\em Kyoto J. Math.}, 61(4):905--947, 2021.

\bibitem{LazarsfeldI}
Robert Lazarsfeld.
\newblock {\em Positivity in algebraic geometry. {I}}, volume~48 of {\em
  Ergebnisse der Mathematik und ihrer Grenzgebiete. 3. Folge. A Series of
  Modern Surveys in Mathematics}.
\newblock Springer-Verlag, Berlin, 2004.
\newblock Classical setting: line bundles and linear series.

\bibitem{Lazarsfeld_Mustata08}
Robert Lazarsfeld and Mircea Musta{\c{t}}{\u{a}}.
\newblock Convex bodies associated to linear series.
\newblock {\em Annales Scientifiques de l'\'Ecole Normale Sup\'erieure.
  Quatri\`eme S\'erie}, 42(5):783--835, 2009.

\bibitem{MoriwakiCont}
Atsushi Moriwaki.
\newblock Continuity of volumes on arithmetic varieties.
\newblock {\em J. Algebraic Geom.}, 18(3):407--457, 2009.

\bibitem{MoriwakiEst}
Atsushi Moriwaki.
\newblock Estimation of arithmetic linear series.
\newblock {\em Kyoto J. Math.}, 50(4):685--725, 2010.

\bibitem{MoriwakiZar}
Atsushi Moriwaki.
\newblock Zariski decompositions on arithmetic surfaces.
\newblock {\em Publ. Res. Inst. Math. Sci.}, 48(4):799--898, 2012.

\bibitem{MoriwakiAdelic}
Atsushi Moriwaki.
\newblock Adelic divisors on arithmetic varieties.
\newblock {\em Mem. Amer. Math. Soc.}, 242(1144):v+122, 2016.

\bibitem{SilvermanADS}
Joseph~H. Silverman.
\newblock {\em The arithmetic of dynamical systems}, volume 241 of {\em
  Graduate Texts in Mathematics}.
\newblock Springer, New York, 2007.

\bibitem{Yuan07}
Xinyi Yuan.
\newblock Big line bundles over arithmetic varieties.
\newblock {\em Invent. Math.}, 173(3):603--649, 2008.

\bibitem{Yuan09}
Xinyi Yuan.
\newblock On volumes of arithmetic line bundles.
\newblock {\em Compositio Mathematica}, 145(6):1447--1464, 2009.

\bibitem{Yuan12}
Xinyi Yuan.
\newblock Volumes of arithmetic {O}kounkov bodies.
\newblock {\em Math. Z.}, 280(3-4):1075--1084, 2015.

\bibitem{ZhangSurf}
Shouwu Zhang.
\newblock Positive line bundles on arithmetic surfaces.
\newblock {\em Ann. of Math. (2)}, 136(3):569--587, 1992.

\end{thebibliography}
\bibliographystyle{plain}

\end{document}